\newcommand{\pr}{\rightarrow}
\newcommand{\ba}{\begin{array}}
\newcommand{\ea}{\end{array}}
\newcommand{\varp}{\varphi}
\newcommand{\eps}{\varepsilon}
\newenvironment{inspring}[1]%
{\begin{list}{}{\setlength{\rightmargin}{0cm}
                \setlength{\listparindent}{0cm}
                \settowidth{\labelwidth}{\mbox{#1}}
                \setlength{\leftmargin}{1.1\labelwidth}
                \setlength{\labelsep}{.1\labelwidth}}}%
{\end{list}}
\newcommand{\ITEM}[1]{\item[#1\hfill]}
\newcommand{\bi}[1]{\begin{inspring}{#1}}
\newcommand{\ei}{\end{inspring}}
\newcommand{\dsum}{\displaystyle \sum}
\newcommand{\beq}{\begin{equation}}
\newcommand{\eq}{\end{equation}}
\font\tenmsa=msam10 \font\sevenmsa=msam7 \font\fivemsa=msam5
\font\tenmsb=msbm10 \font\sevenmsb=msbm7 \font\fivemsb=msbm5
\def\Bbb{\ifmmode\let\next\Bbb@\else
 \def\next{\errmessage{Use \string\Bbb\space only in math mode}}\fi\next}
\def\Bbb@#1{{\Bbb@@{#1}}}
\def\Bbb@@#1{\fam\msbfam#1}
\newcommand{\dR}{{\Bbb R}}
\numberwithin{equation}{section}
\definecolor{MyDarkBlue}{rgb}{0,0.08,0.50}
\definecolor{BrickRed}{rgb}{0.65,0.08,0}
\numberwithin{equation}{section}
\renewcommand{\geq}{\geqslant}
\renewcommand{\leq}{\leqslant}
\newtheorem{thm}{Theorem}
\newtheorem{prop}[thm]{Proposition}
\newtheorem{lem}[thm]{Lemma}
\theoremstyle{definition}
\newcommand{\prob}[1]{{\mathbb{P}}\left( #1 \right)}
\newcommand{\eqan}[1]{\begin{align} #1 \end{align}}
\newcommand{\rv}{\text}
\begin{document}

\title{{\Large {\bf Staffing many-server systems with admission control and retrials (report version)}}}

\author{
A.J.E.M. Janssen\footnotemark[1] \and
J.S.H. van Leeuwaarden\footnotemark[2]
        }

\date{\today}

\maketitle

\footnotetext[1]{Eindhoven University of Technology and Eurandom, Department of Mathematics and Computer Science and Department of Electrical Engineering, P.O. Box 513, 5600 MB Eindhoven, The Netherlands. E-mail a.j.e.m.janssen@tue.nl
}

\footnotetext[2]{Eindhoven University of Technology, Department of Mathematics and Computer Science, P.O. Box 513, 5600 MB Eindhoven, The Netherlands. E-mail j.s.h.v.leeuwaarden@tue.nl
}

\begin{abstract}
In many-server systems it is crucial to staff the right number of servers so that targeted service levels are met.
These staffing problems typically lead to constraint satisfaction problems that are hard to solve. During the last decade, a powerful many-server asymptotic theory has been developed to solve such problems and optimal staffing rules are known to obey the square-root staffing principle. This paper develops many-server asymptotics in the so-called QED regime, and presents refinements to many-server asymptotics and square-root staffing for a Markovian queueing model with admission control and retrials.

\vspace{1.5mm}

\noindent {\bf Keywords}: Many-server systems, QED regime, Halfin-Whitt regime, heavy traffic, diffusion limits, admission control, square-root staffing, optimality gap, asymptotic dimensioning

\vspace{1.5mm}

\noindent {\bf AMS 2000 Subject Classification}: 60K25, 68M10, 41A60.

\end{abstract}

\section{Introduction}\label{sec:asy}

A key challenge in large many-server systems is to balance the trade-off between
 operational costs and quality-of-service offered to customers. There is by now a vast literature on the asymptotic analysis of many-server systems. Many papers describe
limiting behavior of a sequence of queues, which is then used to approximate the
characteristics of a member of the sequence, i.e., the performance of a finite-sized queueing system. Depending on how this
sequence is parameterized, its limiting behavior is different, giving rise to different approximations (see \cite{borst}
and \cite{mandelbaum}). One of the most \rv{popular} approximations
arises in the Quality-and-Efficiency-Driven (QED) regime, also known as the Halfin-Whitt regime \cite{halfinwhitt}, in which
the number of servers $s$ and the offered workload $\lambda$ are related
according to a square-root principle $s=\lambda+\beta \sqrt{\lambda}$, for a
constant $\beta$, and $s$ (and $\lambda$) are taken to infinity.

In this paper we consider a Markovian many-server system with admission control in the QED regime. We consider an admission control policy that lets an arriving customer enter the system according to a probability depending on the queue length. In particular, a customer meeting upon arrival $k$ other customers is admitted with probability $p_k$, and we shall allow for a wide range of such admission policies characterized by $(p_k)_{k\geq s}$.
For such admission-controlled systems, we consider the QED regime and the square-root staffing rule $\lambda=s-\gamma \sqrt{s}$ for some fixed constant $\gamma$. This is asymptotically (as $s\to \infty$), and for all practical purposes, equivalent with setting $s=\lambda+\beta\sqrt{\lambda}$ for some fixed constant $\beta$. The reason for staffing via $\lambda$ instead of $s$ is that it allows for mathematically more elegant derivations. All results in this paper regarding $\lambda=s-\gamma \sqrt{s}$ can be converted into results for $s=\lambda+\beta \sqrt{\lambda}$  at the cost of increased notational burden.

Since certain customers are rejected upon arrival, it seems natural to extend the model with the feature that rejected customers can reattempt. The modeling of reattempts or retrials is known to be challenging \cite{Cohen:1957,FaT:1997}, which is why one often resorts to computational approaches \cite{Artalejo-GC:2008}. These numerical approaches face increasing numerical difficulties when the number of servers becomes large, which is precisely the regime we are interested in. Therefore, we combine the QED regime with a limiting regime for slow retrials, meaning that rejected customers reattempt after a relatively (compared to the time scale of the system) long time. The combination of these two asymptotic regimes leads to a tractable model with a closed-form solution, for which we are able to derive QED approximations for some of the relevant performance measures.

We leverage these QED approximations to obtain results for staffing problems. The QED regime is particulary suited for staffing many-server systems, as it combines large capacity with high utilization. This appealing feature was exploited in \cite{borst} that introduced a rigorous asymptotic framework for applying the
square-root staffing principle to two classes of problems: constraint
satisfaction and cost minimization. In \cite{borst} it was observed that square-root staffing based on QED approximations
is effective and accurate over a wide range of system parameters
for the Erlang C model. We apply the same approach to develop square-root staffing rules for several constraint satisfaction problems for the system at hand.\\

\noindent{\bf Square-root staffing.}
The core of staffing problems in many-server systems is to determine the right trade-off between quality and capacity.
Quality is formulated in terms of some targeted service level. Take as an example the delay probability $D_F(s,\lambda)$ (see \eqref{3.4}). A large delay probability is perceived as negative, and the targeted service level could be to keep the delay probability below some value $\eps$. The smaller $\eps$, the better the offered service. Once the targeted service level is set, the objective from the system's perspective is to determine
the largest load $\lambda$ (or the lowest staffing level $s$) such that the target $D_F(s,\lambda)\leq\eps$ is met. This is what we have referred to
as a constraint satisfaction problem.

%\rv{Let us take the delay constraint as an example. In this case,} the objective from the call center's perspective is to determine the lowest staffing level $s$ such that the target $A(s,\lambda,\theta)\leq\eps$ is met.

The delay probability is a function of the two model parameters $s$ and $\lambda$, and of the admission policy.
Because the delay probability is a continuous and monotone increasing function in $\lambda$, the constraint satisfaction problem is equivalent to finding the
$\lambda_{{\rm opt}}$ such that $D_F(s,\lambda_{{\rm opt}})=\eps$. To solve this inverse problem, we shall invoke the theory of asymptotic dimensioning introduced in \cite{borst} and extended in the first part of this paper to admission control and retrials. This theory fully exploits the QED regime for large systems, in a way that reduces considerably the complexity of the inverse problem. That is, in the QED regime (with $\lambda=s-\gamma\sqrt{s}$, $s\rightarrow\infty$, $\gamma$ not scaling with $s$), the performance measures in our model can be approximated by their limiting counterparts. For instance,
$D_F(s,\lambda)$ can be approximated by some function $D_*(\gamma)$ that only depends on $\gamma$ (and no longer on $s$ or $\lambda$). Hence, the inverse problem can then be approximatively solved by searching for the $\gamma_*$ such that
$D_*(\gamma_*)=\eps$, and then setting the load according to $\lambda_*=s-\gamma_*\sqrt{s}$. We refer to this rule as \textit{conventional square-root staffing}. In this asymptotic approach, one expects that the better the approximation $D_F(s,\lambda)\approx D_*(\gamma)$, the smaller the error $|\lambda_{{\rm opt}}-\lambda_*|$.
Based on the QED regime, one \rv{also} expects the approximation $\lambda_*$ to be accurate for large values of $s$, i.e.,~for large-scale systems with many servers. Indeed, we prove that $|\lambda_{{\rm opt}}-\lambda_*|=O(1)$, where a function
$f(\lambda)=O(g(\lambda))$  if
$\limsup_{\lambda\rightarrow\infty}|f(\lambda)/g(\lambda)|<\infty$.\\

%\rv{Also, it is worth noting that for any finite-sized system this approach does not guarantee that the constraint is satisfied; it is only in the limit that the staffing rule adheres rigidly to the constraint.}

\noindent{\bf Refined staffing.}
%The original work of \cite{borst} is for the Erlang C model (without abandonments). They show that errors or inaccuracies are expected to arise from the fact that the actual system is finite-sized. Despite these potential inaccuracies, \cite{borst} show by numerical experiments that the approximation $s_*$ performs exceptionally well in almost all regimes. That is, $s_*$ usually differs by not more than one agent from the true optimum, even for systems with moderate values of $\lambda$. The results in \cite{borst} suggest that any staffing rule of the form $\lambda+\beta_*\sqrt{\lambda}+o(\sqrt{\lambda})$ is asymptotically optimal. \cite{companion} have developed refinements of the square-root staffing principle by utilizing a new asymptotic expansion for the Erlang C formula. In particular, they characterize the above-mentioned $o(\sqrt{\lambda})$ small order term for two different staffing problems, and
%provide theoretical support for the numerical experiments in  \cite{borst}.
We shall also derive refined staffing rules, for which we first develop refined QED approximations for the objective function, and then characterize the approximative solutions to the constraint satisfaction problems. The refined staffing rules are of the form
\begin{equation}
\label{refined}
\lambda_\bullet = s-\gamma_*\sqrt{s} + r_\bullet,
\end{equation}
with $r_\bullet$ a simple function of  $\gamma_*$, $s$ and  $\eps$. We shall uniquely identify $r_\bullet$ (for each considered constraint satisfaction problem), and prove that the refined staffing level in \eqref{refined} satisfies
\begin{equation}\label{optgap}
\lambda_{{\rm opt}}-\lambda_\bullet=O(s^{-1/2}),
\end{equation}
We refer to the order term that expresses the difference between the exact optimal staffing level and the approximate staffing level as the {\it optimality gap}. Hence, the optimality gap of $\lambda_\bullet$ is $O(s^{-1/2})$, which suggests that the staffing level $\lambda_\bullet$ \rv{becomes more accurate as $s$ increases}.
Note that $\lambda_\bullet=\lambda_*+\gamma_\bullet$. Since the optimality gap of the conventional staffing level $\lambda_*$ equals  $O(1)$, we can expect that $\lambda_\bullet$  should be a more accurate prescription than $\lambda_*$. In addition,
because $\gamma_\bullet$ in fact describes the optimality gap of
$\lambda_*$, or more precisely,
 \begin{equation}\label{optgap}
\lambda_{\rm opt}-\lambda_*=\gamma_\bullet+O(s^{-1/2}),
\end{equation}
it allows us to perform an analytical assessment of the accuracy of both
conventional and refined square-root staffing.\\

\noindent {\bf The influence of admission policies and retrials.}
Our main results are captured in Theorems \ref{thmasss} and \ref{thmasss2}, which formally establish the necessary results for both conventional staffing rules and their refinements. These two theorems establish that as the system size $s$ increases, the difference between the true maximal load that adheres to the constraint $\lambda_{\rm opt}$, and the conventional square-root staffing prescription $\lambda_*$, converges to the real number $\gamma_\bullet$, see \eqref{optgap}. We stress that these results hold across the large class of admission control policies considered in this paper, and for the systems with and without retrials.

We obtain the explicit expression of $\gamma_\bullet$
in each case, and show that it is always a simple function of $\gamma_*$. We can therefore investigate numerically the impact of the refinements in comparison with conventional square-root staffing. It turns out that, unlike in the classical Erlang C model \cite{borst}, the refinement $\gamma_\bullet$ is significant in many cases, due to different system parameters. Our findings further suggest that in the presence of admission control, more care needs to be taken in applying many-server asymptotic results to small or moderate size systems. In particular, when the admission control becomes more lenient, and hence on average more customers are admitted, our numerical findings suggest that the refinements to the conventional square-root staffing are necessary.\\

\noindent{\bf Overview of the paper.}
In this paper we apply the QED regime to a Markovian many-server system with admission control and retrials. We analyze this system in the limiting regime, in a similar spirit as was done for the Erlang C model  \cite{halfinwhitt,companion}, the Erlang B model \cite{erlangb} and the Erlang A model (with abandonments) \cite{bo}. In the recent paper \cite{ajl} on a many-server system with retrials, the admission control policy that rejects all delayed customers (loss system) was  analyzed. The current paper strongly builds on several results obtained in \cite {ajl}.
Compared to  earlier studies \cite{halfinwhitt,companion,erlangb,bo,ajl}, the system in this paper brings about additional mathematical challenges, because of the effects of rejection and reattempts. In short, we make the following contributions:
\begin{itemize}
\item[(i)] We consider two stationary performance measures: The probability that an arriving customer finds all servers occupied $D_F(s,\lambda)$, and the probability that an arriving customers is rejected $D_F^R(s,\lambda)$.
%For specific admission policies, these performance measures reduce to the well known Erlang B and Erlang C formulae.
For both performance measures we derive in Theorems \ref{thm:qed} and \ref{thm:qedretrials}  the limiting expressions in the QED regime, for the cases with and without retrials.
    \item[(ii)]
We next consider dimensioning problems of the type:  For fixed $s$, find $\lambda$ for which the rejection probability $D_F^R(s,\lambda)$ has some prescribed value. We solve this inverse problem both for finite $s$ and for $s\to \infty$ in the QED regime. The inverse problem in the QED regime is easier to solve, and provides a leading-order estimate, in terms of the hazard rate of the standard normal distribution, for the finite-$s$ inverse problem. We characterize explicitly the error made by replacing the finite-$s$ inverse problem by its QED counterpart, by deriving correction terms in Theorems \ref{thmasss} and \ref{thmasss2}. These correction terms are then also used to develop the refined staffing rules and establish the optimality gaps.
\item[(iii)]
We then consider dimensioning problems of the type:  For fixed $s$, find $\lambda$ for which the carried traffic $\lambda(1-D_F^R(s,\lambda))$ has a prescribed value. These dimensioning problems are shown to have typically two solutions: A small $\lambda$ leads to a large probability of being admitted $1-D_F^R(s,\lambda)$, while a large $\lambda$ leads to a small admittance probability; see Theorems \ref{thm12} and \ref{thmbi}. This phenomenon of two solutions has also been observed in certain loss networks with alternative two-link routing \cite{bistability}.
\item[(iv)] The more technical part of this paper deals with incorporating the effect of retrials, for which an essential role is played by the generalized Cohen equation
$\Omega=(\lambda+\Omega)D_F^R(s,\lambda+\Omega)$ and its solution $\Omega$ representing the average rate of retrials. In the next section we explain in more detail the fundamental role of this equation in our study.
\end{itemize}

The paper is structured as follows. In Section \ref{sec:model} we describe the model. In Section \ref{sec:QED} we present the limiting expressions for the performance measures in the QED regime, and we use these results to deal with the dimensioning problems in Section \ref{sec:dim}.
Finally, we present in Section \ref{sec:stab} results for dimensioning problems with multiple solutions. The proofs of the main results are given in Sections  \ref{sec4}-\ref{proofthm}, while the proofs of supporting results are deferred to the appendix.
%Finally, we discuss some topics for future research in Section \ref{sec8}.

\section{Model description}\label{sec:model}
The basic model described in this section is taken from \cite{fa}.
Consider a system with $s$ parallel servers to which customers arrive according to a Poisson process with rate $\lambda$. The service times of customers are exponentially distributed with mean one. An {\it admission policy} dictates whether a customer is admitted to the system or rejected. A customer that finds upon arrival a free server is immediately assigned to that server, and leaves the system after service. A customer that finds upon arrival $k$ other customers in the system, $k\geq s$, is allowed to join the queue with probability $p_k$ and is rejected with probability $1-p_k$. In this way, the sequence $(p_k)_{k\geq s}$ defines the admission policy. Since we are interested in large many-server systems working at critical load, and hence serving many customers, the parameter $p_k$ should be interpreted as the fraction of customers admitted, instead of the probability that determines the fate of just one single customer. For the results presented in this paper we impose only mild conditions on the sequence $(p_k)_{k\geq s}$, allowing for a wide range of admission policies to be considered.

Under the above Markovian assumptions, and assuming that all interarrival times and service times are mutually independent, the system can be described as a birth-and-death process $(C(t))_{t \geq 0}$ with $C(t)$ the number of customers in the system at time $t$. The birth and death rates from state $k$ are $\lambda p_k$ (with $p_k=1$ for $k=0,\ldots,s-1$) and $\min(k,s)$, respectively.  Assuming the stationary distribution exists, with $\pi_k=\lim_{t\rightarrow\infty}\prob{C(t)=k}$, it readily follows from solving the detailed balance equations that
\eqan{
\pi_k=\left\{
        \begin{array}{ll}
          \pi_0 \frac{\lambda^k}{k!}, & \hbox{$1\leq k\leq s$,} \\
          \pi_0 \frac{\lambda^k}{s!s^{k-s}}\prod_{j=s}^{k-1} p_j, & \hbox{$k\geq s+1$,}
        \end{array}
      \right.
}
with
\eqan{\label{22222}
\pi_0^{-1}=\sum_{k=0}^s\frac{\lambda^k}{k!}+\frac{\lambda^s}{s!}F\Big(\frac{\lambda}{s}\Big)
}
and
\beq \label{3.3}
F(  x)=\sum_{n=0}^{\infty}p_s\cdots p_{s+n}x^{n+1}.
\eq
From \eqref{22222} it can be seen that the stationary distribution exists when $F(\frac{\lambda}{s})<\infty$. Since $ p_{s+n}\in[0,1]$, we have that $F(\frac{\lambda}{s})<\infty$ when $0\leq\lambda<s$.
 When $\lambda\geq s$ we need to be more careful.
The radius of convergence of the power series $F(x)$ is given by $1/P$ with
\beq \label{4.1}
P:= \ba[t]{c}{{\rm lim}~{\rm sup}}\\[-1mm]{\mbox{\footnotesize $n\pr\infty$}}\ea (p_s\cdot...\cdot p_{s+n})^{\frac{1}{n+1}}\in[0,1].
\eq
In a major portion of the main text, we assume the following condition:
\beq\label{condA}
P\in[0,1),\ F(\frac1P - 0)=\lim_{x\uparrow\frac1P}F(x)=\infty.
\eq
Under this condition  it can be easily observed from \eqref{3.3} that the stability condition for our system becomes
\beq \label{stab1}
\lambda \in [0,\lambda_P) \quad {\rm with} \quad \lambda_P=\frac{s}{P},
\eq
where $\lambda_P=\infty$ when $P=0$. The condition \eqref{condA} is certainly not as general as possible to develop the theory, but it excludes cases that need separate consideration, thereby distracting attention from the bottom line of the exposition. Stability is guaranteed when $\lambda<s$. Also, when
 $\lim_{k\to \infty}p_k=0$, we have $P=0$ and thus stability for all $\lambda\geq 0$.
Condition \eqref{condA} is also satisfied for the case $p_k=p\in(0,1)$ for all $k\geq s$, where $F(x)=px/(1-px)$ so that $F(\frac1P -0)=\infty$, with $P=p$, indeed. We exclude at this point the case $P=1$, which would for example occur in the case $p_k=1$, $k\geq s$, and the cases that $F(\frac1P -0)<\infty$. However, in Section \ref{sec4} and Appendix \ref{aA}, the results are proved under general conditions.

Let
  \begin{equation}\label{}
B(s,\lambda) =  \frac{\lambda^s/s!}{\sum_{k=0}^{s}\lambda^k/k!}
\end{equation}
denote the Erlang B formula representing the stationary blocking probability in an $M/M/s/s$ system.
A crucial performance measure is the stationary probability $D_F(s,\lambda)=\sum_{k=s}^\infty \pi_k $ that an arriving customer finds all servers occupied, given by
\beq \label{3.4}
D_F^{-1}(s,\lambda)=\frac{B^{-1}(s,\lambda)+F(\frac{\lambda}{s})}{1+F(\frac{\lambda}{s})},
\eq
where $D_F^{-1}$ is short-hand notation for $(D_F)^{-1}$.
Note that for $p_k=0$, $k\geq s$, the term $F(\lambda/s)$ vanishes, and  the probability $D_F(s,\lambda)$ reduces to $B(s,\lambda)$.  Also, for $p_k=1$, $k\geq s$, the probability $D_F(s,\lambda)$
reduces to the Erlang C formula given by
  \begin{equation}\label{}
C(s,\lambda)\ = \frac{\frac{\lambda^s}{(s-1)!(s-\lambda)}}{\sum_{k=0}^{s-1}\lambda^k/k!+\frac{\lambda^s}{(s-1)!(s-\lambda)}},
\end{equation}
representing the stationary delay probability in an $M/M/s$ system.

Another important performance measure is the stationary probability $D_F^R(s,\lambda)=\sum_{k=s}^\infty \pi_k p_k$ of being rejected, given by
\beq \label{3.5}
D_F^{-R}(s,\lambda)=\frac{B^{-1}(s,\lambda)+F(\frac{\lambda}{s})}{1+(1-s/\lambda)F(\frac{\lambda}{s})}.
\eq
It follows from results in Section \ref{sec4} that
\beq
\max\{0,1-\frac{s}{\lambda}\}\leq D_F^R(s,\lambda)\leq B(s,\lambda)\leq D_F(s,\lambda)\leq 1
\eq
for $0\leq\lambda< \lambda_P$. Moreover, as a consequence of condition \eqref{condA} we have
\beq
D_F(s,\lambda_P-0)=1, \quad D_F^R(s,\lambda_P-0)=1-P.
\eq

The birth-and-death process $(C(t))_{t \geq 0}$ relies on the assumption that rejected customers are considered lost. Alternatively, we could assume that rejected customers reattempt to enter the system after some time. In that case, rejected customers start producing reattempts until they are allowed to enter. Assume that periods between successive reattempts of    a  rejected customer  are exponentially distributed with rate $\mu$, independent of interarrival and service times.
The system can then be described as a two-dimensional
process $(C(t),N(t))_{t \geq 0}$ with $C(t)$ the number of customers in the system and $N(t)$ the number of rejected customers at time $t$. Under the above
assumptions this process is a continuous-time Markov chain
on the lattice infinite strip $\{0,1,\ldots,s\}\times \mathbb{Z}_+$.

Since the transition rates of this process clearly depend on the second coordinate, the process
$\{(C(t),N(t)) ; t \geq 0\}$ is hard to analyze.
In fact, even deriving the stationary distribution poses analytical difficulties, and no closed-form solution seems to be available. We therefore make the following assumption: Reattempts arrive to the system according to a Poisson process with rate $\Omega$, independent of the Poisson process of customers that arrive to the system for the first time. This assumption is also known as the {\it retrials see time averages} (RTA) approximation.
Under this assumption, the total flow of customers arriving to the system is a Poisson process with rate $\lambda+\Omega$. The unknown rate $\Omega$ should then be the solution to
\eqan{\label{genC}
\Omega=(\lambda+\Omega)D_F^R(s,\lambda+\Omega).
}
Equation \eqref{genC} is intuitively clear as it equates two expressions for the rate of reattempts. In some cases the RTA approximation can be theoretically justified. Cohen \cite{Cohen:1957} showed that the system with $p_k=0$ for $k\geq s$, in the limit as $\mu\downarrow 0$, behaves as an Erlang loss system, except with an increased arrival intensity. More specifically, as $\mu\downarrow 0$, the distribution
of the number of busy servers converges to the corresponding distribution
for the standard Erlang loss system $M/M/s/s$ (which is a truncated
Poisson distribution), but with increased arrival rate $\lambda+\Omega$, where $\Omega$ is defined as the solution to \eqref{genC}.  Indeed, in the case of infinitely long retrial times, it fits intuition that the flow of reattempts is independent from the flow of primary customers. For retrial queues with finite retrial times, the RTA approximation has proved useful and accurate for many retrial systems.
We shall refer to \eqref{genC} as the {\it generalized Cohen equation}.

%Set $\lambda_{P,F}:=\lambda_P(1-D_F^R(s,\lambda_P-0))$.

\begin{thm}[Unique solution Cohen's equation]\label{thmm1}Under condition \eqref{condA},
there is a unique solution $\Omega_{s,F}(\gamma)$ of equation {\rm(\ref{genC})} for any $\lambda\in(0,s)$.
\end{thm}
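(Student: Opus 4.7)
The plan is to reparameterize \eqref{genC} by setting $\Lambda := \lambda + \Omega$, so that the generalized Cohen equation becomes $h(\Lambda) := \Lambda(1 - D_F^R(s,\Lambda)) = \lambda$ with $\Omega = \Lambda - \lambda$. Nonnegative solutions $\Omega \geq 0$ correspond to $\Lambda \geq \lambda$, and since $D_F^R \geq 0$ forces $h(\Lambda) \leq \Lambda$, any root $\Lambda^{\star}$ of $h(\Lambda) = \lambda$ automatically satisfies $\Lambda^{\star} \geq \lambda$. It therefore suffices to prove that $h$ is continuous and strictly increasing on $(0, \lambda_P)$ with $h(0^+) = 0$ and $\lim_{\Lambda \uparrow \lambda_P} h(\Lambda) = s$; given $\lambda \in (0, s)$, the intermediate value theorem together with strict monotonicity then supplies a unique $\Lambda^{\star} \in (0, \lambda_P)$, and hence a unique $\Omega_{s,F}(\gamma) = \Lambda^{\star} - \lambda$.

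Continuity on $(0, \lambda_P)$ is immediate from the closed form \eqref{3.5}, since $B(s,\Lambda)$ and $F(\Lambda/s)$ are smooth on this interval, and $h(0^+) = 0$ is trivial. The upper-boundary limit $\lim_{\Lambda \uparrow \lambda_P} h(\Lambda) = s$ is handled using the consequence $D_F^R(s, \lambda_P - 0) = 1 - P$ of \eqref{condA} already recorded in the excerpt, which gives $\lambda_P(1 - (1 - P)) = \lambda_P P = s$ when $P > 0$; the case $\lambda_P = \infty$ (i.e., $P = 0$) requires only a direct asymptotic examination of \eqref{3.5} as $\Lambda \to \infty$.

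The key step, and the main obstacle, is the strict monotonicity of $h$. My plan is to interpret $h$ probabilistically: by PASTA and flow balance in the birth-and-death chain $(C(t))_{t \geq 0}$, the throughput equals both the admission rate $\Lambda(1 - D_F^R(s,\Lambda))$ and the service-completion rate $\mathbb{E}[\min(C_\Lambda, s)]$, giving the identification $h(\Lambda) = \mathbb{E}[\min(C_\Lambda, s)]$. For $\Lambda_1 < \Lambda_2$ in $(0, \lambda_P)$, a standard sample-path coupling of the two chains (equal death rates $\min(k,s)$, pointwise strictly larger birth rates $\Lambda_i p_k$) yields $C_{\Lambda_1} \leq_{\rm st} C_{\Lambda_2}$. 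To sharpen this to strict inequality of throughputs, I observe that the unnormalized weights in \eqref{22222} are strictly increasing in $\Lambda$ for $k \geq 1$, so the normalizing constant $\pi_0(\Lambda)$ is strictly decreasing, and thus $\mathbb{P}(C_{\Lambda_2} \geq 1) > \mathbb{P}(C_{\Lambda_1} \geq 1)$. Combined with the tail-sum identity $\mathbb{E}[\min(X, s)] = \sum_{j=0}^{s-1} \mathbb{P}(X > j)$, this upgrades the weak inequality $h(\Lambda_1) \leq h(\Lambda_2)$ to the strict one $h(\Lambda_1) < h(\Lambda_2)$.

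An alternative route, bypassing probability altogether, is to differentiate \eqref{3.5} directly and verify $h'(\Lambda) > 0$ throughout $(0, \lambda_P)$; this avoids coupling constructions but is substantially more calculation-heavy, since the derivative involves both $B(s,\Lambda)$ and $F(\Lambda/s)$ and cancellations between them. Either approach closes the existence and uniqueness claim.
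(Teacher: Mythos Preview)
Your proposal is correct and complete. Both your argument and the paper's reduce to the same key fact: the carried traffic $\Lambda\mapsto \Lambda(1-D_F^R(s,\Lambda))$ is strictly increasing on $(0,\lambda_P)$ with range $(0,s)$, so that $h(\Lambda)=\lambda$ has a unique root for each $\lambda\in(0,s)$. The difference lies in how this monotonicity is established and in the chosen coordinates.

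The paper works in QED coordinates, rewriting Cohen's equation as the fixed-point problem $a=f_{s,F}^R(\gamma-a)$ and appealing to Proposition~\ref{propp2}(ii), i.e.\ strict increase of $\delta\mapsto\delta+f_{s,F}^R(\delta)$. That proposition is proved purely analytically, via the representation
\[
\gamma+f_{s,F}^R(\gamma)=\frac{\gamma+f_s(\gamma)}{1+\tfrac{1}{\sqrt{s}}\,f_s(\gamma)H_s(\gamma)},
\]
together with monotonicity of $\gamma+f_s(\gamma)$ and of $f_s(\gamma)H_s(\gamma)$ imported from~\cite{ajl}. This is exactly the statement $h$ is increasing, since $\sqrt{s}\,(\gamma+f_{s,F}^R(\gamma))=s-h(\lambda)$ under $\lambda=s-\gamma\sqrt{s}$; the paper in fact reuses this monotonicity to prove Theorem~\ref{thm12}(ii).

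You instead stay in the original parametrization and give a probabilistic proof: the flow-balance identity $h(\Lambda)=\mathbb{E}[\min(C_\Lambda,s)]$, stochastic ordering of the stationary birth--death chains via coupling, and the tail-sum formula, with strictness coming from $\pi_0(\Lambda)$ being strictly decreasing. This route is shorter and entirely self-contained---it needs neither the QED change of variables nor any prior results on $f_s$ from~\cite{ajl}. The paper's analytic route, on the other hand, dovetails with the rest of its machinery and yields extra structural information (for example convexity of $f_{s,F}^R$ in the constant-$p$ case) that is used in later sections.
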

%The proof of Theorem \ref{thmm1} is presented in the appendix. HOEFT DUS NIET, WANT WE VERWIJZEN NAAR FALIN
%In Figure~2, we sketch (with Proposition~\ref{propp1} in mind) $f_{s,F}^R(\gamma)$ with its lower bound $\max\{0,{-}\gamma\}$ and $f_s(\gamma)$ in the left column. In the right column we show an instance of the GCE (\ref{5.3}), with a $\gamma$, chosen such such that the GCE (\ref{5.3}) admits a solution, for the cases that
%\bi{(b'2)0}
%\ITEM{(a)} $\gamma_P={-}\infty$,
%\ITEM{(b1)} $-\infty<\gamma_P<0$ and $H_s(\gamma_P+0)<\infty$,
%\ITEM{(b2)} $-\infty<\gamma_P<0$ and $H_s(\gamma_P+0)=\infty$,
%\ITEM{(b'1)} $\gamma_P=0$ and $H_s({+}0)<\infty$,
%\ITEM{(b'2)} $\gamma_P=0$ and $H_s({+}0)=\infty$.
%\ei
%We have also included, (b2) and (b'2), cases where $f_{s,F}^R(\gamma)$ is non-monotonic and non-convex in $\gamma_P<\gamma\leq\sqrt{s}$. The pictures are stylized so as to enhance the various situations that can occur in Theorem~\ref{thmm1} and its proof. Note that in all cases, including the cases that $\gamma_P=0$, the solution $a=a_{s,F}(\gamma)$ is such that $a<\gamma-\gamma_P$ so that $\gamma-a>\gamma_P$. \\ \\

A proof of Theorem \ref{thmm1} can be distilled from \cite[Section 3]{fa}. In Appendix~\ref{aA} we shall present a self-contained proof of Theorem \ref{thmm1} under general conditions.

%\begin{prop}\label{propp4}
%\beq \label{5.5}
%\Omega_{s,F}(\gamma)\leq \Omega_s(\gamma),
%\eq
%in which $\Omega=\Omega_s(\gamma)$ is the unique solution of Cohen's equation
%\beq \label{5.6}
%\Omega=(\lambda+\Omega)B(s,\lambda+\Omega)
%\eq
%for $\lambda\in(0,s)$, see {\rm\cite[Theorem~\ref{thmm3}]{ajl}}. For any $\lambda\in(0,s)$, there is strict inequality in {\rm(\ref{5.5})} unless $F\equiv 0$.
%\end{prop}
%%\begin{proof}
%%By Proposition~\ref{propp1} we have for any $\gamma$, $\gamma_{P,F}<\gamma<\sqrt{s}$,
%%\beq \label{5.7}
%%a_{s,F}(\gamma)-f_s(\gamma-a_{s,F}(\gamma))\leq a_{s,F}(\gamma)-f_{s,F}(\gamma-a_{s,F}(\gamma))=0
%%\eq
%%with equality if and only if $F\equiv 0$. Since $a-f_s(\gamma-a)=0$ for $a=a_s(\gamma)$ and $a-f_s(\gamma-a)$ is strictly increasing in $a$ by \cite{ajl}, Theorem~\ref{thmm1}1, it follows that (\ref{5.5}) holds with equality if and only if $F\equiv 0$.
%%\end{proof}
%%Also, see the second column in Figure~2. \\ \\
%
%\begin{prop}\label{propp6}For $p_k=p\in(0,1)$, $k\geq s$,  $\Omega_{s,F}(\gamma)$ is strictly increasing and convex in $0<\lambda<s$.
%\end{prop}

 \section{QED limits}\label{sec:QED}
The QED regime for many-server systems refers to  scaling of the arrival rate $\lambda$ and the number of servers $s$ such that, while both $\lambda$ and $s$ increase toward infinity, the traffic intensity $\rho=\lambda/s$ approaches unity and
\begin{equation}\label{scaling}
(1-\rho)\sqrt{s}\rightarrow \gamma,
\end{equation}
where $\gamma$ is a fixed constant.
The scaling combines large capacity with high utilization. For the Erlang loss and delay systems, this kind of scaling leads to the  classical results
(see e.g.~\cite[Section 5.2]{erlangb}), for $\gamma\in(-\infty,\sqrt{s})$ fixed,
\beq \label{3.6}
\frac{1}{\sqrt{s}}\,B^{-1}(s,s-\gamma\sqrt{s})=\frac{\Phi(\gamma)}{\varp(\gamma)}+O\Bigl(\frac{1}{\sqrt{s}}\Bigr),
~~~~~~s\pr\infty,
\eq
and for $\gamma\in(0,\sqrt{s})$ fixed,
\begin{equation}\label{limresult}
\lim_{s\rightarrow\infty}C(s,s-\gamma\sqrt{s}) =\left(1+\gamma \frac{\Phi(\gamma)}{\phi(\gamma)}\right)^{-1},
\end{equation}
where  $\Phi(x)$ and $\varp(x)$ denote the standard normal cumulative distribution function  and density, respectively.

The following result will prove useful in establishing QED limiting results.
\begin{lem}[Two decompositions]\label{lemdec} For $\lambda \in [0,\lambda_P)$,
\eqan{
D_F^{-1}(s,\lambda)&=(1-q_\lambda ){B^{-1}(s,\lambda)}+{q_\lambda }{C^{-1}(s,\lambda)},
\label{4.29}\\
D_F^{-R}(s,\lambda)&=B^{-1}(s,\lambda)+\frac{q_\lambda }{1-q_\lambda }\,C^{-1}(s,\lambda),\label{4.30}
}
%\eqan{\label{4.10}
%H_s(\gamma)=\sum_{n=0}^\infty p_s\cdot \cdots p_{s+n}\Big(1-\frac{\gamma}{\sqrt{s}}\Big)^n
%}
where
\beq \label{4.31}
q_\lambda =\frac{\frac{s}{\lambda}F(\frac{\lambda}{s})}{1+F(\frac{\lambda}{s})}.
\eq
\end{lem}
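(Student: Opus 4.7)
The plan is to reduce both identities to a single classical relation between Erlang~B and Erlang~C, namely
\begin{equation*}
C^{-1}(s,\lambda) = (1-\lambda/s)\,B^{-1}(s,\lambda) + \lambda/s,
\end{equation*}
which follows in a few lines from the closed forms of $B(s,\lambda)$ and $C(s,\lambda)$ by rewriting $\lambda^s/((s-1)!(s-\lambda))=(s/(s-\lambda))\cdot\lambda^s/s!$ in the numerator of the Erlang~C formula and dividing through by $\lambda^s/s!$. Together with \eqref{3.4}, \eqref{3.5}, and the definition \eqref{4.31} of $q_\lambda$, the two decompositions then reduce to routine algebra.

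For \eqref{4.29} I would rewrite the right-hand side as $B^{-1}+q_\lambda(C^{-1}-B^{-1})$, invoke the Erlang~B/C relation to obtain $C^{-1}-B^{-1}=-(\lambda/s)(B^{-1}-1)$, and use $q_\lambda\cdot(\lambda/s)=F(\lambda/s)/(1+F(\lambda/s))$, which is immediate from \eqref{4.31}. Combining over the denominator $1+F(\lambda/s)$ collapses everything to $(B^{-1}+F(\lambda/s))/(1+F(\lambda/s))$, and this is $D_F^{-1}(s,\lambda)$ by \eqref{3.4}.

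For \eqref{4.30} the structure is parallel. First simplify $1-q_\lambda=[1+(1-s/\lambda)F(\lambda/s)]/(1+F(\lambda/s))$, so that $q_\lambda/(1-q_\lambda)=(s/\lambda)F(\lambda/s)/[1+(1-s/\lambda)F(\lambda/s)]$. Then substitute the Erlang~B/C identity for $C^{-1}$ and put the result on the common denominator $1+(1-s/\lambda)F(\lambda/s)$; the $B^{-1}$ cross-terms cancel exactly, leaving $(B^{-1}+F(\lambda/s))/[1+(1-s/\lambda)F(\lambda/s)]=D_F^{-R}(s,\lambda)$ by \eqref{3.5}. I do not anticipate a genuine obstacle: the argument is purely algebraic. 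The only point worth noting is that on the stability range $\lambda\in[0,\lambda_P)$ one may have $\lambda>s$, where $C(s,\lambda)$ ceases to be a probability, but $C^{-1}(s,\lambda)$ as defined by the explicit formula remains a well-defined algebraic quantity, which is all the manipulations above require.
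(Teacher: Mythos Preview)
Your argument is correct; both identities reduce, exactly as you say, to the single relation $C^{-1}(s,\lambda)=(1-\lambda/s)B^{-1}(s,\lambda)+\lambda/s$, and the algebra checks out line by line.

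The paper's proof uses the same key ingredient but is organised differently: it passes to QED coordinates $\gamma$ (with $\lambda=s-\gamma\sqrt{s}$), expresses $D_F^{-1}$ via the auxiliary functions $f_s$, $f_{s,F}$, $H_s$ of Lemma~\ref{lemm1}, and then \emph{derives} the split by algebraically separating the numerator into a $(1-q_\lambda)$-part and a $q_\lambda$-part before identifying the pieces with $B^{-1}$ and $C^{-1}$ through the identity $C^{-1}=1-\gamma/\sqrt{s}+(\gamma/\sqrt{s})B^{-1}$ (the paper's (\ref{EC.21}), which is your B/C relation in $\gamma$-coordinates). Your route is the direct \emph{verification} in $\lambda$-coordinates starting from the claimed right-hand side; it is shorter and self-contained for this lemma, while the paper's detour through $f_{s,F}$ and $H_s$ is natural in context because those objects are already developed for the monotonicity results in Section~\ref{sec4}. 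Your closing remark on $C^{-1}$ being well defined for $\lambda>s$ is also made explicitly in the paper just after the lemma.
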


The proof of Lemma \ref{lemdec} is given in Appendix~\ref{aA}. It can also be obtained from more general results in \cite{nes}.
Note that the function $C(s,\lambda)$ is also defined for $\lambda>s$, while in the $M/M/s$ queue the stability condition is $\lambda<s$.
It is further shown in Appendix~\ref{aA} that $0\leq q_\lambda \leq1$, with $q_\lambda =1$ if and only if $p_k=1$, $k\geq s$. Thus, for instance, $D_F^{-R}$ always exceeds $B^{-1}$ and the excess is given by the second term of the right-hand side of (\ref{4.30}), which is the product of a factor entirely determined by the admission policy and the Erlang C formula. Also, $D_F^{-1}$ is a convex combination, with a $\gamma$-dependent convexity parameter $1-q_\lambda $, of the Erlang B and C formulae. When an admission policy is mild, implying that $q_\lambda $ is close to 1, we have that the Erlang C formula is dominant. When an admission policy is strict, the Erlang B formula is dominant. Aside from these general comments, the variety of weight functions $q_\lambda $ that can occur, see (\ref{4.31}), is rather substantial. In the QED regime, though, the Erlang B formula is always dominant. Indeed, in the QED limit, we have $\lambda/s\to 1$, and so $q_\lambda\to F(1)/(1+F(1))$ which is a finite number $\in(0,1)$ by our condition \eqref{condA}. Now \eqref{3.6} and \eqref{limresult}  show that $B^{-1}$ grows like $\sqrt{s}$ while $C^{-1}$ remains bounded as $s\to\infty$.

%The quantity on the right-hand side of \eqref{erlangclt} is the reciprocal of Mills' ratio evaluated at $-\gamma$.
We now apply the scaling     \eqref{scaling} to the system with admission policy. We henceforth keep working with the notation for the QED regime in \eqref{scaling}, which is why we reformulate the stability condition \eqref{stab1} as
\beq \label{stab2}
\gamma\in (\gamma_P,\sqrt{s}]
\eq
with
\beq \label{4.2}
\gamma_P={-}\,\frac{1-P}{P}\,\sqrt{s}\in({-}\infty,0).
\eq
%We first establish a result for the retrial rate $\Omega$ due to repeated calls.

% Under the stationary assumption that $p_{n+s}=P_n$, $n=0,1,\ldots$ for all $s=1,2,\ldots$, so that the admission policies does not scale with the size of the system, and the assumption that $F(1^-)=\sum_{n=0}^\infty P_n<\infty$, it is shown that

 \begin{thm}[QED limits without retrials]\label{thm:qed}
Under condition \eqref{condA},
 for $\lambda=s-\gamma\sqrt{s}$, with $\gamma\in (-\infty,\sqrt{s}]$ fixed, %and assuming that $F(1^-)=\sum_{n=0}^{\infty}p_s\cdots p_{s+n}<\infty$,
 \begin{align}
 \lim_{s\to \infty}\sqrt{s}D_F(s,s-\gamma\sqrt{s})&=(1+F(1))\frac{\phi(\gamma)}{\Phi(\gamma)}, \label{eq1}\\
  \lim_{s\to \infty}\sqrt{s}D_F^R(s,s-\gamma\sqrt{s})&=\frac{\phi(\gamma)}{\Phi(\gamma)}\label{eq2}.
 \end{align}
 \end{thm}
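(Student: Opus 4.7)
The plan is to combine the two decompositions of Lemma \ref{lemdec} with the classical QED asymptotics \eqref{3.6} for the Erlang~B formula and \eqref{limresult} for the Erlang~C formula. Since both identities in \eqref{4.29}--\eqref{4.30} express the inverse of the target quantity as a (near-)convex combination of $B^{-1}$ and $C^{-1}$, the problem reduces to tracking three ingredients along the QED scaling $\lambda = s-\gamma\sqrt{s}$: the weight $q_\lambda$, the growth of $B^{-1}$, and the boundedness of $C^{-1}$.

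First I would analyze $q_\lambda$ in the QED regime. Under condition \eqref{condA}, the power series $F$ has radius of convergence $1/P>1$, so $F$ is continuous at $1$ and $F(\lambda/s)=F(1-\gamma/\sqrt{s})\to F(1)\in[0,\infty)$. Since $s/\lambda = 1/(1-\gamma/\sqrt{s})\to 1$, the definition \eqref{4.31} immediately yields
\begin{equation*}
q_\lambda \;\longrightarrow\; q_\infty := \frac{F(1)}{1+F(1)} \in [0,1),
\qquad 1-q_\lambda \;\longrightarrow\; \frac{1}{1+F(1)}.
\end{equation*}
Next, the classical asymptotic \eqref{3.6} gives $B^{-1}(s,s-\gamma\sqrt{s})/\sqrt{s}\to \Phi(\gamma)/\varp(\gamma)$, while \eqref{limresult} yields $C^{-1}(s,s-\gamma\sqrt{s})\to 1+\gamma\Phi(\gamma)/\varp(\gamma)$, so $C^{-1}/\sqrt{s}\to 0$.

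Then I would divide the decomposition \eqref{4.29} by $\sqrt{s}$ and take the limit:
\begin{equation*}
\frac{D_F^{-1}(s,\lambda)}{\sqrt{s}} = (1-q_\lambda)\,\frac{B^{-1}(s,\lambda)}{\sqrt{s}} + q_\lambda\,\frac{C^{-1}(s,\lambda)}{\sqrt{s}}
\;\longrightarrow\; \frac{1}{1+F(1)}\cdot\frac{\Phi(\gamma)}{\varp(\gamma)} + 0,
\end{equation*}
from which \eqref{eq1} follows by inversion. Similarly, dividing \eqref{4.30} by $\sqrt{s}$ and using that $q_\lambda/(1-q_\lambda)\to F(1)$ is bounded together with $C^{-1}/\sqrt{s}\to 0$, I obtain $D_F^{-R}(s,\lambda)/\sqrt{s}\to \Phi(\gamma)/\varp(\gamma)$, giving \eqref{eq2}.

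There is essentially no hard step here once Lemma \ref{lemdec} and the known Erlang~B/C asymptotics are in place; the only subtlety is ensuring that the convergence $F(\lambda/s)\to F(1)$ is legitimate, which needs the radius of convergence to strictly exceed $1$. This is exactly where condition \eqref{condA} enters through $P<1$, which also rules out the boundary case $\lambda/s\uparrow 1/P$ where $F$ would blow up. For the stated range $\gamma\in(-\infty,\sqrt{s}]$, stability \eqref{stab2} and the bounds just after \eqref{3.5} guarantee all quantities are well-defined, so no separate argument for large negative $\gamma$ is needed.
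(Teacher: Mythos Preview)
Your argument is correct. The paper's own proof is slightly more direct: it works straight from the explicit formulas \eqref{3.4} and \eqref{3.5} rather than from the decomposition Lemma~\ref{lemdec}, using only the Erlang~B asymptotic \eqref{3.6} together with the continuity $F(1-\gamma/\sqrt{s})\to F(1)$, and never invoking the Erlang~C limit \eqref{limresult}. In \eqref{3.4}--\eqref{3.5} the term $F(\lambda/s)$ appears additively in the numerator (so $F(\lambda/s)/\sqrt{s}\to 0$) and multiplicatively in the denominator (producing the factor $1+F(1)$ or $1$), which gives the result at once.

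Your route through Lemma~\ref{lemdec} works just as well and has the conceptual payoff of explaining \emph{why} the Erlang~B term dominates: in the convex combination \eqref{4.29}, the weight $1-q_\lambda$ stays bounded away from zero while $B^{-1}$ grows like $\sqrt{s}$ and $C^{-1}$ remains bounded. The only minor caveat is that \eqref{limresult} is stated in the paper only for $\gamma>0$; for $\gamma\le 0$ you need the extension of the Erlang~C asymptotic, which is immediate from the identity $C^{-1}=\rho+(1-\rho)B^{-1}$ (see \eqref{EC.21}) combined with \eqref{3.6}, valid for all $\gamma\in(-\infty,\sqrt{s})$.
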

\begin{proof}
We have by continuity of $F(x)$ at $x=1$ that for fixed $\gamma\in (-\infty,\sqrt{s}]$
\beq \label{3.8}
F\Bigl(\frac{s-\gamma\sqrt{s}}{s}\Bigr)=F\Bigl(1-\frac{\gamma}{\sqrt{s}}\Bigr)=F(1)+o(1),~~~~~~ s\to\infty.
\eq
Therefore,
\beq \label{3.9}
\frac{1}{\sqrt{s}}\,D_F^{-1}(s,s-\gamma\sqrt{s})=\frac{1}{1+F(1)}~\frac{\Phi(\gamma)}{\varp(\gamma)}+ o(1),~~~~~~s\to\infty,
\eq
and
\beq \label{3.10}
\frac{1}{\sqrt{s}}\,D_F^{-R}(s,s-\gamma\sqrt{s})=\frac{\Phi(\gamma)}{\varp(\gamma)}+O\Bigl(\frac{1}{\sqrt{s}}\Bigr), ~~~~~~s\to\infty.
\eq
 This implies \eqref{eq1} and \eqref{eq2}.
\end{proof}

 A first observation is that the limiting expressions \eqref{eq1} and \eqref{eq2} are similar as for the Erlang B formula in \eqref{3.6},
 and the only difference between the limits of  $D_F$ and $B$ is the factor $1+F(1)$, which incorporates all information about the admission policy.

Let $\bar D_F(s,\lambda)=D_F(s,\lambda+\Omega)$ and $\bar D_F^R(s,\lambda)=D_F^R(s,\lambda+\Omega)$ with $\Omega$ as in \eqref{genC}. Hence,
$\bar D_F(s,\lambda)$ and $\bar D_F^R(s,\lambda)$ are the stationary probability that an arriving customer finds all servers occupied, and the stationary probability that an arriving customers is rejected, respectively, in the system {\it with} retrials using the RTA approximation.
\begin{thm}[QED limits with retrials]\label{thm:qedretrials}
Under condition \eqref{condA}, for $\lambda=s-\gamma\sqrt{s}$, with $\gamma\in (-\infty,\sqrt{s}]$  fixed, and with $\Omega$  defined as in \eqref{genC},
\eqan{
\lim_{s\rightarrow\infty}\frac{\Omega}{\sqrt{s}}=a
}
with $a$ the unique positive solution of the equation
\eqan{
a=\frac{\varp(\gamma-a)}{\Phi(\gamma-a)}.
}
Furthermore,
\begin{align}
 \lim_{s\to \infty}\sqrt{s}\bar D_F(s,s-\gamma\sqrt{s})&=(1+F(1))\frac{\phi(\gamma-a)}{\Phi(\gamma-a)}, \label{eq1x}\\
  \lim_{s\to \infty}\sqrt{s}\bar D_F^R(s,s-\gamma\sqrt{s})&=\frac{\phi(\gamma-a)}{\Phi(\gamma-a)}.\label{eq2x}
\end{align}
\end{thm}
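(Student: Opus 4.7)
The strategy is to lift the generalized Cohen equation to the QED scale, identify and solve its fixed-point limit, and then transfer the convergence to (\ref{eq1x}), (\ref{eq2x}) via Theorem~\ref{thm:qed} applied at the effective load $\tilde\lambda:=\lambda+\Omega$.

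Setting $a_s:=\Omega/\sqrt{s}$, one has $\tilde\lambda=s-\gamma_s\sqrt{s}$ with $\gamma_s:=\gamma-a_s$, so $\tilde\lambda$ is itself in QED form but with an $s$-dependent parameter. Dividing (\ref{genC}) by $\sqrt{s}$ yields
\begin{equation}
a_s=\Bigl(1-\tfrac{\gamma_s}{\sqrt{s}}\Bigr)\,\sqrt{s}\,D_F^R\bigl(s,s-\gamma_s\sqrt{s}\bigr),\qquad \gamma_s=\gamma-a_s.
\label{cohen-rescaled}
\end{equation}
The estimate (\ref{3.10}) reads $\sqrt{s}\,D_F^R(s,s-\tilde\gamma\sqrt{s})=R(\tilde\gamma)+O(1/\sqrt{s})$ with $R(y):=\varp(y)/\Phi(y)$; from its derivation this convergence is uniform in $\tilde\gamma$ on any compact subset of $\dR$, because the Erlang~B asymptotic (\ref{3.6}) is uniform and $F$ is continuous at $1$. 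Taking formal limits in (\ref{cohen-rescaled}) therefore produces the fixed-point equation $a=R(\gamma-a)$.

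To see this limit equation has a unique positive solution, set $h(a):=R(\gamma-a)$. Then $h$ is continuous and strictly increasing with $h(0)=R(\gamma)>0$ and $h(a)-a\to -\gamma$ as $a\to\infty$ (from the Mills ratio asymptotic $R(y)\sim-y$ for $y\to-\infty$), giving at least one positive root when $\gamma>0$. Uniqueness follows by combining (i) $h(a)-a>-\gamma$ for all $a\geq 0$, equivalent to $R(y)+y>0$ and verified by noting that $\varp(y)+y\Phi(y)$ has derivative $\Phi(y)>0$ and limit $0$ at $-\infty$, with (ii) the identity $(h(a)-a)'|_{a=a_0}=a_0\gamma-1$ at any root $a_0$, obtained from $R'(y)=-R(y)(y+R(y))$ and $R(\gamma-a_0)=a_0$; these together rule out more than one sign change of $h(a)-a$.

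The principal technical step is showing $a_s\to a$. One first derives an a priori bound $0\leq a_s\leq M$ uniform in $s$ by combining $D_F^R\leq B$ from Lemma~\ref{lemdec} with the uniform upper estimate on $\sqrt{s}\,B(s,s-\tilde\gamma\sqrt{s})$ implied by (\ref{3.6}), substituted into (\ref{cohen-rescaled}), which forces any solution to remain in a compact set. Given boundedness, extract any subsequential limit $a_{s_k}\to a^*$; along the subsequence $\gamma_{s_k}\to\gamma-a^*$ stays in a compact set, so uniform convergence of $\sqrt{s}\,D_F^R$ on compact $\tilde\gamma$-ranges and continuity of $R$ permit passing to the limit in (\ref{cohen-rescaled}) to obtain $a^*=R(\gamma-a^*)$; uniqueness then forces $a^*=a$. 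Once $a_s\to a$, so that $\gamma_s\to\gamma-a$, the effective load $\tilde\lambda=s-\gamma_s\sqrt{s}$ is in QED form with parameter converging to $\gamma-a$, and another appeal to the uniformly-in-$\tilde\gamma$ versions of (\ref{3.9})--(\ref{3.10}) delivers $\sqrt{s}\,\bar D_F(s,\lambda)=\sqrt{s}\,D_F(s,\tilde\lambda)\to(1+F(1))R(\gamma-a)$ and likewise for $\bar D_F^R$, which is (\ref{eq1x})--(\ref{eq2x}). The main obstacle is the a priori bound on $a_s$: because $\tilde\gamma=\gamma_s$ is itself $s$-dependent and the QED asymptotics are stated pointwise, the uniformity of (\ref{3.10}) on compact $\tilde\gamma$-intervals and the monotonicity of $D_F^R(s,\cdot)$ have to be leveraged to trap $a_s$ between constants not depending on $s$.
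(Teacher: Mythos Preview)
Your overall strategy---rescale Cohen's equation, obtain an $s$-independent bound on $a_s$, then pass to the limit along subsequences using uniqueness of the fixed point---is the same as the paper's. The place where your sketch is incomplete is precisely the step you flag as ``the main obstacle'': the a~priori bound on $a_s$.

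Your proposed route to the bound is circular. You appeal to the estimate (\ref{3.6}) for $\sqrt{s}\,B(s,s-\tilde\gamma\sqrt{s})$, but that estimate is only uniform on compact $\tilde\gamma$-sets, whereas $\tilde\gamma=\gamma_s=\gamma-a_s$ lies in a compact set only once $a_s$ is already known to be bounded. Monotonicity of $D_F^R(s,\cdot)$ does not help here either (and, as the paper notes, it can fail for some admission policies). What actually closes the loop is a \emph{global} pointwise inequality, not a compact-uniform asymptotic: from Proposition~\ref{propp1}(i) and property~(c) at the start of Appendix~\ref{aA} one has $f_{s,F}^R(\delta)\leq f_s(\delta)<f_\infty(\delta)$ for every $\delta\leq\sqrt{s}$, because $f_s$ increases monotonically in $s$ to $f_\infty$. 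Substituting $\delta=\gamma-a_s$ gives $a_s<f_\infty(\gamma-a_s)$, hence $(\gamma-a_s)+f_\infty(\gamma-a_s)>\gamma=(\gamma-a_\infty(\gamma))+f_\infty(\gamma-a_\infty(\gamma))$. Since $\delta\mapsto\delta+f_\infty(\delta)$ is strictly increasing (equivalently $f_\infty'>-1$, see \cite[(21)]{ajl}), this yields $a_s<a_\infty(\gamma)$ for all $s$, which is the needed uniform bound. This is exactly the argument in the paper (equations (\ref{78}) and the surrounding lines).

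A minor remark: your uniqueness argument for the limiting fixed point is more involved than necessary. The same fact $f_\infty'>-1$ gives $\frac{d}{da}\bigl(a-f_\infty(\gamma-a)\bigr)=1+f_\infty'(\gamma-a)>0$, so $a\mapsto a-R(\gamma-a)$ is strictly increasing and has at most one zero.
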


The proof of Theorem \ref{thm:qedretrials} is presented in Section \ref{sec6}. Theorem \ref{thm:qedretrials} shows that the additional load due to retrials $\Omega$, for a system with many servers, is of the order $\sqrt{s}$. In particular, as the number of servers grows large, $\Omega$ is well approximated by $a\sqrt{s}$, where $a$ is a constant that no longer depends on $s$. This also means that for the overall retrial system the arrival rate $\lambda+\Omega$ is approximately $s-(\gamma-a)\sqrt{s}$.

\section{Dimensioning problems}\label{sec:dim}
First consider the situation without retrials, and the problem of finding the arrival rate $\lambda$ such that  the probability $D_F(s,\lambda)$ to find all servers occupied or the probability $D_F^R(s,\lambda)$ that service is denied altogether has a prescribed value. Here the number of servers and the admission policy, embodied by $F$, are assumed to be given.

\noindent{\bf Problem 1} For fixed $s,\eps$, find $\gamma$ such that
\beq \label{eqi}
\sqrt{s}\,D(s,s-\gamma\sqrt{s})=\eps \quad  {\rm with} \quad D=D_F \ {\rm or} \ D_F^R.
\eq

In Section \ref{sec4} it will be shown that, under condition \eqref{condA}, $D_F(s,s-\gamma\sqrt{s})$ decreases strictly from $1$ at $\gamma=\gamma_P$ to $0$ at $\gamma=\sqrt{s}$. Unfortunately, such a result does not hold for $D_F^R$: There are policies $F$ satisfying condition \eqref{condA} such that
$D_F^R(s,s-\gamma\sqrt{s})$ is not monotonic as a function of $\gamma$, see Appendix \ref{aA}. These policies are, however, rather rare. Relevant policies for which
$D_F^R(s,s-\gamma\sqrt{s})$ is monotonic include
\beq
p_k=p\in(0,1), \ k\geq s,
\eq
and, for some $N\geq s$,
\beq\label{4.3a}
p_k=\left\{
      \begin{array}{ll}
        1, & \hbox{$s\leq k\leq N$,} \\
        0, & \hbox{$k\geq N+1$.}
      \end{array}
    \right.
\eq
When $D_F^R(s,s-\gamma\sqrt{s})$ is monotonic, it decreases from $1-P$ at $\gamma=\gamma_P$ to $0$ at $\gamma=\sqrt{s}$.

We have the following result:
\begin{thm}[Unique solutions] \label{thm7} Under condition \eqref{condA},\\
{\rm (i)} Equation \eqref{eqi} with $D=D_F$ has a unique solution $\gamma=\gamma_{s,F}(\eps)$ for any $\eps\in(0,\sqrt{s})$.\\
Assuming further that $D_F^R(s,s-\gamma\sqrt{s})$ is monotonic in $\gamma$,\\
{\rm (ii)}  Equation \eqref{eqi} with $D=D_F^R$ has a unique solution $\gamma=\gamma_{s,F}^R(\eps)$ for any $\eps\in(0,(1-P)\sqrt{s})$.
\end{thm}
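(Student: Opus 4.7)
The plan is to reduce both parts of Theorem \ref{thm7} to an elementary application of the intermediate value theorem: in each case the left-hand side of \eqref{eqi} is to be identified as a continuous, strictly monotone function of $\gamma$ on $(\gamma_P,\sqrt{s}]$, with endpoint values that bracket any prescribed target $\eps$ in the stated interval.

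For part (i), the key ingredient is strict monotonicity of $\lambda \mapsto D_F(s,\lambda)$, which the paper already promises to establish in Section \ref{sec4}. The quickest self-contained argument proceeds from the decomposition in Lemma \ref{lemdec} rewritten as
\begin{equation*}
D_F^{-1}(s,\lambda) \;=\; 1 \;+\; \frac{B^{-1}(s,\lambda)-1}{1+F(\lambda/s)}.
\end{equation*}
Here $B^{-1}(s,\lambda)-1 \geq 0$ is strictly decreasing in $\lambda$ on $(0,\infty)$ (since $B^{-1}(s,\lambda) = \sum_{j=0}^{s}\frac{s!}{(s-j)!}\,\lambda^{-j}$), while $1/(1+F(\lambda/s))$ is positive and strictly decreasing in $\lambda$ (assuming at least one $p_k$ with $k\geq s$ is positive; the all-zero case reduces to Erlang B, for which the conclusion is classical). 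The product of two positive strictly decreasing factors is strictly decreasing, so $D_F^{-1}(s,\lambda)$ is strictly decreasing in $\lambda$, and hence $\gamma \mapsto D_F(s,s-\gamma\sqrt{s})$ is strictly decreasing on $(\gamma_P,\sqrt{s}]$. Combined with the endpoint values $D_F(s,\lambda_P-0)=1$ at $\gamma=\gamma_P$ (already recorded in the paper) and $D_F(s,0)=0$ at $\gamma=\sqrt{s}$ (trivial: no arrivals means no blocking), the map $\gamma \mapsto \sqrt{s}\,D_F(s,s-\gamma\sqrt{s})$ is continuous and strictly decreasing from $\sqrt{s}$ down to $0$. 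The intermediate value theorem then yields exactly one $\gamma_{s,F}(\eps)\in(\gamma_P,\sqrt{s})$ solving \eqref{eqi} for every $\eps\in(0,\sqrt{s})$.

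Part (ii) follows along the same lines, but strict monotonicity must be assumed explicitly because, as the paper points out, it can fail for certain pathological admission policies. Granted the monotonicity hypothesis, the endpoint values are again immediate: $D_F^R(s,\lambda_P-0) = 1-P$ at $\gamma = \gamma_P$ (as quoted in the paper), and $D_F^R(s,0) = 0$ at $\gamma = \sqrt{s}$ (no arrivals means no rejections; equivalently, from \eqref{3.5}, $B^{-1}(s,\lambda)\to\infty$ dominates the numerator while the denominator stays bounded as $\lambda\downarrow 0$). Continuity of $\gamma \mapsto \sqrt{s}\,D_F^R(s,s-\gamma\sqrt{s})$ together with the imposed monotonicity then gives a bijection onto $(0,(1-P)\sqrt{s})$, from which the unique $\gamma_{s,F}^R(\eps)$ is obtained. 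The only genuinely nontrivial step in the whole argument is the strict monotonicity of $D_F$ underlying part (i); this is precisely the content deferred to Section \ref{sec4}, and everything else is intermediate value theorem plus bookkeeping of boundary values.
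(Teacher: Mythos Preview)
Your proof is correct and follows the same overall architecture as the paper's: establish strict monotonicity plus the two endpoint values, then invoke the intermediate value theorem. The endpoint computations and the handling of part (ii) are essentially identical to the paper's Subsection~6.1.

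Where you differ is in the monotonicity argument for part (i). The paper proves this as Proposition~\ref{propp2}(i) via the representation in Lemma~\ref{lemm1}: it writes $g_{s,F}$ in terms of $g_s$, $f_s$ and $H_s$ (equation~\eqref{EC.6}) and then appeals to monotonicity properties of $g_s$ and $f_s$ imported from \cite{ajl}. Your route is more elementary: you work directly from \eqref{3.4} (not Lemma~\ref{lemdec}, incidentally---your identity $D_F^{-1}=1+(B^{-1}-1)/(1+F)$ is just \eqref{3.4} rearranged) and observe that both $B^{-1}(s,\lambda)-1$ and $1/(1+F(\lambda/s))$ are positive and strictly decreasing, the latter because condition~\eqref{condA} forces $p_s>0$ and hence $F$ strictly increasing. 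This bypasses the $H_s$ machinery entirely and needs nothing from \cite{ajl}. The paper's approach, on the other hand, is set up to yield the finer structural results of Section~\ref{sec4} (convexity, the bounds in Proposition~\ref{propp1}, etc.) that are used elsewhere; for Theorem~\ref{thm7} alone your shortcut is cleaner.
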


We next consider Problem 1 in the QED regime, and first introduce some definitions. Let for $\gamma\in(\gamma_P,\sqrt{s})$
\beq \label{4444}
g_{s,F}(\gamma):=\sqrt{s}D_F(s,s-\gamma\sqrt{s}), \quad g_{s,F}^R(\gamma):=\sqrt{s}D_F^R(s,s-\gamma\sqrt{s}).
\eq
Furthermore, define for $\gamma\in\mathbb{R}$,
\beq
g_\infty(\gamma)=\frac{\phi(\gamma)}{\Phi(\gamma)},
\eq
and define
 $\gamma_{\infty,F}(\eps)$ and $\gamma_\infty^R(\eps)$   as the solutions of
\beq \label{defdd}
(1+F(1))g_\infty(\gamma)={\eps}\quad {\rm and} \quad g_\infty(\gamma)={\eps},
 \eq
 respectively. It is well-known, see \cite[Subsection~4.1]{ajl} that $g_\infty(\gamma)$ strictly decreases from $+\infty$ at $\gamma=-\infty$ to $0$ at $\gamma=\infty$, and so both equations in \eqref{defdd} have unique solutions when $\eps>0$.

 In Theorem  \ref{thmasss} below, we give a limit  result
 for $\gamma_{s,F}(\eps)$ and $\gamma_{s,F}^R(\eps)$  as $s\to\infty$ that involves $\gamma_{\infty,F}(\eps)$ and $\gamma_{\infty,F}^R(\eps)$, respectively. For this result, the following observations are made.
 From \eqref{3.4} and \eqref{3.5} we get
\beq
g_{s,F}(\gamma)=\sqrt{s}B(s,s-\gamma\sqrt{s})\frac{1+F(1-\gamma/\sqrt{s})}{1+B(s,s-\gamma\sqrt{s})F(1-\gamma/\sqrt{s})},
\eq
and
 \beq
g_{s,F}^R(\gamma)=\sqrt{s}B(s,s-\gamma\sqrt{s})\frac{1-\frac{\gamma}{\sqrt{s}}(1-\gamma/\sqrt{s})^{-1}F(1-\gamma/\sqrt{s})}{1+B(s,s-\gamma\sqrt{s})F(1-\gamma/\sqrt{s})}.
\eq
Now we have from \cite[Theorem 14]{jagerman},
\beq\label{4999}
\sqrt{s}B(s,s-\gamma\sqrt{s})=g_\infty(\gamma)+\frac{1}{\sqrt{s}}h_\infty(\gamma)+O(s^{-1}),
\eq
where
\beq
h_\infty(\gamma)=-\tfrac13 \Big(\gamma^3+(\gamma^2+2)g_\infty(\gamma)\Big)g_\infty(\gamma),
\eq
and the $O$ in \eqref{4999} holds uniformly in any bounded set of $\gamma$'s. Using \eqref{4999}, together with
\beq
F(1-\tfrac{\gamma}{\sqrt{s}})=F(1)-\frac{\gamma}{\sqrt{s}}F'(1)+O(s^{-1}),
\eq
which holds because of assumption \eqref{condA}, the following result is established upon computation.

\begin{lem}Under condition \eqref{condA},
\begin{align}
g_{s,F}(\gamma)&=(1+F(1))g_\infty(\gamma)+\frac{1}{\sqrt{s}}h_{\infty,F}(\gamma)+O(s^{-1}), \label{4999a}\\
g_{s,F}^R(\gamma)&=g_\infty(\gamma)+\frac{1}{\sqrt{s}}h_{\infty,F}^R(\gamma)+O(s^{-1}),\label{4999b}
\end{align}
where
\begin{align}
h_{\infty,F}(\gamma)&=(1+F(1))h_\infty(\gamma)-(\gamma F'(1)+(1+F(1))F(1))g_\infty(\gamma),\\
h_{\infty,F}^R(\gamma)&=h_\infty(\gamma)-(\gamma +g_\infty(\gamma))g_\infty(\gamma)F(1).
\end{align}
The  $O$ in \eqref{4999a} and \eqref{4999b} holds uniformly in any bounded set of $\gamma$'s.
\end{lem}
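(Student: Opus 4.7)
The plan is to substitute the two asymptotic expansions stated immediately above the lemma into the closed-form representations of $g_{s,F}(\gamma)$ and $g_{s,F}^R(\gamma)$, and then collect terms order by order in $s^{-1/2}$. Writing $\sqrt{s}B(s,s-\gamma\sqrt{s}) = g_\infty(\gamma) + h_\infty(\gamma)/\sqrt{s} + O(s^{-1})$ (so that $B$ itself is of order $s^{-1/2}$) and $F(1-\gamma/\sqrt{s}) = F(1) - \gamma F'(1)/\sqrt{s} + O(s^{-1})$, the common denominator $1 + B(s,s-\gamma\sqrt{s})F(1-\gamma/\sqrt{s})$ expands as $1 + g_\infty(\gamma)F(1)/\sqrt{s} + O(s^{-1})$. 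Inverting via $(1+x)^{-1} = 1 - x + O(x^2)$, legitimate since here $x = O(s^{-1/2})$, yields the reciprocal $1 - g_\infty(\gamma)F(1)/\sqrt{s} + O(s^{-1})$.

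For $g_{s,F}$, the numerator $\sqrt{s}B\cdot(1+F(1-\gamma/\sqrt{s}))$ multiplies out to $g_\infty(\gamma)(1+F(1)) + s^{-1/2}[h_\infty(\gamma)(1+F(1)) - g_\infty(\gamma)\gamma F'(1)] + O(s^{-1})$, and multiplying by the inverted denominator produces an additional cross-term at order $s^{-1/2}$ involving $g_\infty(\gamma)F(1)(1+F(1))$; all products of two $s^{-1/2}$-order terms get absorbed into the $O(s^{-1})$ remainder. This reproduces the stated formula for $h_{\infty,F}$. For $g_{s,F}^R$, a preliminary observation is that $(\gamma/\sqrt{s})(1-\gamma/\sqrt{s})^{-1}F(1-\gamma/\sqrt{s}) = \gamma F(1)/\sqrt{s} + O(s^{-1})$, again by the geometric series, so the rejection numerator expands as $g_\infty(\gamma) + s^{-1/2}[h_\infty(\gamma) - g_\infty(\gamma)\gamma F(1)] + O(s^{-1})$; multiplying by the inverted denominator contributes the further term $-g_\infty(\gamma)^2 F(1)/\sqrt{s}$, which rearranges to exactly $h_{\infty,F}^R(\gamma) = h_\infty(\gamma) - (\gamma + g_\infty(\gamma))g_\infty(\gamma)F(1)$.

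The one step deserving care is the Taylor expansion of $F$ at $1$ with an $O(s^{-1})$ remainder, since $F$ is defined by the power series \eqref{3.3} and one must control its behavior on a neighborhood of $1$. This is exactly what condition \eqref{condA} secures: $P \in [0,1)$ forces the radius of convergence $1/P$ to be strictly greater than $1$, so $F$ is analytic on an open neighborhood of $1$, hence in particular $C^2$ with $F''$ bounded on any compact subinterval. The standard Taylor-with-remainder estimate then delivers the advertised expansion of $F(1-\gamma/\sqrt{s})$ uniformly for $\gamma$ in any compact set. Combining this with the uniformity of the Jagerman expansion \eqref{4999} on bounded $\gamma$-sets and the uniformity of the geometric inversion (the quantities being inverted are bounded in modulus on compacts), one obtains the claimed uniform $O(s^{-1})$ error in both \eqref{4999a} and \eqref{4999b}, completing the proof.
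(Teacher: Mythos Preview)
Your approach is exactly the one the paper takes: the paper simply states that the lemma ``is established upon computation'' after recording the Jagerman expansion \eqref{4999} and the Taylor expansion of $F$ at $1$, and you have supplied that computation in detail, including the justification (via condition \eqref{condA} forcing $1/P>1$) for why $F$ admits a second-order Taylor expansion at $1$ uniformly on compacts. One minor remark: when you write that the cross-term from the inverted denominator ``involves $g_\infty(\gamma)F(1)(1+F(1))$'', the actual cross-term is $-g_\infty(\gamma)^2(1+F(1))F(1)$, since the leading numerator $g_\infty(\gamma)(1+F(1))$ is multiplied by $-g_\infty(\gamma)F(1)/\sqrt{s}$; make sure your final expression for $h_{\infty,F}$ records this extra factor of $g_\infty(\gamma)$.
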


\begin{thm}[Asymptotic dimensioning without retrials]\label{thmasss}Under condition \eqref{condA},
\begin{align}
\gamma_{s,F}(\eps)&=\gamma_{\infty,F}(\eps)+ \frac{1}{\sqrt{s}}\eta_{\infty,F}(\eps)+O(s^{-1}),\label{416}\\
\gamma_{s,F}^R(\eps)&=\gamma_\infty^R(\eps)+\frac{1}{\sqrt{s}}\eta_{\infty,F}^R(\eps)+O(s^{-1})\label{417},
\end{align}
with
\begin{align}
\eta_{\infty,F}(\eps)&=-\frac{h_{\infty,F}(\gamma_{\infty,F}(\eps))}{(1+F(1))g'_{\infty}(\gamma_{\infty,F}(\eps))},\label{418}\\
\eta_{\infty,F}^R(\eps)&=-\frac{h_{\infty,F}^R(\gamma_{\infty}^R(\eps))}{g'_{\infty}(\gamma_{\infty}^R(\eps))}. \label{419}
\end{align}
\end{thm}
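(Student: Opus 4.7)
The plan is to apply a standard implicit-function argument to the defining equation $g_{s,F}(\gamma_{s,F}(\eps))=\eps$, exploiting the expansion for $g_{s,F}$ supplied by the preceding lemma together with the fact that $g_\infty$ is real analytic with strictly negative derivative on all of $\mathbb R$. The limit $\gamma_{\infty,F}(\eps)$ is the unique root of the reduced equation $(1+F(1))g_\infty(\gamma)=\eps$, and the nondegeneracy condition $g'_\infty(\gamma_{\infty,F}(\eps))\neq 0$ is precisely what makes the inversion routine and the correction term $\eta_{\infty,F}(\eps)$ well defined in \eqref{418}.

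First I would show $\gamma_{s,F}(\eps)\to\gamma_{\infty,F}(\eps)$ as $s\to\infty$. By \eqref{4999a} we have $g_{s,F}(\gamma)=(1+F(1))g_\infty(\gamma)+O(s^{-1/2})$ uniformly on compacts, so for $s$ large $g_{s,F}$ is strictly decreasing in a fixed neighbourhood of $\gamma_{\infty,F}(\eps)$, and comparing the values of $g_{s,F}$ at $\gamma_{\infty,F}(\eps)\pm\delta$ with $\eps$ produces a bracket forcing $\gamma_{s,F}(\eps)=\gamma_{\infty,F}(\eps)+o(1)$. Next, set $\delta_s=\gamma_{s,F}(\eps)-\gamma_{\infty,F}(\eps)$ and substitute into \eqref{4999a}, Taylor-expanding $g_\infty$ to first order and $h_{\infty,F}$ to zeroth order around $\gamma_{\infty,F}(\eps)$. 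Using $(1+F(1))g_\infty(\gamma_{\infty,F}(\eps))=\eps$ to cancel the leading terms yields
\begin{equation*}
0=(1+F(1))g'_\infty(\gamma_{\infty,F}(\eps))\,\delta_s+\tfrac{1}{\sqrt{s}}h_{\infty,F}(\gamma_{\infty,F}(\eps))+O(\delta_s^2)+O(s^{-1}).
\end{equation*}
A bootstrap then gives $\delta_s=O(s^{-1/2})$, at which point $O(\delta_s^2)=O(s^{-1})$ can be absorbed into the remainder, and solving the resulting linear equation for $\delta_s$ produces exactly \eqref{416} with $\eta_{\infty,F}(\eps)$ as in \eqref{418}. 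The derivation of \eqref{417} is verbatim the same with $1+F(1)$ replaced by $1$ and $h_{\infty,F}$, $g_{s,F}$ replaced by $h_{\infty,F}^R$, $g_{s,F}^R$, invoking Theorem \ref{thm7}(ii) (and the stated monotonicity hypothesis on $D_F^R(s,s-\gamma\sqrt{s})$) to ensure $\gamma_{s,F}^R(\eps)$ is well defined.

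The main obstacle is essentially bookkeeping of the $O$-terms: one must verify that the $O(s^{-1})$ error in the lemma's expansion, as well as the first- and second-order Taylor remainders for $g_\infty$ and $h_{\infty,F}$, hold \emph{uniformly} on a fixed compact neighbourhood of $\gamma_{\infty,F}(\eps)$. Uniformity for the lemma is explicitly asserted there, and the Taylor remainders for $g_\infty$ and $h_{\infty,F}$ are automatic since both are smooth functions built from $\phi$, $\Phi$, and (for $h_{\infty,F}$) the constants $F(1)$ and $F'(1)$. A small subtlety worth pointing out is that the bootstrap requires knowing a priori that $\delta_s$ stays inside a fixed neighbourhood of $\gamma_{\infty,F}(\eps)$ on which $g'_\infty$ is bounded away from zero; this follows from the qualitative convergence established in the opening step.
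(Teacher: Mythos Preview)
Your proposal is correct and follows essentially the same route as the paper's own proof: establish $\gamma_{s,F}(\eps)\to\gamma_{\infty,F}(\eps)$ by a bracketing argument using the uniform convergence $g_{s,F}\to(1+F(1))g_\infty$, then bootstrap via Taylor expansion---first using the weakened form $g_{s,F}(\gamma)=(1+F(1))g_\infty(\gamma)+O(s^{-1/2})$ to obtain $\delta_s=O(s^{-1/2})$, and then the full expansion \eqref{4999a} to extract the coefficient $\eta_{\infty,F}(\eps)$. The paper invokes global monotonicity of $g_{s,F}$ (Proposition~\ref{propp2}(i)) rather than local monotonicity deduced from the expansion, and it cites \cite[(21)]{ajl} for the fact that $g'_\infty$ is bounded away from zero on compacts, but these are minor presentational differences.
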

Theorem \ref{thmasss} gives the limits of  $\gamma_{s,F}(\eps)$ and $\gamma_{s,F}^R(\eps)$ as $s\to\infty$, together with first-order corrections $\eta_{\infty,F}(\eps)$ and $\eta_{\infty,F}^R(\eps)$ that are simple functions of the limits $\gamma_{\infty,F}(\eps)$ and $\gamma_\infty^R(\eps)$. It is here instrumental to note that
\beq
g_\infty'(\gamma)=-g_\infty(\gamma)(\gamma+g_\infty(\gamma)).
\eq
The proof of Theorem \ref{thmasss} will be given in Section \ref{proofthm}.

We next discuss the numerical experiments that we conducted to illustrate the analytical results.
Remember that the objective of Problem 1 is to determine maximal sustainable load $\lambda$  such that
the rejection probability $D_F^R(s,\lambda)$ is below a threshold $\eps_s:=\eps/\sqrt{s}$.
In Section \ref{sec:asy} we have denoted the true maximal load by $\lambda_{\rm opt}$, and we have explained the concepts of square-root staffing and asymptotic dimensioning, in order to obtain accurate estimates of $\lambda_{\rm opt}$ that are asymptotically sharp in the QED regime.

The conventional
square-root staffing rule is to
use the QED approximation $\sqrt{s}D_F^R(s,\lambda)\approx D_*(\gamma)$,
obtain the solution to $D_*(\gamma)=\eps_s$, say $\gamma_*$, and then prescribe the
load as $\lambda_*=s-\gamma_*\sqrt{s}$. Theorem \ref{thmasss} allows for refined-square root staffing based on a better
QED approximation $\sqrt{s}D_F^R(s,\lambda)\approx D_\bullet(\gamma)$ and the solution $\gamma_\bullet$ to  $D_\bullet(\gamma)=\eps_s$.

For the asymptotic dimensioning sketched above and in Section \ref{sec:asy}, applied to Problem 1, we identify the following key functions and  parameters:
\begin{align}
D_*(\gamma) &=\frac{\phi(\gamma)}{\Phi(\gamma)},\\
D_\bullet(\gamma) &=D_*(\gamma)+\frac{1}{\sqrt{s}}h_{\infty,F}^R(\gamma),\\
\gamma_*&=\gamma_{\infty}^R(\eps),\\
\gamma_\bullet&=\gamma_*+ \frac{1}{\sqrt{s}}\eta_{\infty,F}^R(\eps),
\end{align}
and using the square-root rule $\lambda=s-\gamma\sqrt{s}$,
\begin{align}
\lambda_\star&=s-\gamma_*\sqrt{s},\\
\lambda_\bullet&=s-\gamma_\bullet\sqrt{s}=\lambda_\star+r_\bullet,
\end{align}
with
\begin{align}
r_\bullet=\frac{h_{\infty,F}^R(\gamma_*)}{g'_{\infty}(\gamma_*)}.
\end{align}
Table \ref{table:t1} presents results for the admission policy $p_k=p=0.1$, $k\geq s$, and for Problem 1 with $s=100$ servers.
Note that $|\lambda_{{\rm opt}}-\lambda_\bullet|$ is always less than $0.1$, and how the refinements $r_\bullet$ lead to much sharper estimates of the true optimal values.
\begin{table}[h]\centering
$
\begin{array}{ccccccc}
 \text{$\eps$} & \text{$\lambda_{\rm opt}$}  & \text{$\lambda_* $} & \text{$\lambda_\bullet$} & \text{$r_\bullet$} & \text{$\sqrt{s}D_F^R(s,\lambda_*)$} & \text{$\sqrt{s}D_F^R(s,\lambda_\bullet)$} \\ \hline
 \text{    0.010} & \text{   75.324} & \text{   72.836} & \text{   75.409} & \text{    2.573} & \text{    0.004} & \text{    0.010} \\
 \text{    0.020} & \text{   77.554} & \text{   75.504} & \text{   77.621} & \text{    2.117} & \text{    0.011} & \text{    0.020} \\
 \text{    0.030} & \text{   78.996} & \text{   77.201} & \text{   79.053} & \text{    1.852} & \text{    0.018} & \text{    0.030} \\
 \text{    0.040} & \text{   80.096} & \text{   78.479} & \text{   80.146} & \text{    1.667} & \text{    0.026} & \text{    0.041} \\
 \text{    0.050} & \text{   80.999} & \text{   79.519} & \text{   81.045} & \text{    1.525} & \text{    0.034} & \text{    0.051} \\
 \text{    0.060} & \text{   81.774} & \text{   80.405} & \text{   81.816} & \text{    1.411} & \text{    0.043} & \text{    0.061} \\
 \text{    0.070} & \text{   82.458} & \text{   81.181} & \text{   82.497} & \text{    1.315} & \text{    0.052} & \text{    0.071} \\
 \text{    0.080} & \text{   83.073} & \text{   81.876} & \text{   83.110} & \text{    1.234} & \text{    0.061} & \text{    0.081} \\
 \text{    0.090} & \text{   83.636} & \text{   82.507} & \text{   83.671} & \text{    1.164} & \text{    0.071} & \text{    0.091} \\
 \text{    0.100} & \text{   84.157} & \text{   83.088} & \text{   84.190} & \text{    1.102} & \text{    0.080} & \text{    0.101} \\
 \vspace{0.05mm}
\end{array}
$
 \caption{Results for the admission policy $p_k=p=0.1$, $k\geq s$, and for Problem 1 with $D=D_F^R$ and $s=100$ servers.}    \label{table:t1}
\end{table}

Table \ref{table:t1a} presents results for the same situation, except with $p_k=p=0.5$, $k\geq s$. This control policy is thus more lenient, in the sense that when all servers are occupied, 50\% (instead of 10\%) of the arrivals is admitted. Note that despite the fact that the refinements $r_\bullet$ are larger, and $|\lambda_{{\rm opt}}-\lambda_\bullet|$ is always less than 1, the estimates are less accurate. Based on this example, and many other numerical experiments not reported here, we conclude that square-root staffing becomes less accurate for systems with admission control policies that allow more customers to enter. Such systems really benefit from the refined staffing rules.

\begin{table}[h]\centering
$
\begin{array}{ccccccc}
 \text{$\eps$} & \text{$\lambda_{\rm opt}$}  & \text{$\lambda_* $} & \text{$\lambda_\bullet$} & \text{$r_\bullet$} & \text{$\sqrt{s}D_F^R(s,\lambda_*)$} & \text{$\sqrt{s}D_F^R(s,\lambda_\bullet)$} \\ \hline
 \text{    0.010} & \text{   75.910} & \text{   72.836} & \text{   76.298} & \text{    3.462} & \text{    0.003} & \text{    0.011} \\
 \text{    0.020} & \text{   78.162} & \text{   75.504} & \text{   78.510} & \text{    3.006} & \text{    0.009} & \text{    0.022} \\
 \text{    0.030} & \text{   79.619} & \text{   77.201} & \text{   79.942} & \text{    2.741} & \text{    0.015} & \text{    0.033} \\
 \text{    0.040} & \text{   80.730} & \text{   78.479} & \text{   81.035} & \text{    2.556} & \text{    0.022} & \text{    0.043} \\
 \text{    0.050} & \text{   81.642} & \text{   79.519} & \text{   81.933} & \text{    2.414} & \text{    0.029} & \text{    0.054} \\
 \text{    0.060} & \text{   82.425} & \text{   80.405} & \text{   82.705} & \text{    2.300} & \text{    0.037} & \text{    0.064} \\
 \text{    0.070} & \text{   83.116} & \text{   81.181} & \text{   83.386} & \text{    2.204} & \text{    0.045} & \text{    0.074} \\
 \text{    0.080} & \text{   83.738} & \text{   81.876} & \text{   83.999} & \text{    2.123} & \text{    0.053} & \text{    0.084} \\
 \text{    0.090} & \text{   84.307} & \text{   82.507} & \text{   84.560} & \text{    2.053} & \text{    0.061} & \text{    0.095} \\
 \text{    0.100} & \text{   84.832} & \text{   83.088} & \text{   85.078} & \text{    1.991} & \text{    0.070} & \text{    0.105} \\
 \vspace{0.05mm}
\end{array}
$
 \caption{Results for the admission policy $p_k=p=0.5$, $k\geq s$, and for Problem 1 with $s=100$ servers.}    \label{table:t1a}
\end{table}

Let us now turn to the same dimensioning problem, but then for the system {\it with} retrials. \\
{\bf Problem 2} For fixed $s,\eps$, find $\lambda$ such that
\beq \label{eqprob3}
\sqrt{s}D_F^R(s,\lambda+\Omega)=\eps
 \eq
with $\Omega$ defined as in \eqref{genC}.\\

%et $\gamma_{s,F}(\eps)$ and $\gamma_{s,F}^R(\eps)$ denote the solutions $\gamma $ to \eqref{eqi} when $D=D_F$ and $D=D_F^R$, respectively. Let $\gamma=\gamma_\infty(\eps)$ and $\gamma=\gamma_\infty^R(\eps)$  denote the solutions $\gamma$ to
%\beq \label{}
%\frac{\phi(\gamma)}{\Phi(\gamma)}={\eps}\quad
%{\rm and} \quad
%\frac{\phi(\gamma-\eps)}{\Phi(\gamma-\eps)}={\eps}.
% \eq
%
%
%\begin{problem}\label{prob3}
%For fixed $s,\eps$, find $\lambda$ such that
%\beq \label{eqprob3}
%D_F^R(s,\lambda+\Omega)=\frac{\eps}{\sqrt{s}}
% \eq
%with $\Omega$ defined as in \eqref{genC}.
%\end{problem}

%Problem 2 can be reformulated as, for fixed $s,\eps$, finding $\gamma$ such that
%$\sqrt{s}\bar{D}_F^R(s,s-\gamma\sqrt{s})={\eps}$. Using \eqref{eq2x}, we know that as $s\to \infty$, this inversion problem boils down to solving
% \beq \label{}
%\frac{\phi(\gamma-\eps)}{\Phi(\gamma-\eps)}={\eps}.
% \eq
%

We only consider the case $D_F^R$ in \eqref{eqprob3}, but results for $D_F$ can be obtained in a similar manner.
Let us first bring the generalized Cohen equation \eqref{genC} in a form that is amenable for analysis in the QED regime.
Write $\Omega=a\sqrt{s}$ so that \eqref{genC} becomes
\beq \label{5.2}
a\sqrt{s}=(s-(\gamma-a)\sqrt{s})\,D_F^R(s,s-(\gamma-a)\sqrt{s}).
\eq
With $f_{s,F}^R$ defined as
\beq \label{4.4}
f_{s,F}^R(\gamma):=\Bigl(1-\frac{\gamma}{\sqrt{s}}\Bigr)\,g_{s,F}^R(\gamma)=\sqrt{s} \Bigl(1-\frac{\gamma}{\sqrt{s}}\Bigr)\,D_F^R(s,s-\gamma\sqrt{s}),
\eq
we can write (\ref{5.2}) concisely as
\beq \label{5.3}
a=f_{s,F}^R(\gamma-a)
\eq
in which $\gamma$ is given and $a$ is to be solved.
%We exclude here the case $p_k=1$, $k\geq s$, so that $f_{s,F}^R(\delta)>0$, $\gamma_P<\delta<\sqrt{s}$, also in the case that $P$ in (\ref{4.1}) equals 1, see Proposition~\ref{propp1}.

%
%We define for $\gamma\in (\gamma_P,\sqrt{s}]$,
%\beq \label{4.3}
%f_{s,F}(\gamma):=\Bigl(1-\frac{\gamma}{\sqrt{s}}\Bigr)\,g_{s,F}(\gamma):=
%\sqrt{s}\Bigl(1-\frac{\gamma}{\sqrt{s}}\Bigr)\,D_F(s,s-\gamma\sqrt{s}),
%\eq
%and
%\beq \label{4.4}
%f_{s,F}^R(\gamma):=\Bigl(1-\frac{\gamma}{\sqrt{s}}\Bigr)\,g_{s,F}^R(\gamma):=\sqrt{s} \Bigl(1-\frac{\gamma}{\sqrt{s}}\Bigr)\,D_F^R(s,s-\gamma\sqrt{s}),
%\eq
%respectively.

We thus have $D_F^R(s,\lambda+\Omega)=\eps/\sqrt{s}$ if and only if
\beq \label{7.1}
g_{s,F}^R(\gamma-a_{s,F}(\gamma))=\eps,
\eq
%Similarly, we have $\lambda(1-D_F^R(s,\lambda+\Omega))=\eps$ if and only if
%\beq \label{7.2}
%(s-\gamma\sqrt{s})\Bigl(1-\frac{1}{\sqrt{s}}\,g_{s,F}^R(\gamma-a_{s,F}(\gamma))\Bigr)=\eps.
%\eq
where  $a=a_{s,F}(\gamma)$ solves for $\gamma>0$ the equation \eqref{5.3}.

Using $f_{s,F}^R(\delta)=(1-\delta/\sqrt{s})\,g_{s,F}^R(\delta)$, the equation in (\ref{7.1}) takes the form
\beq \label{7.4}
\frac{a}{1-(\gamma-a)/\sqrt{s}}=\eps,
\eq
where $a=a_{s,F}(\gamma)$. This gives
\beq \label{7.5}
a=\frac{1-\gamma/\sqrt{s}}{1-\eps/\sqrt{s}}\,\eps,
\eq
and therefore
\beq \label{7.6}
\gamma-a=\frac{\gamma-\eps}{1-\eps/\sqrt{s}}.
\eq
%Hence, we first solve $g_{s,F}^R(\delta)=\eps$, see (\ref{7.1}), and then we take $\gamma$ such that the right-hand side of (\ref{7.6}) equals $\delta$, i.e.
%\beq \label{7.7}
%\gamma=\eps+\delta-\delta\,\eps/\sqrt{s}.
%\eq
%Note that $g_{s,F}^R$ does not need to be monotonic, and so the solution of the equation $g_{s,F}^R(\delta)=\eps$ is not necessarily unique. Furthermore, $0\leq g_{s,F}^R(\delta)\leq\sqrt{s}$, which implies that no solution of the equation $g_{s,F}^R(\delta)=\eps$ exists when $\eps>\sqrt{s}$. We have identified two instances where $g_{s,F}^R$ is monotonically decreasing, viz.\ in Proposition~\ref{propp3} and in (\ref{4.21}).
Hence, we can solve Problem 2 by first solving $\delta=\delta_{s,F}^R(\eps)$ from
\beq \label{thirty}
g_{s,F}^R(\delta)=\eps,
\eq
see (\ref{7.1}),
and then setting $(\gamma-\eps)/(1-\eps\sqrt{s})=\delta$, i.e.,
\beq \label{7.7}
\gamma=\gamma_{s,F}^R(\eps)=\delta+\eps-\delta\eps/\sqrt{s};\quad \delta=\delta_{s,F}^R(\eps).
\eq
As to \eqref{thirty} we operate under the assumption of monotonicity of $D_F^R$ as made in the beginning of this section.

Denote by $\delta=\delta_\infty(\eps)$ the solution of
\beq
g_\infty(\delta)=\eps.
\eq
Observe that $\delta_\infty(\eps)=\gamma_\infty^R(\eps)$, see \eqref{defdd}. Then we have by Theorem \ref{thmasss}, \eqref{7.5} and \eqref{7.6} that
\beq
\delta_{s,F}^R(\eps)=\delta_\infty(\eps)-\frac{1}{\sqrt{s}}\frac{h_{\infty,F}^R(\delta_\infty(\eps))}{g_\infty'(\delta_\infty(\eps))}+O(s^{-1}).
\eq
Using this in \eqref{7.7}, we arrive at the following result.

\begin{thm}[Asymptotic dimensioning with retrials]\label{thmasss2} Under condition \eqref{condA}, and assuming that $D_F^R(s,s-\gamma\sqrt{s})$ is monotonic in $\gamma$,
\begin{align}
%\gamma_{s,F}(\eps)&=\gamma_{\infty,F}(\eps)+ \frac{1}{\sqrt{s}}\eta_{\infty,F}(\eps)+O(s^{-1}),\\
\gamma_{s,F}^R(\eps)&=\eps+\delta_\infty(\eps)+\frac{1}{\sqrt{s}}\theta_{\infty,F}^R(\eps)+O(s^{-1}),
\end{align}
with
\begin{align}
%\eta_{\infty,F}(\eps)&=-\frac{h_{\infty,F}(\gamma_{\infty,F}(\eps))}{(1+F(1))g'_{\infty}(\gamma_{\infty,F}(\eps))},\\
\theta_{\infty,F}^R(\eps)&=-\delta_\infty(\eps)\eps-\frac{h_{\infty,F}^R(\delta_{\infty}(\eps))}{g'_{\infty}(\delta_{\infty}(\eps))}.
\end{align}
\end{thm}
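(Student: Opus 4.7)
The plan is to exploit the algebraic reduction of Problem 2 to Problem 1 laid out in the derivation immediately preceding the theorem statement, and then apply Theorem \ref{thmasss}(ii). The reduction goes as follows. The constraint $\sqrt{s}D_F^R(s,\lambda+\Omega)=\eps$ combined with the generalized Cohen equation \eqref{genC}, rewritten through the parametrisations $\lambda=s-\gamma\sqrt{s}$ and $\Omega=a\sqrt{s}$, becomes the pair $a=f_{s,F}^R(\gamma-a)$ and $g_{s,F}^R(\gamma-a)=\eps$. Using $f_{s,F}^R(\delta)=(1-\delta/\sqrt{s})g_{s,F}^R(\delta)$, this pair collapses via \eqref{7.4}--\eqref{7.6} to the relation $\gamma-a=(\gamma-\eps)/(1-\eps/\sqrt{s})$. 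Setting $\delta:=\gamma-a$, the problem thus splits into (a) solving $g_{s,F}^R(\delta)=\eps$ for $\delta=\delta_{s,F}^R(\eps)$, which is unique by Theorem \ref{thm7}(ii) under the monotonicity hypothesis, and (b) recovering $\gamma$ through the explicit inverse relation \eqref{7.7}, namely $\gamma=\delta+\eps-\delta\eps/\sqrt{s}$.

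Next I would observe that $\delta_\infty(\eps)$ and $\gamma_\infty^R(\eps)$ coincide, since both are by definition the unique root of $g_\infty(\cdot)=\eps$; likewise $\delta_{s,F}^R(\eps)$ coincides with the quantity $\gamma_{s,F}^R(\eps)$ of Problem 1 (with $D=D_F^R$). Theorem \ref{thmasss}(ii) therefore supplies the expansion
\begin{equation}
\delta_{s,F}^R(\eps)=\delta_\infty(\eps)+\frac{1}{\sqrt{s}}\,\eta_{\infty,F}^R(\eps)+O(s^{-1}),
\end{equation}
with $\eta_{\infty,F}^R(\eps)=-h_{\infty,F}^R(\delta_\infty(\eps))/g'_\infty(\delta_\infty(\eps))$.

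Plugging this expansion into $\gamma=\delta+\eps-\delta\eps/\sqrt{s}$ produces the constant-order contribution $\delta_\infty(\eps)+\eps$. The $1/\sqrt{s}$-order contribution is $\eta_{\infty,F}^R(\eps)-\delta_\infty(\eps)\eps$: the $s^{-1/2}$ correction in $\delta$ multiplied by the prefactor $\eps/\sqrt{s}$ contributes only at order $s^{-1}$ and is absorbed into the remainder. Collecting terms gives
\begin{equation}
\theta_{\infty,F}^R(\eps)=\eta_{\infty,F}^R(\eps)-\delta_\infty(\eps)\eps=-\delta_\infty(\eps)\eps-\frac{h_{\infty,F}^R(\delta_\infty(\eps))}{g'_\infty(\delta_\infty(\eps))},
\end{equation}
exactly as claimed.

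The only point that requires real care is the bookkeeping of remainders in the last step, which implicitly uses the uniform-in-$\gamma$ validity of the $O(s^{-1})$ term in \eqref{4999b} on a small neighbourhood of $\delta_\infty(\eps)$; this uniformity is already part of Lemma stating \eqref{4999a}--\eqref{4999b} and is what underpins Theorem \ref{thmasss}(ii). Beyond that, no new analytic ingredient is needed: the proof is a substitution of an asymptotic expansion into an explicit algebraic inversion formula.
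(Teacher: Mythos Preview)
Your proposal is correct and follows essentially the same route as the paper: the argument given in the text immediately preceding the theorem statement is precisely to reduce Problem~2 to solving $g_{s,F}^R(\delta)=\eps$ via the algebraic relation \eqref{7.7}, identify $\delta_\infty(\eps)=\gamma_\infty^R(\eps)$, invoke Theorem~\ref{thmasss}(ii) for the expansion of $\delta_{s,F}^R(\eps)$, and substitute. Your remark about the uniform $O(s^{-1})$ in \eqref{4999b} is apt and matches what the paper relies on implicitly.
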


For the asymptotic dimensioning scheme, applied to Problem 1 {\it with retrials}, we identify the following key functions and  parameters:
\begin{align}
%D_F^*(\gamma) &=(1+F(1))\frac{\phi(\gamma)}{\Phi(\gamma)},\\
%D_F^\bullet(\gamma) &=D_F^*(\gamma)+\frac{1}{\sqrt{s}}h_{\infty,F}(\gamma),\\
\gamma_*&=\eps+\delta_\infty(\eps),\\
\gamma_\bullet&=\gamma_* + \frac{1}{\sqrt{s}}\theta_{\infty,F}^R(\eps),
\end{align}
and using the square-root rule $\lambda=s-\gamma\sqrt{s}$,
\begin{align}
\lambda_\star&=s-\gamma_*\sqrt{s},\\
\lambda_\bullet&=s-\gamma_\bullet\sqrt{s}=\lambda_\star+r_\bullet,
\end{align}
with
\begin{align}
r_\bullet=(\gamma_*-\eps)\eps+\frac{h_{\infty,F}^R(\gamma_*-\eps)}{g'_{\infty}(\gamma_*-\eps)}.
\end{align}
Table \ref{table:t2} presents results for the admission policy $p_k=p=0.1$, $k\geq s$, and for Problem 1 with retrials and $s=100$ servers. Table  \ref{table:t2b} displays the results for $p=0.5$.

\begin{table}[h]\centering
$
\begin{array}{ccccccc}
 \text{$\eps$} & \text{$\lambda_{\rm opt}$}  & \text{$\lambda_* $} & \text{$\lambda_\bullet$} & \text{$r_\bullet$} & \text{$\sqrt{s}D_F^R(s,\lambda_*)$} & \text{$\sqrt{s}D_F^R(s,\lambda_\bullet)$} \\ \hline
 \text{    0.010} & \text{   75.249} & \text{   72.736} & \text{   75.336} & \text{    2.600} & \text{    0.004} & \text{    0.010} \\
 \text{    0.020} & \text{   77.399} & \text{   75.304} & \text{   77.470} & \text{    2.166} & \text{    0.010} & \text{    0.020} \\
 \text{    0.030} & \text{   78.759} & \text{   76.901} & \text{   78.822} & \text{    1.921} & \text{    0.018} & \text{    0.031} \\
 \text{    0.040} & \text{   79.775} & \text{   78.079} & \text{   79.832} & \text{    1.753} & \text{    0.025} & \text{    0.041} \\
 \text{    0.050} & \text{   80.594} & \text{   79.019} & \text{   80.647} & \text{    1.628} & \text{    0.034} & \text{    0.051} \\
 \text{    0.060} & \text{   81.283} & \text{   79.805} & \text{   81.333} & \text{    1.528} & \text{    0.042} & \text{    0.061} \\
 \text{    0.070} & \text{   81.880} & \text{   80.481} & \text{   81.929} & \text{    1.447} & \text{    0.051} & \text{    0.071} \\
 \text{    0.080} & \text{   82.409} & \text{   81.076} & \text{   82.455} & \text{    1.379} & \text{    0.059} & \text{    0.081} \\
 \text{    0.090} & \text{   82.884} & \text{   81.607} & \text{   82.929} & \text{    1.321} & \text{    0.068} & \text{    0.091} \\
 \text{    0.100} & \text{   83.315} & \text{   82.088} & \text{   83.359} & \text{    1.271} & \text{    0.077} & \text{    0.101} \\
 \vspace{0.05mm}
\end{array}
$
 \caption{Results for the admission policy $p_k=p=0.1$, $k\geq s$, and for Problem 2 with retrials and $s=100$ servers.}    \label{table:t2}
\end{table}

\begin{table}[h]\centering
$
\begin{array}{ccccccc}
 \text{$\eps$} & \text{$\lambda_{\rm opt}$}  & \text{$\lambda_* $} & \text{$\lambda_\bullet$} & \text{$r_\bullet$} & \text{$\sqrt{s}D_F^R(s,\lambda_*)$} & \text{$\sqrt{s}D_F^R(s,\lambda_\bullet)$} \\ \hline
 \text{    0.010} & \text{   75.834} & \text{   72.736} & \text{   76.225} & \text{    3.489} & \text{    0.003} & \text{    0.011} \\
 \text{    0.020} & \text{   78.006} & \text{   75.304} & \text{   78.359} & \text{    3.055} & \text{    0.009} & \text{    0.022} \\
 \text{    0.030} & \text{   79.380} & \text{   76.901} & \text{   79.711} & \text{    2.810} & \text{    0.015} & \text{    0.033} \\
 \text{    0.040} & \text{   80.407} & \text{   78.079} & \text{   80.721} & \text{    2.642} & \text{    0.021} & \text{    0.043} \\
 \text{    0.050} & \text{   81.234} & \text{   79.019} & \text{   81.536} & \text{    2.516} & \text{    0.028} & \text{    0.054} \\
 \text{    0.060} & \text{   81.930} & \text{   79.805} & \text{   82.222} & \text{    2.417} & \text{    0.036} & \text{    0.064} \\
 \text{    0.070} & \text{   82.534} & \text{   80.481} & \text{   82.817} & \text{    2.336} & \text{    0.043} & \text{    0.074} \\
 \text{    0.080} & \text{   83.068} & \text{   81.076} & \text{   83.344} & \text{    2.268} & \text{    0.051} & \text{    0.085} \\
 \text{    0.090} & \text{   83.548} & \text{   81.607} & \text{   83.817} & \text{    2.210} & \text{    0.059} & \text{    0.095} \\
 \text{    0.100} & \text{   83.984} & \text{   82.088} & \text{   84.248} & \text{    2.160} & \text{    0.067} & \text{    0.105} \\
 \vspace{0.05mm}
\end{array}
$
 \caption{Results for the admission policy $p_k=p=0.5$, $k\geq s$, and for Problem 2 with retrials and $s=100$ servers.}    \label{table:t2b}
\end{table}

\section{Bistability}\label{sec:stab}
In Section \ref{sec:dim} we have considered dimensioning problems that were formulated directly in
terms of the probabilities $D_F$ and $D_F^R$, and we have seen that these problems admit generically one solution when the prescribed value for $D_F$ or $D_F^R$ is in the appropriate range. The situation is different when dimensioning problems of a more complicated nature are considered. In this section we consider carried traffic quantities $\lambda(1-D(s,\lambda))$ with $D=D_F$ or $D_F^R$, without and with retrials, and we will see that the corresponding dimensioning problems may have two solutions. One could refer to this situation as {\it bistability}, which has also been observed in certain loss networks with alternative two-link routing \cite{bistability}. The bistability stresses the fact that sample paths of the Markov process tend to be concentrated around two relatively stable points of the system.

 Note that $\lambda(1-D_F(s,\lambda))$ is the rate of arrivals that enter the system in one attempt, and $\lambda(1-D_F^R(s,\lambda))$  is the rate of arrivals that pass the system, possibly after having to wait in the queue that arises when all servers are occupied.

 We first consider the system without retrials. \\
\noindent {\bf Problem 3}
For fixed $s,\eps$, find $\lambda$ such that
\beq \label{eqii} \lambda(1-D(s,\lambda))=\eps \quad {\rm with} \quad D=D_F \ {\rm or} \ D_F^R.
\eq
%Find $\gamma$ such that
%\bi{(ii)0}
%\ITEM{(i)} $\sqrt{s}\,D(s,s-\gamma\sqrt{s})=\eps$ with $D=D_F$ or $D_F^R$,
%\ITEM{(ii)} $\gamma+\sqrt{s}\Bigl(1-\dfrac{\gamma}{\sqrt{s}}\Bigr)\,D_F^R(s,s-\gamma(\sqrt{s})=\dfrac{s-\eps} {\sqrt{s}}\,$,
%\ei
%Here we have set $\lambda=s-\gamma\sqrt{s}$ and  the equation in \eqref{eqii} follows upon rewriting the equation $\lambda(1-D_F^R(s,\lambda))=\eps$ accordingly.
In Section \ref{sec4} we prove the following result.

\begin{thm}[(Non-)uniqueness of solutions]\label{thm12} Under condition \eqref{condA},\\
{\rm (i)} Equation \eqref{eqii} with $D=D_F$ has at least two solutions $\lambda\in(0,\lambda_P)$ when $\eps>0$ is sufficiently small.\\
{\rm (ii)}  Equation \eqref{eqii} with $D=D_F^R$ has a unique solution $\lambda\in(0,\lambda_P)$  when $\eps\in(0,s)$.
\end{thm}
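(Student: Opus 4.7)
My plan is to analyze the two carried-traffic functions $h(\lambda)=\lambda(1-D_F(s,\lambda))$ and $g(\lambda)=\lambda(1-D_F^R(s,\lambda))$ on the stability interval $(0,\lambda_P)$ and contrast their boundary behavior. The gap in shape between $h$ (which vanishes at both endpoints) and $g$ (which is monotone from $0$ to $s$) drives the asymmetric conclusion.

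For part (i), note that $h(0^+)=0$ and $h(\lambda)>0$ on $(0,\lambda_P)$ since $D_F<1$ on the interior. It remains to show $h$ vanishes at the right endpoint. If $P\in(0,1)$, then $\lambda_P=s/P<\infty$ and the identity $D_F(s,\lambda_P-0)=1$ stated after \eqref{condA} gives $h(\lambda_P-0)=0$. If $P=0$, so $\lambda_P=\infty$, I would rewrite \eqref{3.4} as
\[
1-D_F(s,\lambda)=\frac{B^{-1}(s,\lambda)-1}{B^{-1}(s,\lambda)+F(\lambda/s)},
\]
and observe that $\lambda(B^{-1}(s,\lambda)-1)=\sum_{k=0}^{s-1}\frac{s!}{k!}\lambda^{k-(s-1)}$ tends to $s$ as $\lambda\to\infty$, while $F(\lambda/s)\to\infty$ by \eqref{condA}; hence $h(\lambda)\to 0$. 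Either way, $h$ is continuous on $[0,\lambda_P)$, positive on the interior, and vanishes at both endpoints, so it attains a positive maximum $M>0$. For every $\eps\in(0,M)$ the intermediate value theorem applied on each side of a maximizer yields at least two solutions of $h(\lambda)=\eps$, which is \eqref{eqii} with $D=D_F$.

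For part (ii), I would first recognize $g$ as a balance quantity via rate conservation in the birth-and-death process: admitted arrivals equal completed services, so $g(\lambda)=\mathbb{E}_\lambda[\min(C,s)]\in[0,s]$. The endpoint values are $g(0^+)=0$ and, using $D_F^R(s,\lambda_P-0)=1-P$ from the discussion after \eqref{3.5}, $g(\lambda_P-0)=\lambda_P\,P=s$. The stationary ratios $\pi_{k+1}/\pi_k=\lambda a_k/\min(k+1,s)$, with $a_k=1$ for $k<s$ and $a_k=p_k$ for $k\ge s$, are strictly increasing in $\lambda$, so the family $\{\pi_\lambda\}$ is MLR-increasing and therefore stochastically increasing in $\lambda$; since $\min(\cdot,s)$ is nondecreasing, $g$ is nondecreasing. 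To upgrade this to strict monotonicity I would use analyticity: $g$ is a rational function of $\lambda$, $1/\lambda$, and $F(\lambda/s)$, each analytic on $(0,\lambda_P)$, so $g$ is real-analytic there. If $g$ were not strictly increasing, nondecreasingness would force it to be constant on some subinterval, and analyticity would then force $g$ constant on all of $(0,\lambda_P)$, contradicting $0=g(0^+)\ne g(\lambda_P-0)=s$. Strict monotonicity combined with continuity and the boundary values $0$ and $s$ yields a unique solution of $g(\lambda)=\eps$ for every $\eps\in(0,s)$. The main obstacle I anticipate is the strict direction of monotonicity; the analyticity-plus-endpoints route avoids the otherwise intricate direct differentiation of the formula \eqref{3.5} for $D_F^R$.
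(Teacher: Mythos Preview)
Your argument for (i) coincides with the paper's: both show that $\lambda(1-D_F(s,\lambda))$ is continuous, positive on $(0,\lambda_P)$, and vanishes at both endpoints, and then invoke the intermediate value theorem. Your explicit treatment of the case $P=0$ is a welcome addition.

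For (ii) you take a genuinely different route. The paper passes to QED coordinates, writing $\lambda(1-D_F^R(s,\lambda))=s-\sqrt{s}\,(\gamma+f_{s,F}^R(\gamma))$, and then appeals to Proposition~\ref{propp2}(ii), which shows directly that $\gamma+f_{s,F}^R(\gamma)$ is strictly increasing via the representation $f_{s,F}^R(\gamma)=-\gamma+(\gamma+f_s(\gamma))\big/\bigl(1+\tfrac{1}{\sqrt{s}}f_s(\gamma)H_s(\gamma)\bigr)$ together with the known monotonicity of $\gamma+f_s(\gamma)$ and of $f_s(\gamma)H_s(\gamma)$. Your argument is probabilistic: you identify $g(\lambda)=\mathbb{E}_\lambda[\min(C,s)]$ by rate conservation, obtain nondecreasingness from MLR (hence stochastic) monotonicity of $\{\pi_\lambda\}$, and then promote to strict monotonicity by real-analyticity of $g$ and the distinct boundary values. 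This is valid and more conceptual---it would apply to any birth--death family with rates monotone in a parameter---whereas the paper's computation is tied to the explicit formulas but delivers strictness in one stroke without the analyticity detour. One small oversight: when $P=0$ your endpoint claim $g(\lambda_P-0)=\lambda_P\cdot P$ is the indeterminate form $\infty\cdot 0$; you can repair it just as you did in (i), by writing $g(\lambda)=\bigl(\lambda(B^{-1}(s,\lambda)-1)+sF(\lambda/s)\bigr)\big/\bigl(B^{-1}(s,\lambda)+F(\lambda/s)\bigr)$ and letting $\lambda\to\infty$ to get $g(\lambda)\to s$.
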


Theorem \ref{thm12}(i) is intuitively clear since $\lambda(1-D_F(s,\lambda))$  is positive for all $\lambda\in (0,\lambda_P)$ while it vanishes when $\lambda\downarrow 0$ or $\lambda\uparrow \lambda_P$. The result of Theorem \ref{thm12}(ii) is less obvious and depends on monotonicity of $\lambda(1-D_F^R(s,\lambda))$ as a function of $\lambda\in (0,\lambda_P)$.
%In fact, this monotonicity is equivalent with the result of
For the case $p_k=p\in(0,1)$, $k\geq s$, it follows from \eqref{622} and Proposition~\ref{propp3}(iv) that
$\lambda(1-D_F(s,\lambda))$ is strictly concave, and so \eqref{eqii} with $D=D_F$ has {\it exactly} two solutions when $\eps>0$ is sufficiently small.
%
%\begin{prop}[State-independent policy]\label{prop13} For $s\geq 1$ and $p_k=p$, $k\geq s$,
%Equation \eqref{eqii} with $D=D_F^R$ has a unique solution for any $\eps\in(0,s)$.
%Equation \eqref{eqii} with $D=D_F$ has for small $\eps >0$ two solutions $\lambda$, one close to $\lambda=0$ (light traffic) and one close to $\lambda=\lambda_P=1/P$ (heavy traffic).
%\end{prop}

We next consider carried traffic quantities in the case of retrials. For convenience, we shall restrict ourselves to the choice $D=D_F^R$ in \eqref{eqii}.

\noindent {\bf Problem 4}
For fixed $s,\eps$, find $\lambda$ such that
\beq \label{5222}
\lambda(1-D_F^R(s,\lambda+\Omega))=\eps
 \eq
 with $\Omega$ defined as in \eqref{genC}.

Observe that $\lambda(1-D_F^R(s,\lambda+\Omega))=\eps$ if and only if
\beq \label{7.2}
(s-\gamma\sqrt{s})\Bigl(1-\frac{1}{\sqrt{s}}\,g_{s,F}^R(\gamma-a_{s,F}(\gamma))\Bigr)=\eps.
\eq
Using $f_{s,F}^R(\delta)=(1-\delta/\sqrt{s})\,g_{s,F}^R(\delta)$ and
(\ref{5.3}) in (\ref{7.2}), we can write equation \eqref{5222} as
\beq \label{7.8}
(s-\gamma\sqrt{s})\Bigl(1-\frac{1}{\sqrt{s}}\:\frac{a}{1-(\gamma-a)/\sqrt{s}}\Bigr)=\sqrt{s}\:\frac{(\sqrt{s}-\gamma)^2} {\sqrt{s}-\gamma+a}=\eps,
\eq
where $a=a_{s,F}(\gamma)$. It is now not possible to simply eliminate $a$, and a study of the function
\beq \label{7.9}
L_{s,F}(\gamma)=\frac{(\sqrt{s}-\gamma)^2}{\sqrt{s}-\gamma+a_{s,F}(\gamma)}
\eq
is required.

We now give a detailed presentation of what can be achieved analytically for the case that $F\equiv0$ ($p_k=0$ for all $k\geq s$). It is a challenging problem to explore in what respect this detailed analytic result can be extended to more general admission policies.
We thus consider the carried-traffic problem in (\ref{7.2}) for the case that $F\equiv 0$ and study the function
\beq \label{7.10}
L_s(\gamma)=L_{s,F}(\gamma)=\frac{(\sqrt{s}-\gamma)^2}{\sqrt{s}-\gamma+a_s(\gamma)},\quad 0<\gamma<\sqrt{s},
\eq
where $a_s(\gamma)$ is the solution of the Cohen equation $a=f_s(\gamma-a)$ with $f_s=f_{s,F\equiv 0}$. Both $f_s$ and $a_s$ have been studied in great detail in \cite{ajl}. Using the results of \cite{ajl} and extensions thereof, the following is shown in Appendix~\ref{aB}.
\begin{thm}\label{thmm3}There holds
\begin{align} \label{7.11}
L_s(\gamma)&\in(0,\sqrt{s}-\gamma), \quad \gamma\in(0,\sqrt{s}), \\
\label{7.12}
L_s(\gamma)&=\gamma\,s(1+O(\gamma\sqrt{s})),\quad \gamma\downarrow0,\\
 \label{7.13}
L_s(\gamma)&=(\sqrt{s}-\gamma)\Bigl(1+O\Bigl(\frac{e^s}{\sqrt{s}}\,\Bigl(1-\frac{\gamma}{\sqrt{s}}\Bigr)^s \Bigr)\Bigr),\quad \gamma\uparrow\sqrt{s}.
\end{align}
Furthermore, $L_s(\gamma)$ is unimodal on $(0,\sqrt{s})$, and the maximum of $L_s(\gamma)$ is assumed at the unique solution $\gamma=\hat{\gamma}_s$ of the equation
\beq \label{7.14}
\gamma\,a_s(\gamma)=\frac12\Bigl(1-\frac{\gamma}{\sqrt{s}}\Bigr),
\eq
and
\beq \label{7.15}
L_s(\hat{\gamma}_s)=\frac{\hat{\gamma}_s}{\hat{\gamma}_s+\dfrac{1}{2\sqrt{s}}}\,(\sqrt{s}-\hat{\gamma}_s).
\eq
Finally,
\beq \label{7.16}
\hat{\gamma}_s> f_s(0)=\gamma_s^{\ast}>\Bigl(\frac12+\frac{1}{16s}\Bigr)^{1/2}-\frac{1}{4\sqrt{s}},~~~~~~ s\geq2,
\eq
and $\hat{\gamma}_s$ increases in $s$ from $1/2$ at $s=1$ to $1.034113461$ at $s=\infty$, with $\hat{\gamma}_s\approx 1$ when $s=550$.
\end{thm}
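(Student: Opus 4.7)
The cleanest approach reparametrizes by the total offered rate $\mu := \lambda + \Omega$, under which the Cohen equation for $F\equiv 0$ assumes the classical Erlang-retrial form $\Omega = \mu\,B(s,\mu)$, equivalently $\lambda = \mu(1-B(s,\mu))$. Writing $\lambda = \sqrt{s}(\sqrt{s}-\gamma)$ and $\mu = \sqrt{s}(\sqrt{s}-\gamma+a_s(\gamma))$, a direct substitution yields
\[
\sqrt{s}\,L_s(\gamma) \;=\; \lambda\,\bigl(1-B(s,\mu)\bigr) \;=\; \mu\,\bigl(1-B(s,\mu)\bigr)^2 \;=:\; F(\mu),
\]
so the study of $L_s$ on $(0,\sqrt{s})$ reduces to that of the smooth scalar function $F(\mu)$ on its natural domain. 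The bound (\ref{7.11}) is then immediate: positivity of $a_s(\gamma)$ forces $\sqrt{s}-\gamma+a_s(\gamma) > \sqrt{s}-\gamma$, whence $L_s(\gamma) < \sqrt{s}-\gamma$.

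For the boundary behavior, as $\gamma\downarrow 0$ Cohen's equation forces $\mu\to\infty$, and the expansion $1-B(s,\mu) = s/\mu - s/\mu^2 + O(\mu^{-3})$ (obtained from $B^{-1}(s,\mu)=1+s/\mu+s(s-1)/\mu^2+\cdots$) gives $\mu = \sqrt{s}/\gamma + O(1)$, hence $a_s(\gamma) = 1/\gamma - \sqrt{s} + O(1)$. Substituting into the closed form of $L_s$ delivers $L_s(\gamma) = s\gamma/(1+\gamma\sqrt{s})\cdot(1+O(\gamma\sqrt{s})) = s\gamma(1+O(\gamma\sqrt{s}))$, which is (\ref{7.12}). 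For (\ref{7.13}) one uses that $B(s,\lambda)$ decays like $e^s(1-\gamma/\sqrt{s})^s/\sqrt{s}$ as $\gamma\uparrow\sqrt{s}$ (Stirling applied to $\lambda^s/s!$), so $a_s$ inherits the same exponential smallness; expanding $L_s = (\sqrt{s}-\gamma)\bigl(1+a_s/(\sqrt{s}-\gamma)\bigr)^{-1}$ then yields the stated correction.

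The critical-point analysis is the heart of the proof. Differentiating $F(\mu)$ using the Erlang B identity
\[
\mu\,\partial_\mu B(s,\mu) \;=\; B(s,\mu)\bigl(s-\mu+\mu B(s,\mu)\bigr) \;=\; B(s,\mu)(s-\lambda)
\]
(derived by logarithmic differentiation of $B$ as a rational function of $\mu$) produces $F'(\mu) = (1-B)\bigl[(1-B)-2B(s-\lambda)\bigr]$. The critical equation is therefore $1-B = 2B(s-\lambda)$, equivalently $B = 1/(1+2\gamma\sqrt{s})$. Using $\Omega = \mu B = \lambda B/(1-B)$ with $1-B = 2\gamma\sqrt{s}/(1+2\gamma\sqrt{s})$ gives $\Omega = (\sqrt{s}-\gamma)/(2\gamma)$, and dividing by $\sqrt{s}$ yields $a_s(\hat\gamma_s) = (1-\hat\gamma_s/\sqrt{s})/(2\hat\gamma_s)$, which is exactly (\ref{7.14}). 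Substituting this value of $a_s(\hat\gamma_s)$ back into the closed form of $L_s$ yields (\ref{7.15}) after cancellation of the factor $\sqrt{s}-\hat\gamma_s$. Unimodality of $L_s$ and uniqueness of the critical point I would establish by noting that $F(\mu)$ is smooth, strictly positive on $(0,\infty)$, and vanishes at both endpoints, together with a check that the factor $(1-B)-2B(s-\lambda)$ admits exactly one sign change (from $+$ to $-$) as $\mu$ increases; this sign analysis—expressing the factor purely in terms of $\mu$ via $\lambda=\mu(1-B)$ and exploiting monotonicity of $B(s,\mu)$ in $\mu$—is the main technical obstacle and draws on detailed properties of $a_s$ established in \cite{ajl}.

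For (\ref{7.16}), observe that $\gamma_s^* = f_s(0)$ satisfies $a_s(\gamma_s^*) = \gamma_s^*$: taking $a = \gamma_s^*$ in Cohen's equation makes $\gamma - a = 0$ and $f_s(0) = \gamma_s^*$ by definition. Hence at $\gamma = \gamma_s^*$ the LHS of (\ref{7.14}) equals $(\gamma_s^*)^2$ while the RHS equals $\tfrac12(1-\gamma_s^*/\sqrt{s})$. Because the difference $\Psi(\gamma) := \gamma a_s(\gamma) - \tfrac12(1-\gamma/\sqrt{s})$ satisfies $\Psi(0^+) = 1/2>0$ (using the asymptotic $\gamma a_s(\gamma)\to 1$ derived above) and $\Psi<0$ near $\sqrt{s}$, with a unique zero at $\hat\gamma_s$, we get $\hat\gamma_s>\gamma_s^*$ iff $\Psi(\gamma_s^*)>0$, which rearranges to the quadratic inequality $\gamma_s^* > (1/2+1/(16s))^{1/2} - 1/(4\sqrt{s})$. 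This lower bound on $\gamma_s^* = \sqrt{s}B(s,s)$ for $s\ge 2$ is verified using Jagerman's expansion (cf.\ \eqref{4999} at $\gamma=0$) together with a direct check for small $s$. Monotonicity of $\hat\gamma_s$ in $s$, from $\hat\gamma_1=1/2$ (direct computation using $B(1,\lambda)=\lambda/(1+\lambda)$, which gives $a_1(\gamma)=(1-\gamma)^2/\gamma$) to the QED limit determined by the pair $\gamma a = 1/2$, $a = \phi(\gamma-a)/\Phi(\gamma-a)$ (from Theorem~\ref{thm:qedretrials} with $F\equiv 0$), is established by implicit differentiation of (\ref{7.14}) in $s$, combined with the $a_s$ bounds from \cite{ajl}.
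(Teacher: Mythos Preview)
Your reparametrization $\sqrt{s}\,L_s(\gamma)=\mu(1-B(s,\mu))^2$ with $\mu=\lambda+\Omega$ is a genuinely different and clean route to the critical-point equation (\ref{7.14}) and the value (\ref{7.15}). The paper instead differentiates $L_s$ directly in $\gamma$, using the formula $a_s'(\gamma)=-a_s(\gamma)(\gamma+1/\sqrt{s})/(1-\gamma/\sqrt{s}-\gamma a_s(\gamma))$ from the Cohen equation, and then solves a quadratic in $\gamma a_s(\gamma)$ to arrive at (\ref{7.14}); your Erlang-B derivative identity $\mu B'=B(s-\lambda)$ short-circuits that algebra nicely. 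The boundary items (\ref{7.11})--(\ref{7.13}) are handled similarly in both approaches.

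The hard parts, however, are where your proposal has real gaps. For unimodality you acknowledge that the single sign change of $(1-B)-2B(s-\lambda)$ is ``the main technical obstacle'' and defer to \cite{ajl}, but the needed monotonicity is not in \cite{ajl}: the paper proves it here as a new lemma, namely that $\gamma a_s(\gamma)/(1-\gamma/\sqrt{s})^2$ is strictly decreasing on $(0,\sqrt{s})$ for $s>1$, by a contradiction argument based on the explicit $a_s'$ formula and a comparison with $(1-\gamma^2)(1-\gamma/\sqrt{s})/(1+\gamma/\sqrt{s})$. That same comparison yields as a byproduct the sharp inequality $\gamma a_s(\gamma)>(1-\gamma^2)\frac{1-\gamma/\sqrt{s}}{1+\gamma/\sqrt{s}}$, which at $\gamma=f_s(0)$ (where $a_s(f_s(0))=f_s(0)$) collapses, after cancellation of the cubic terms, to $f_s(0)^2>\tfrac12(1-f_s(0)/\sqrt{s})$; this is exactly your $\Psi(\gamma_s^*)>0$, and it immediately gives \emph{both} halves of (\ref{7.16}). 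Your proposed verification of the quadratic bound on $f_s(0)=\sqrt{s}\,B(s,s)$ via Jagerman's expansion plus a finite check is not a proof as stated, since (\ref{4999}) carries an implicit $O(s^{-1})$ without an explicit constant.

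Finally, the monotonicity of $\hat\gamma_s$ in $s$ is again not just implicit differentiation plus bounds from \cite{ajl}: the paper proves, by another contradiction argument in the spirit of the unimodality lemma, that $\gamma a_s(\gamma)/(1-\gamma/\sqrt{s})>\gamma a_t(\gamma)/(1-\gamma/\sqrt{t})$ for all $\gamma\in(0,\sqrt{t})$ when $s>t\geq 1$, from which $\hat\gamma_s>\hat\gamma_t$ follows at once. So your $\mu$-picture streamlines the easy parts, but the substantive content of the theorem lives in the monotonicity/comparison lemmas for $\gamma a_s(\gamma)/(1-\gamma/\sqrt{s})$, which your sketch does not supply.
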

From the detailed information provided by Theorem~\ref{thmm3}, it is seen that the equation
\beq \label{7.17}
\frac{(\sqrt{s}-\gamma)^2}{\sqrt{s}-\gamma+a}=\frac{\eps}{\sqrt{s}},
\eq
see (\ref{7.8}), has two, one or zero solutions according as $\eps$ $<$, $=$ or $>$ $\sqrt{s}\,L_s(\hat{\gamma}_s)$ with $L_s(\hat{\gamma}_s)$ given in (\ref{7.15}). Furthermore, it is seen that $\sqrt{s}\,L_s(\hat{\gamma}_s)=s+O(\sqrt{s})$, and that the highest carried-traffic numbers occur in a $\gamma$-region somewhat away from, but relatively close to, $\gamma=0$.

\begin{thm}[Bistability]\label{thmbi}
Problem $4$ has two, one or zero solutions according as $\eps$ $<$, $=$ or $>$ $\sqrt{s}\,L_s(\hat{\gamma}_s)$ with $L_s(\hat{\gamma}_s)$ given in {\rm(\ref{7.15})}.
\end{thm}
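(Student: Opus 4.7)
The plan is to view Problem 4, in the $F\equiv 0$ setting of Theorem \ref{thmm3}, as a level-set problem for the function $L_s(\gamma)$ of \eqref{7.10}, and then read off the solution count from the unimodality profile of $L_s$.

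First I recall the reformulation already carried out in the paragraphs leading up to \eqref{7.17}. Parametrising $\lambda=s-\gamma\sqrt{s}$ and writing $\Omega=a\sqrt{s}$ with $a=a_s(\gamma)$ the unique solution of the Cohen equation $a=f_s(\gamma-a)$ (existence guaranteed by Theorem \ref{thmm1}), equation \eqref{5222} becomes \eqref{7.8}, which is equivalent to
\beq
\sqrt{s}\,L_s(\gamma)=\eps, \qquad \gamma\in(0,\sqrt{s}).
\eq
The map $\gamma\mapsto\lambda=s-\gamma\sqrt{s}$ is a continuous bijection from $(0,\sqrt{s})$ onto $(0,s)$, so solutions of Problem 4 in this carried-traffic range are in one-to-one correspondence with solutions $\gamma\in(0,\sqrt{s})$ of $L_s(\gamma)=\eps/\sqrt{s}$.

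Second, I invoke Theorem \ref{thmm3}: $L_s$ is continuous and unimodal on $(0,\sqrt{s})$; by \eqref{7.12} and \eqref{7.13} it tends to $0$ at both endpoints; and it attains its unique global maximum $L_s(\hat{\gamma}_s)$ at the interior point $\hat{\gamma}_s$ characterised by \eqref{7.14}, with value given by \eqref{7.15}. Unimodality together with the intermediate value theorem on each of the two monotonicity intervals $(0,\hat{\gamma}_s)$ and $(\hat{\gamma}_s,\sqrt{s})$ then yields the counting: if $\eps/\sqrt{s}>L_s(\hat{\gamma}_s)$ no $\gamma$ works; if $\eps/\sqrt{s}=L_s(\hat{\gamma}_s)$ the only solution is $\gamma=\hat{\gamma}_s$; and if $0<\eps/\sqrt{s}<L_s(\hat{\gamma}_s)$ there are exactly two solutions, one in each of the two monotonicity intervals. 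Multiplying the threshold by $\sqrt{s}$ gives the stated trichotomy.

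The main obstacle is not in this corollary but in Theorem \ref{thmm3} itself: the unimodality of $L_s$ and the identification of the maximiser via \eqref{7.14} require implicit differentiation of $a_s(\gamma)=f_s(\gamma-a_s(\gamma))$ and careful use of the detailed properties of $f_s$ developed in \cite{ajl}, which is why that analysis is deferred to Appendix~\ref{aB}. Once Theorem \ref{thmm3} is granted, Theorem \ref{thmbi} is essentially a one-line consequence of unimodality plus the vanishing of $L_s$ at the boundary of $(0,\sqrt{s})$.
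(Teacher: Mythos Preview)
Your proposal is correct and matches the paper's approach: the paper likewise derives Theorem~\ref{thmbi} as an immediate corollary of Theorem~\ref{thmm3}, observing right after that theorem that by unimodality of $L_s$ and its vanishing at the endpoints, equation~\eqref{7.17} has two, one, or zero solutions according to the stated trichotomy in $\eps$.
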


In Figure~\ref{fig2} we display $\frac{1}{\sqrt{s}}\,L_s(\delta\sqrt{s})$, $0<\delta<1$, for $s=1,5,10,50,100$. It is observed that the graphs approximate the graph of the function $1-\delta$, $0<\delta<1$, when $s$ gets large.

\begin{figure}[hbtp]
%  \psfrag{g}{\begin{footnotesize}$a_s(\gamma)$\end{footnotesize}}
%  \psfrag{f}{\begin{footnotesize}$\gamma$\end{footnotesize}}
\begin{center}
 \includegraphics[width= .5\linewidth]{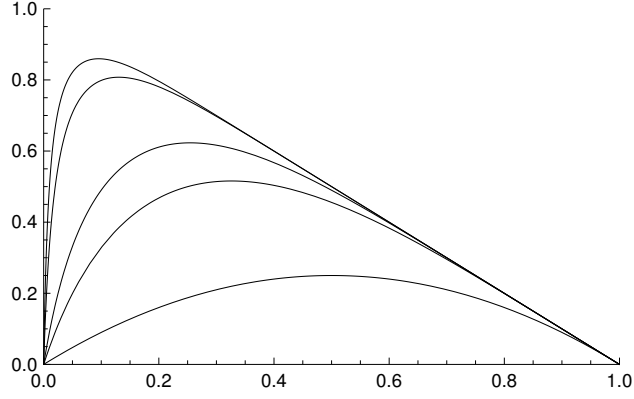}
  \end{center}
\caption{Plot of $\frac{1}{\sqrt{s}}\,L_s(\delta\sqrt{s})$, $0<\delta<1$, for the cases $s=1,5,10,50,100\,$, with maximizer $\hat{\gamma}_s/\sqrt{s}$ decreasing with $s$ and maximum value $\frac{1}{\sqrt{s}}\,L_s(\hat{\gamma}_s)$ increasing with $s$.\label{fig2}}
\end{figure}

\section{Proof of results on dimensioning} \label{sec4}
%In this section we consider the generalized Cohen equation (GCE) \eqref{genC} in the QED regime.
In this section we drop condition \eqref{condA} so that both $P=1$ and $F(\lambda_P-0)<\infty$ are allowed.

The equations in \eqref{eqi} and in \eqref{eqii} take the form
\beq \label{4.5}
g(\gamma)=\eps,~~~~~~g=g_{s,F}~{\rm or}~g_{s,F}^R,
\eq
and
\beq \label{4.6}
\gamma+f(\gamma)=\frac{s-\eps}{\sqrt{s}}, \quad f=f_{s,F} \ {\rm or} \ f_{s,F}^R,
\eq
respectively, where $g_{s,F}^R$ and $f_{s,F}^R$ as in \eqref{4444} and \eqref{4.4}, and
we define for $\gamma\in (\gamma_P,\sqrt{s}]$,
\beq \label{4.3}
f_{s,F}(\gamma):=\Bigl(1-\frac{\gamma}{\sqrt{s}}\Bigr)\,g_{s,F}(\gamma)=
\sqrt{s}\Bigl(1-\frac{\gamma}{\sqrt{s}}\Bigr)\,D_F(s,s-\gamma\sqrt{s}).
\eq
%and
%\beq \label{4.4}
%f_{s,F}^R(\gamma):=\Bigl(1-\frac{\gamma}{\sqrt{s}}\Bigr)\,g_{s,F}^R(\gamma):=\sqrt{s} \Bigl(1-\frac{\gamma}{\sqrt{s}}\Bigr)\,D_F^R(s,s-\gamma\sqrt{s}),
%\eq
%respectively.

To get insight and pertinent results about when the equations in (\ref{4.5}) and (\ref{4.6}) have (unique) solutions and how to find these solutions, we relate the functions $f_{s,F}$ and $f_{s,F}^R$ to the functions $f_s$ and $g_s$ given by
\beq \label{4.7}
f_s(\gamma)=\Bigl(1-\frac{\gamma}{\sqrt{s}}\Bigr)\,g_s(\gamma)=\sqrt{s}\Bigl(1-\frac{\gamma}{\sqrt{s}}\Bigr)\, B(s,s-\gamma\sqrt{s}),~~~~~~\gamma\leq\sqrt{s}.
\eq
The latter functions have been studied in considerable detail in \cite{ajl}, in particular with respect to monotonicity properties. Some of these properties are collected in the beginning of Appendix \ref{aA}.
\begin{lem}\label{lemm1} For $\gamma\in(\gamma_P,\sqrt{s}]$,
\beq \label{4.8}
f_{s,F}(\gamma)=f_s(\gamma)\,\frac{1+\Bigl(1-\dfrac{\gamma}{\sqrt{s}}\Bigr)\,H_s(\gamma)} {1+\dfrac{1}{\sqrt{s}}\,f_s(\gamma)\,H_s(\gamma)},
\eq
and
\beq \label{4.9}
f_{s,F}^R(\gamma)=f_s(\gamma)\,\frac{1-\dfrac{\gamma}{\sqrt{s}}\,H_s(\gamma)} {1+\dfrac{1}{\sqrt{s}}\,f_s(\gamma)\, H_s(\gamma)},
\eq
with
\eqan{\label{4.10}
H_s(\gamma)=\sum_{n=0}^\infty p_s\cdot \cdots p_{s+n}\Big(1-\frac{\gamma}{\sqrt{s}}\Big)^n.
}
\end{lem}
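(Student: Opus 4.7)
\textbf{Proof plan for Lemma \ref{lemm1}.} The plan is a direct computation: express $D_F$ and $D_F^R$ at $\lambda=s-\gamma\sqrt{s}$ in closed form in terms of $B(s,\lambda)$ and $F(\lambda/s)$, then identify the two dimensionless quantities that arise with $f_s(\gamma)/\sqrt{s}$ and $(1-\gamma/\sqrt{s})H_s(\gamma)$.

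First I would set $\lambda=s-\gamma\sqrt{s}$, so $\lambda/s=1-\gamma/\sqrt{s}$ and $1-s/\lambda = -(\gamma/\sqrt{s})/(1-\gamma/\sqrt{s})$. From the definition \eqref{3.3} of $F$ together with \eqref{4.10},
\begin{equation*}
F\!\left(1-\tfrac{\gamma}{\sqrt{s}}\right)=\sum_{n=0}^{\infty}p_s\cdots p_{s+n}\left(1-\tfrac{\gamma}{\sqrt{s}}\right)^{n+1}=\left(1-\tfrac{\gamma}{\sqrt{s}}\right)H_s(\gamma),
\end{equation*}
and from \eqref{4.7} one has the key bookkeeping identity $\frac{1}{\sqrt{s}}f_s(\gamma)=(1-\gamma/\sqrt{s})B(s,s-\gamma\sqrt{s})$.

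Next, invert \eqref{3.4} and \eqref{3.5} to obtain
\begin{equation*}
D_F(s,\lambda)=\frac{(1+F(\lambda/s))\,B(s,\lambda)}{1+F(\lambda/s)\,B(s,\lambda)},\qquad D_F^R(s,\lambda)=\frac{(1+(1-s/\lambda)F(\lambda/s))\,B(s,\lambda)}{1+F(\lambda/s)\,B(s,\lambda)}.
\end{equation*}
Multiplying both formulas by $\sqrt{s}(1-\gamma/\sqrt{s})$ and using $\sqrt{s}(1-\gamma/\sqrt{s})B(s,s-\gamma\sqrt{s})=f_s(\gamma)$ in the numerator, plus $F(\lambda/s)\,B(s,\lambda)=(1-\gamma/\sqrt{s})H_s(\gamma)\,B(s,s-\gamma\sqrt{s})=\frac{1}{\sqrt{s}}f_s(\gamma)H_s(\gamma)$ in the denominator, yields
\begin{equation*}
f_{s,F}(\gamma)=\frac{f_s(\gamma)\bigl(1+(1-\gamma/\sqrt{s})H_s(\gamma)\bigr)}{1+\tfrac{1}{\sqrt{s}}f_s(\gamma)H_s(\gamma)}
\end{equation*}
which is \eqref{4.8}. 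For \eqref{4.9}, the extra factor $1+(1-s/\lambda)F(\lambda/s)$ in the numerator becomes, by the identity $(1-s/\lambda)F(\lambda/s)=-(\gamma/\sqrt{s})H_s(\gamma)$, precisely $1-(\gamma/\sqrt{s})H_s(\gamma)$, delivering the claimed form.

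There is no substantial obstacle here: the lemma is a rearrangement, and the only care needed is to keep track of the factor $1-\gamma/\sqrt{s}$ when converting between $F(\lambda/s)$ and $H_s(\gamma)$, and between $B(s,\lambda)$ and $f_s(\gamma)/\sqrt{s}$. A mild remark to be added concerns the case $\gamma=\sqrt{s}$, where $1-s/\lambda$ is singular; however, in that case $f_{s,F}(\sqrt{s})=f_{s,F}^R(\sqrt{s})=0$ trivially from the prefactor $1-\gamma/\sqrt{s}$, so the formulas \eqref{4.8}--\eqref{4.9} extend continuously. Convergence of $H_s(\gamma)$ for $\gamma\in(\gamma_P,\sqrt{s}]$ follows since then $|1-\gamma/\sqrt{s}|<1/P$, matching the domain on which $F(\lambda/s)$ is defined via \eqref{4.1}.
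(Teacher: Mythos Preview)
Your proof is correct and takes essentially the same approach as the paper: both use the formulas \eqref{3.4} and \eqref{3.5} together with the key identity $F(1-\gamma/\sqrt{s})=(1-\gamma/\sqrt{s})H_s(\gamma)$, and then substitute $f_s(\gamma)=\sqrt{s}(1-\gamma/\sqrt{s})B(s,s-\gamma\sqrt{s})$. The paper's proof is a one-line remark to this effect, whereas you spell out the algebra and add the boundary observation at $\gamma=\sqrt{s}$, but the underlying argument is identical.
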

\begin{proof}This follows in a straightforward manner from (\ref{3.4}) and (\ref{3.5}) and the definition of $H_s$, where we note that $F(1-\frac{\gamma}{\sqrt{s}})=(1-\frac{\gamma}{\sqrt{s}})\,H_s(\gamma)$.
\end{proof}

Lemma~\ref{lemm1} shows that $f_{s,F}$ and $f_{s,F}^R$ factorize into an admission policy independent part $f_s(\gamma)$ and a part comprising the admission policy via $H_s$. By dividing either side of (\ref{4.8}) and (\ref{4.9}) by $(1-\gamma/\sqrt{s})$, it is seen that a similar result as Lemma~\ref{lemm1} holds for $g_{s,F}$ and $g_{s,F}^R$, with the same factors comprising the admission policy as in (\ref{4.8}) and (\ref{4.9}).

The following result gives a global picture for $f_{s,F}$ and $f_{s,F}^R$ in terms of inequalities. We observe that $F(\lambda_P-0)=\infty \Leftrightarrow H_s(\gamma_P+0)=\infty$.
\begin{prop}\label{propp1}
\bi{(vii)0}
\ITEM{(i)} For $\gamma_P<\gamma<\sqrt{s}$,
\beq \label{4.11}
\max\,\{0,{-}\gamma\}\leq f_{s,F}^R(\gamma)\leq f_s(\gamma)\leq f_{s,F}(\gamma)\leq\min\Bigl\{\sqrt{s}-\gamma,\frac{\sqrt{s}\,f_s(\gamma)} {\gamma+f_s(\gamma)}\Bigr\}.
\eq
\ITEM{(ii)} There is equality in the first inequality in {\rm (\ref{4.11})} if and only if $\gamma\in(0,\sqrt{s})$ and
$p_k=1$ for all $k\geq s$, and in that case
\beq \label{4.12}
H(\gamma)=\frac{\sqrt{s}}{\gamma},~~~~~~\gamma_P=0<\gamma<\sqrt{s}.
\eq
\ITEM{(iii)} There is equality in the second inequality in {\rm (\ref{4.11})} for any $\gamma\in(\gamma_P,\sqrt{s})$ if and only if $p_k=0$ for all $k\geq s$. There is equality in the third inequality in {\rm (\ref{4.11})} for any $\gamma\in(\gamma_P,\sqrt{s})$ if and only if $p_k=0$ for all $k\geq s$.
\ITEM{(iv)} There is equality in the fourth inequality in {\rm (\ref{4.11})} if and only if $\gamma\in(0,\sqrt{s})$ and $p_k=1$ for all $k\geq s$.
\ITEM{(v)} For $\gamma=\sqrt{s}$,
\beq \label{4.13}
f_{s,F}^R(\gamma)=f_s(\gamma)=f_{s,F}(\gamma)=0.
\eq
\ITEM{(vi)} $f_{s,F}^R(\gamma_P+0)={-}\gamma_P$ if and only if $H_s(\gamma_P+0)=\infty$.
\ITEM{(vii)}  $f_{s,F}(\gamma_P+0)=\sqrt{s}-\gamma_P$ if and only if $H_s(\gamma_P+0)=\infty$.
\ei
\end{prop}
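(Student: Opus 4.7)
The central tool is Lemma~\ref{lemm1}, which expresses $f_{s,F}(\gamma)$ and $f_{s,F}^R(\gamma)$ as explicit rational functions of the two nonnegative quantities $f_s(\gamma)$ and $H_s(\gamma)$. Combined with two elementary facts about the bare Erlang--B function $f_s$ recalled in Appendix \ref{aA} from \cite{ajl}---the two-sided bound $0\leq f_s(\gamma)\leq \sqrt{s}-\gamma$, equivalent to $0\leq B(s,\cdot)\leq 1$, and its strict improvement $f_s(\gamma)>\max\{0,-\gamma\}$ for all $\gamma\in(\gamma_P,\sqrt{s})$---every claim of the proposition will reduce to a short algebraic check.

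For part (i) I plan to compute the five relevant differences in the chain \eqref{4.11} from the representations \eqref{4.8} and \eqref{4.9}. After clearing the strictly positive common denominator $1+f_s(\gamma)H_s(\gamma)/\sqrt{s}$, routine algebra yields the closed forms
\begin{align*}
f_{s,F}(\gamma)-f_s(\gamma) &= \frac{f_s(\gamma)\,H_s(\gamma)\,(\sqrt{s}-\gamma-f_s(\gamma))}{\sqrt{s}\,(1+f_s(\gamma)H_s(\gamma)/\sqrt{s})},\\
f_s(\gamma)-f_{s,F}^R(\gamma) &= \frac{f_s(\gamma)\,H_s(\gamma)\,(f_s(\gamma)+\gamma)}{\sqrt{s}\,(1+f_s(\gamma)H_s(\gamma)/\sqrt{s})},\\
f_{s,F}^R(\gamma)+\gamma &= \frac{f_s(\gamma)+\gamma}{1+f_s(\gamma)H_s(\gamma)/\sqrt{s}},\\
(\sqrt{s}-\gamma)-f_{s,F}(\gamma) &= \frac{\sqrt{s}-\gamma-f_s(\gamma)}{1+f_s(\gamma)H_s(\gamma)/\sqrt{s}},
\end{align*}
together with an analogous expression for $\sqrt{s}f_s(\gamma)/(\gamma+f_s(\gamma))-f_{s,F}(\gamma)$ whose numerator carries the additional factor $1-\gamma H_s(\gamma)/\sqrt{s}$. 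The signs of all five expressions follow from the $f_s$-bounds, the nonnegativity $H_s\geq 0$, and the termwise estimate $\gamma H_s(\gamma)\leq \gamma \sum_{n\geq 0}(1-\gamma/\sqrt{s})^n=\sqrt{s}$ valid for $\gamma>0$ (and trivially for $\gamma\leq 0$).

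The equality statements (ii)--(iv) are then read off from these factorizations. In (ii) and (iv), equality forces, after discarding strictly positive factors, the single identity $H_s(\gamma)=\sqrt{s}/\gamma$; a termwise comparison with the geometric series $\sum(1-\gamma/\sqrt{s})^n$ then forces $p_s p_{s+1}\cdots p_{s+n}=1$ for every $n$, i.e.\ $p_k=1$ for all $k\geq s$ and automatically $\gamma\in(0,\sqrt{s})$; this also delivers the side identity (4.12). The strict lower bound $f_s(\gamma)>\max\{0,-\gamma\}$ is what rules out the spurious alternative $f_s(\gamma)+\gamma=0$ in the regime $\gamma<0$ of (ii). For (iii), equality for \emph{every} $\gamma\in(\gamma_P,\sqrt{s})$ in either middle inequality forces $H_s\equiv 0$; by positivity of $(1-\gamma/\sqrt{s})^n$ this in turn forces $p_s=0$, and since the values $p_{s+k}$ for $k\geq 1$ no longer affect the stationary distribution we adopt the canonical representative $p_k=0$ for all $k\geq s$.

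Part (v) is immediate from $f_s(\sqrt{s})=0$ in Lemma~\ref{lemm1}. For (vi) and (vii), I analyze the one-sided limit $\gamma\downarrow\gamma_P$ in \eqref{4.8} and \eqref{4.9}: if $H_s(\gamma_P+0)=\infty$, dividing numerator and denominator by $H_s(\gamma)$ gives the limits $-\gamma_P$ and $\sqrt{s}-\gamma_P$ directly. For the converse, if $H_s(\gamma_P+0)<\infty$, the identities for $f_{s,F}^R(\gamma)+\gamma$ and $(\sqrt{s}-\gamma)-f_{s,F}(\gamma)$ displayed above show that $f_{s,F}^R(\gamma_P+0)=-\gamma_P$ forces $f_s(\gamma_P)=-\gamma_P$, while $f_{s,F}(\gamma_P+0)=\sqrt{s}-\gamma_P$ forces $f_s(\gamma_P)=\sqrt{s}-\gamma_P$, both of which are excluded by the strict bounds on $f_s$ quoted at the outset. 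The main technical obstacle I anticipate is precisely this converse direction, whose cleanness rests on the sharp strict inequalities for $f_s$ from \cite{ajl}.
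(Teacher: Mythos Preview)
Your proposal is correct and follows essentially the same approach as the paper's own proof: the four difference identities you display are precisely the paper's rewrites (EC.2)--(EC.5), derived from Lemma~\ref{lemm1}, and the signs are read off from the same basic facts about $f_s$ (namely $0<\gamma+f_s(\gamma)<\sqrt{s}$) together with the termwise bound $H_s(\gamma)\leq\sqrt{s}/\gamma$ for $\gamma>0$ (the paper's (EC.1)). Your explicit handling of the converse in (vi)--(vii), and your remark on the canonical representative in (iii), are minor clarifications beyond what the paper spells out, but the overall strategy is identical.
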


The proof of Proposition \ref{propp1} is given in Appendix \ref{aA} and uses the representations (\ref{4.8}) and (\ref{4.9}).

\begin{figure}[hbtp]
%  \psfrag{g}{\begin{footnotesize}$a_s(\gamma)$\end{footnotesize}}
%  \psfrag{f}{\begin{footnotesize}$\gamma$\end{footnotesize}}
\begin{center}
 \includegraphics[width= .5\linewidth]{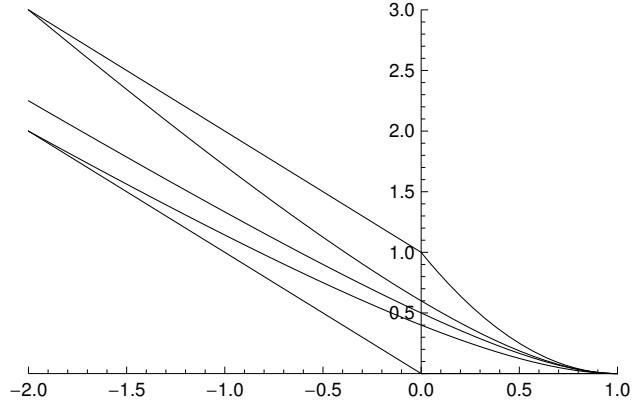}
  \end{center}
\caption{Picture illustrating $f_{s,F}^R$, $f_{s,F}$ and $f_s$ with bounds as in Proposition~\ref{propp1} for the case that $0<P<1$ and $H_s(\gamma_P+0)=\infty$ (actual choice: $s=1$, $p_k=p=1/3$, $k\geq s$)\label{fig1}}
\end{figure}

In Figure~\ref{fig1} we show $f_{s,F}^R$, $f_s$ and $f_{s,F}$, together with the bounds in {\rm (\ref{4.11})} for the case that ${-}\infty<\gamma_P<0$ and $H(\gamma_P+0)=\infty$ (actual choice: $s=1$ and $p_k=p=1/3$, $k\geq s$).
%See Figure~2 where stylized pictures for the various other cases are presented.

In general, monotonicity properties for the functions $f_{s,F}$, $g_{s,F}^R$ and of the functions $\gamma+f_{s,F}(\gamma)$ and $\gamma+f_{s,F}^R(\gamma)$, see (\ref{4.5}) and (\ref{4.6}) are not easy to establish or manifestly not true. We now present some results, positive and negative, on monotonicity and convexity of the functions in (\ref{4.3}) and (\ref{4.4}).
\begin{prop}\label{propp2}
\bi{(ii)0}
\ITEM{(i)} $f_{s,F}$ and $g_{s,F}$ are strictly decreasing.
\ITEM{(ii)} $\gamma+f_{s,F}^R(\gamma)$, $\gamma_P<\gamma\leq\sqrt{s}$, is strictly increasing.
\ei
\end{prop}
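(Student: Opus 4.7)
I would first show that $D_F(s,\lambda)$ is strictly increasing in $\lambda$. Writing $x=B^{-1}(s,\lambda)$ and $y=F(\lambda/s)$, the decomposition~\eqref{3.4} reads $D_F^{-1}=(x+y)/(1+y)$. Since $B$ is strictly increasing in $\lambda$ (so $x$ is strictly decreasing), $y$ is non-decreasing, and $x\geq 1$, the identity $\partial_y[(x+y)/(1+y)]=(1-x)/(1+y)^2\leq 0$ shows $D_F^{-1}$ is strictly decreasing in $\lambda$; hence $D_F$ is strictly increasing, and $g_{s,F}(\gamma)=\sqrt{s}\,D_F(s,s-\gamma\sqrt{s})$ is strictly decreasing on $(\gamma_P,\sqrt{s}]$. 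Then $f_{s,F}(\gamma)=(1-\gamma/\sqrt{s})\,g_{s,F}(\gamma)$ is a product of two strictly positive and strictly decreasing functions on $(\gamma_P,\sqrt{s})$, vanishing at $\gamma=\sqrt{s}$, so it too is strictly decreasing on $(\gamma_P,\sqrt{s}]$.

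\textbf{Plan for (ii).} The heart of the argument is the algebraic identity
\[
\gamma+f_{s,F}^R(\gamma)=\frac{\gamma+f_s(\gamma)}{1+f_s(\gamma)H_s(\gamma)/\sqrt{s}},
\]
which I would derive from~\eqref{4.9} by placing $\gamma+f_{s,F}^R(\gamma)$ over the common denominator $1+f_s(\gamma)H_s(\gamma)/\sqrt{s}$: the cross-terms $\pm\gamma f_s(\gamma)H_s(\gamma)/\sqrt{s}$ cancel in the numerator, leaving $\gamma+f_s(\gamma)$. Granting this identity, the numerator $\gamma+f_s(\gamma)=(s-\lambda(1-B(s,\lambda)))/\sqrt{s}$ (with $\lambda=s-\gamma\sqrt{s}$) is strictly positive and strictly increasing in $\gamma$; this is a standard Erlang~B property, since $\lambda(1-B(s,\lambda))$ is the throughput of the $M/M/s/s$ system and is strictly increasing in $\lambda$, collected among the facts from~\cite{ajl} at the start of Appendix~\ref{aA}. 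The denominator $1+f_s(\gamma)H_s(\gamma)/\sqrt{s}$ is strictly positive and non-increasing in $\gamma$: $f_s\geq 0$ is strictly decreasing on $(\gamma_P,\sqrt{s}]$ (the special case $F\equiv 0$ of~(i)), and $H_s(\gamma)=\sum_{n\geq 0}p_s\cdots p_{s+n}(1-\gamma/\sqrt{s})^n$ is a convergent sum of non-negative, non-increasing functions of $\gamma$ on $(\gamma_P,\sqrt{s})$. A positive strictly increasing function divided by a positive non-increasing function is strictly increasing, yielding~(ii).

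\textbf{Main obstacle.} The only step that is not routine is spotting the algebraic simplification, since $f_{s,F}^R$ itself is not monotone in general and a direct derivative would require a delicate sign analysis of the interacting admission terms. Once the identity is in place, (ii) reduces to familiar Erlang~B monotonicity together with the transparent monotonicity of the admission-policy weight $H_s$.
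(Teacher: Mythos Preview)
Your proposal is correct and essentially the same as the paper's proof. For (ii) you use exactly the paper's identity $\gamma+f_{s,F}^R(\gamma)=(\gamma+f_s(\gamma))/(1+f_s(\gamma)H_s(\gamma)/\sqrt{s})$ (this is (\ref{EC.2}) rearranged) and the same monotone-quotient argument; for (i) you work directly with $D_F^{-1}=(B^{-1}+F)/(1+F)$ from~\eqref{3.4}, which is just the paper's representation~(\ref{EC.6}) written in the variables $x=B^{-1}$, $y=F(\lambda/s)$ rather than $g_s$, $f_sH_s$, and the monotonicity reasoning is identical.
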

The proof of Proposition \ref{propp2} is given in Appendix~\ref{aA}.
An interesting example is obtained when we choose
\beq \label{4.14}
p_k=p\in(0,1), \quad k\geq s.
\eq
Then $P=p$, $\gamma_P={-}\frac{1-p}{p}\,\sqrt{s}\in({-}\infty,0)$, and
\beq \label{4.15}
H_s(\gamma)=\sum_{n=0}^{\infty}\,p^{n+1}\Bigl(1-\frac{\gamma}{\sqrt{s}}\Bigr)^n= \frac{p}{1-p(1-\gamma/\sqrt{s})} ,~~~~~\gamma_P<\gamma\leq\sqrt{s}.
\eq
Furthermore, $H_s(\gamma_P+0)=\infty$, and we compute from (\ref{4.8}), (\ref{4.9}) and (\ref{4.15})
\beq \label{4.16}
f_{s,F}(\gamma)=\frac{f_s(\gamma)}{1-p+\dfrac{p}{\sqrt{s}}\,(\gamma+f_s(\gamma))},~~~~~ f_{s,F}^R(\gamma)= \frac{(1-p)\,f_s(\gamma)}{1-p+\dfrac{p}{\sqrt{s}}\,(\gamma+f_s(\gamma))}.
\eq
There is for this case the following result.
\begin{prop}\label{propp3}
Assume $p_k=p\in(0,1)$, $k\geq s$, so that $f_{s,F}$ and $f_{s,F}^R$ are given by {\rm(\ref{4.16})}, and let $g_{s,F}$ and $g_{s,F}^R$ be the functions associated to $f_{s,F}$ and $f_{s,F}^R$ according to {\rm(\ref{4.3})} and {\rm(\ref{4.4})}, respectively. Then,
\bi{(iii)0}
\ITEM{(i)} $f_{s,F}$ and $f_{s,F}^R$ are strictly decreasing and convex in $\gamma_P<\gamma\leq\sqrt{s}$, with
\beq \label{4.17}
f_{s,F}(\gamma_P+0)=\sqrt{s}-\gamma_P=\frac{\sqrt{s}}{p},~~~~~~f_{s,F}^R(\gamma_P+0)={-}\gamma_P=\frac{1-p}{p}\, \sqrt{s},
\eq
and $f_{s,F}(\sqrt{s})=f_{s,F}^R(\sqrt{s})=0$.
\ITEM{(ii)} $g_{s,F}$ and $g_{s,F}^R$ are strictly decreasing in $\gamma_P<\gamma\leq\sqrt{s}$, with
\beq \label{4.18}
g_{s,F}(\gamma_P+0)=\sqrt{s},~~~~~~g_{s,F}^R(\gamma_P+0)=(1-p)\,\sqrt{s},
\eq
and $g_{s,F}(\sqrt{s})=g_{s,F}^R(\sqrt{s})=0$.
\ITEM{(iii)} $\gamma+f_{s,F}^R(\gamma)$ is strictly increasing and convex in $\gamma_P<\gamma\leq\sqrt{s}$, with
\beq \label{4.19}
\Bigl.\gamma+f_{s,F}^R(\gamma)\Bigr|_{\gamma=\gamma_P+0}=0,~~~~~~\Bigl.\gamma+f_{s,F}^R(\gamma)\Bigr|_{\gamma=\sqrt{s}}
=\sqrt{s}.
\eq
\ITEM{(iv)} $\gamma+f_{s,F}(\gamma)$ is non-monotonic and strictly convex in $\gamma_P<\gamma\leq\sqrt{s}$, with
\beq \label{4.20}
\Bigl.\gamma+f_{s,F}(\gamma)\Bigl|_{\gamma=\gamma_P+0}=\Bigl.\gamma+f_{s,F}(\gamma)\Bigr|_{\gamma=\sqrt{s}} =\sqrt{s}.
\eq
\ei
\end{prop}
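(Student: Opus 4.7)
The key observation is that in this special case formula (4.16) gives
\begin{equation}
f_{s,F}^R(\gamma) = (1-p)\,f_{s,F}(\gamma),
\end{equation}
and hence $g_{s,F}^R = (1-p)\,g_{s,F}$ by (4.3)--(4.4), so every assertion about the $R$-versions reduces to the corresponding one for $f_{s,F}$ or $g_{s,F}$. With this reduction, the monotonicity content of (i)--(iii) is already handled by Proposition~\ref{propp2}: strict decrease of $f_{s,F}$ and $g_{s,F}$ is part (i) there, and strict increase of $\gamma + f_{s,F}^R(\gamma)$ is part (ii). The boundary values (4.17)--(4.20) would then follow by direct substitution into (4.16), using $f_s(\sqrt{s})=0$ (since $B(s,0)=0$) at the right endpoint and the identity $1-p + (p/\sqrt{s})\gamma_P = 0$ at the left endpoint.

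The core technical task is thus strict convexity of $f_{s,F}$, from which strict convexity of $f_{s,F}^R$, $\gamma + f_{s,F}$ and $\gamma + f_{s,F}^R$ follows automatically (since adding a linear function preserves convexity). I plan to argue by direct differentiation of $f_{s,F} = f_s/D$, writing $\alpha = 1-p$, $\beta = p/\sqrt{s}$, $u(\gamma)=\gamma + f_s(\gamma)$ and $D(\gamma) = \alpha + \beta u(\gamma)$. A first computation gives
\begin{equation}
(f_{s,F})'(\gamma) = \frac{(\alpha+\beta\gamma)\,f_s'(\gamma) - \beta f_s(\gamma)}{D(\gamma)^2},
\end{equation}
and the crucial cancellation that the numerator $N$ satisfies $N' = (\alpha+\beta\gamma)\,f_s''$. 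A second application of the quotient rule then yields
\begin{equation}
(f_{s,F})''(\gamma) = \frac{(\alpha+\beta\gamma)\bigl[f_s''(\alpha+\beta u) - 2\beta u' f_s'\bigr] + 2\beta^2\,u'\,f_s}{D(\gamma)^3}.
\end{equation}
Invoking the properties of $f_s$ established in \cite{ajl} that $f_s$ is strictly decreasing and strictly convex (so $f_s' < 0$, $f_s'' > 0$) and that $u' = 1 + f_s' > 0$ (this being the $F\equiv 0$ case of Proposition~\ref{propp2}(ii)), and observing that $\alpha + \beta\gamma$ vanishes at $\gamma_P$ with derivative $\beta > 0$, hence is strictly positive on $(\gamma_P, \sqrt{s}]$, one sees that every summand in the numerator is nonnegative, with the last one strictly positive on $(\gamma_P, \sqrt{s})$ because $f_s > 0$ there. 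This gives $(f_{s,F})'' > 0$.

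The non-monotonicity in (iv) then comes for free: the two boundary values in (4.20) both equal $\sqrt{s}$, and any strictly convex function that takes equal values at two distinct points must dip strictly below this common value on the open interval between them, and in particular cannot be monotonic. I expect the main obstacle to be bookkeeping rather than conceptual: one must organise the second-derivative computation so that positivity of each summand in the numerator is manifest, without needing any fine estimates on $f_s$ beyond the strict monotonicity and strict convexity borrowed from \cite{ajl}.
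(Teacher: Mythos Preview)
Your proposal is correct and follows essentially the same route as the paper: the paper also reduces via $f_{s,F}^R=(1-p)f_{s,F}$, computes the second derivative of $f_s/(1+b(\gamma+f_s))$ with $b=p/((1-p)\sqrt{s})$ (your $\beta/\alpha$), and verifies positivity term by term using exactly the facts $f_s''>0$, $-1<f_s'<0$, $f_s>0$, and $1+b\gamma>0$ on $(\gamma_P,\sqrt{s})$. The only cosmetic difference is that you invoke Proposition~\ref{propp2} for the monotonicity claims, whereas the paper re-derives monotonicity in this special case directly from the explicit formula~(\ref{4.16}).
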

The proof of Proposition \ref{propp3} is given in Appendix~\ref{aA}.

We finally have the following result.
\begin{prop}\label{propp6}
Assume that $p_k$ is chosen as in \eqref{4.3a}. Then  $g_{s,F}^R(\gamma)$ is strictly decreasing  in $\gamma\in(\gamma_P,\sqrt{s})$.
\end{prop}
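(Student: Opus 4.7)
My plan is to reduce the claim to the monotonicity of a single rational function in $\lambda$. Under the admission policy \eqref{4.3a}, no customer is ever admitted once $C(t)=N+1$, so the birth-and-death process is effectively $M/M/s/(N+1)$ and lives on $\{0,1,\ldots,N+1\}$. Moreover, here $P=0$ and so $\gamma_P=-\infty$, and the map $\gamma\mapsto\lambda=s-\gamma\sqrt{s}$ is a strictly decreasing bijection from $(\gamma_P,\sqrt{s})$ onto $(0,\infty)$. Consequently, $g_{s,F}^R$ is strictly decreasing on $(\gamma_P,\sqrt{s})$ if and only if $D_F^R(s,\lambda)$ is strictly increasing for $\lambda\in(0,\infty)$, and it is this last statement I would aim to prove.

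First I would identify $D_F^R$ explicitly. For this policy, $F(x)=\sum_{j=1}^{N-s+1}x^{j}$ is a polynomial, and a short telescoping computation shows that $1+(1-s/\lambda)F(\lambda/s)=(\lambda/s)^{N+1-s}$. Substituting into \eqref{3.5} yields
\[
D_F^R(s,\lambda)=\frac{a_{N+1}\,\lambda^{N+1}}{\sum_{k=0}^{N+1}a_k\,\lambda^{k}},
\]
where $a_k=1/k!$ for $0\leq k\leq s$ and $a_k=1/(s!\,s^{k-s})$ for $s\leq k\leq N+1$, all strictly positive. (This is of course just $\pi_{N+1}(\lambda)$ in the truncated system, as one also sees directly from the balance equations.)

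Then I would differentiate this rational function in $\lambda$: a quotient-rule computation, in which the $k=N+1$ terms between numerator and derivative-of-denominator cancel, gives
\[
\frac{d}{d\lambda}D_F^R(s,\lambda)=\frac{a_{N+1}\,\lambda^{N}}{\left(\sum_{k=0}^{N+1}a_k\lambda^k\right)^{2}}\sum_{k=0}^{N}(N+1-k)\,a_k\,\lambda^{k}.
\]
Every factor on the right is strictly positive for $\lambda>0$, since each $a_k>0$ and $N+1-k\geq 1$ for $0\leq k\leq N$. Hence $D_F^R(s,\lambda)$ is strictly increasing in $\lambda$, and combined with the first paragraph this yields the proposition. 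There is no real obstacle here; care is only needed in the telescoping identification of $D_F^R$ with $\pi_{N+1}$, after which the monotonicity is a transparent polynomial fact. An alternative route via the factorization of Lemma \ref{lemm1}, using $H_s(\gamma)=\sum_{n=0}^{N-s}(1-\gamma/\sqrt{s})^{n}$, seems possible but considerably more opaque than the direct calculation above.
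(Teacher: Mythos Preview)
Your proof is correct and takes a genuinely different route from the paper. The paper stays in QED coordinates: it writes $H_s(\gamma)$ as a finite geometric sum, substitutes into the factorization \eqref{4.9} to obtain a closed form for $g_{s,F}^R(\gamma)$ in terms of $f_s(\gamma)$, and then differentiates this composite expression directly in $\gamma$, checking that each of the resulting terms is negative (using $f_s>0$ and $f_s'<0$). You instead switch to $\lambda$-coordinates and recognize $D_F^R(s,\lambda)=\pi_{N+1}(\lambda)$ in the truncated $M/M/s/(N+1)$ system, after which the monotonicity is the classical fact that the blocking probability in a finite-capacity birth--death queue is increasing in the arrival rate, proved by your one-line quotient-rule computation. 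Your approach is more elementary and probabilistically transparent; the paper's approach stays within the analytic $f_s$--$H_s$ framework set up for general policies and so meshes more naturally with the surrounding material, but at the cost of a heavier derivative calculation.
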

The proof of Proposition \ref{propp6} is given in the Appendix \ref{aA}.

We now show how the various results given in this section can be used to prove Theorem \ref{thm7} and Theorem \ref{thm12}.
\subsection{Proof of Theorem \ref{thm7}}
Under condition \eqref{condA} we have $F(\lambda_P-0)=H_s(\gamma_P+0)=\infty$. It follows from \eqref{4.3} and  \eqref{4.8} that
\beq\label{618}
g_{s,F}(\gamma_P+0)=\frac{f_{s,F}(\gamma_P+0)}{1-\gamma_P/\sqrt{s}}=\frac{f_{s}(\gamma_P)}{1-\gamma_P/\sqrt{s}}\frac{1-\gamma_P/\sqrt{s}}{f_{s}(\gamma_P)/\sqrt{s}}=\sqrt{s}.
\eq
In a similar fashion it follows from \eqref{4444}, \eqref{4.4} and \eqref{4.9}, using $\gamma_P=-(1-P)P^{-1}\sqrt{s}$, that
\beq\label{619}
g_{s,F}^R(\gamma_P+0)=(1-P)\sqrt{s}.
\eq
Furthermore,
\beq\label{620}
D_F(s,\lambda)=\sqrt{s}g_{s,F}(\gamma), \quad D_F^R(s,\lambda)=\sqrt{s}g_{s,F}^R(\gamma)
\eq
 when $\lambda=s-\gamma\sqrt{s}$ while $g_{s,F}(\sqrt{s})=g_{s,F}^R(\sqrt{s})=0$ as $B(s,0)=0$. Then Theorem \ref{thm7}(i) follows from Proposition \ref{propp2}(i) while Theorem \ref{thm7}(ii) holds by the monotonicity assumption made in the discussion preceding Theorem \ref{thm7}.

\subsection{Proof of Theorem \ref{thm12}}
The function $\lambda(1-D_F(s,\lambda))$ depends continuously on $\lambda\in(0,\lambda_P)$, is positive for
$\lambda\in(0,\lambda_P)$, and satisfies
\beq\label{621}
\lim_{\lambda\downarrow 0}\lambda(1-D_F(s,\lambda))=0=\lim_{\lambda\uparrow \lambda_P}\lambda(1-D_F(s,\lambda)),
\eq
see \eqref{618} and \eqref{620}. This yields assertion (i).

A computation, using \eqref{4.4} and \eqref{4.3}, shows that
\beq\label{622}
\lambda(1-D_F(s,\lambda))=s-\sqrt{s}(\gamma+f_{s,F}(\gamma)),
\eq
when $\lambda=s-\gamma\sqrt{s}$. Hence, by Proposition \ref{propp2}(ii), we have that $\lambda(1-D_F^R(s,\lambda))$
is strictly increasing in $\lambda\in(0,\lambda_P)$. Furthermore,
\beq\label{623}
\lim_{\lambda\downarrow 0}\lambda(1-D_F^R(s,\lambda))=0,\quad \lim_{\lambda\uparrow \lambda_P}\lambda(1-D_F^R(s,\lambda))=s
\eq
by \eqref{618} and \eqref{620}. This yields assertion (ii).

Let us conclude this section with the following two observations. Concavity of
$\lambda(1-D(s,\lambda))$ when $p_k=p\in(0,1)$, $k\geq s$, follows from \eqref{622} and Proposition \ref{propp3}(ii).
Monotonicity of $D_F^R(s,\lambda)$ when $p_k$ is given by \eqref{4.3} follows from \eqref{620} and Proposition \ref{propp6}.

\section{Proof of Theorem \ref{thm:qedretrials}} \label{sec6}
\mbox{} \\[-9mm]
We prove Theorem \ref{thm:qedretrials} under condition \eqref{condA}, where we assume $P\in(0,1)$
(the case $P=0$ requires only minor modification of the analysis below).
By Theorem \ref{thmm1},
the generalized Cohen equation \eqref{genC}, written in QED coordinates as in \eqref{5.3}, has a unique solution $a_{s, F}(\gamma)$ for any $\gamma\in(0,\sqrt{s})$.

%
% Then
%\beq \label{6.1}
%\gamma_P={-}(1-P)\sqrt{s}/P\pr\infty,~~~~~~\gamma_{P,F}=\gamma_P+f_{s,F}^R(\gamma_P+0)\pr 0
%\eq
%as $s\pr\infty$. The second item in (\ref{6.1}) follows from
%\beq \label{6.2}
%0\leq\delta+f_{s,F}^R(\delta)\leq\delta+f_s(\delta)=O\Bigl(\frac{1}{\delta}\Bigr),~~~~~~\delta\pr{-}\infty,
%\eq
%see Proposition~\ref{propp1} and \cite[Theorem~10]{ajl}.
\begin{thm}\label{thmm2}For any $\gamma>0$,
\beq \label{6.3}
a_{\infty}(\gamma)\geq a_{s,F}(\gamma)=a_{\infty}(\gamma)+O\Bigl(\frac{1}{\sqrt{s}}\Bigr),~~~~~~s\pr\infty,
\eq
where the $O$ holds uniformly in any compact set of $\gamma\in(0,\infty)$, and where $a_{\infty}(\gamma)$ is the unique solution of
\beq \label{6.4}
a=f_{\infty}(\gamma-a)=\frac{\varp(\gamma-a)}{\Phi(\gamma-a)},
\eq
see {\rm \cite[Section~3.3]{ajl}}.
\end{thm}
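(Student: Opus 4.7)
The strategy has three parts: (i) a uniform $O(s^{-1/2})$ approximation of $f_{s,F}^R$ by $f_\infty(\delta) := \varp(\delta)/\Phi(\delta)$ on compacta, (ii) a mean-value argument transferring this to the fixed points $a_{s,F}(\gamma)$ and $a_\infty(\gamma)$, and (iii) a monotone fixed-point comparison against the pure-loss case ($F \equiv 0$) to obtain the one-sided inequality and the a priori boundedness used in (ii).

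\noindent\textbf{Step 1 (asymptotics of $f_{s,F}^R$).} Start from Lemma~\ref{lemm1}:
\[
f_{s,F}^R(\delta) \;=\; f_s(\delta)\,\frac{1 - (\delta/\sqrt{s})\,H_s(\delta)}{1 + (f_s(\delta)/\sqrt{s})\,H_s(\delta)}.
\]
Write $H_s(\delta) = F(1-\delta/\sqrt{s})/(1-\delta/\sqrt{s})$; condition \eqref{condA}, together with the $C^1$-regularity of $F$ near $1$ already exploited in the Lemma preceding Theorem~\ref{thmasss}, yields $H_s(\delta) = F(1) + O(s^{-1/2})$ uniformly on compacta of $\delta$. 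Combining with $f_s(\delta) = f_\infty(\delta) + O(s^{-1/2})$ (which follows from \eqref{4999} via $f_s(\delta) = (1-\delta/\sqrt{s})\sqrt{s}\,B(s,s-\delta\sqrt{s})$), both factors in the fraction above equal $1 + O(s^{-1/2})$, whence $f_{s,F}^R(\delta) = f_\infty(\delta) + O(s^{-1/2})$ uniformly on compacta.

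\noindent\textbf{Step 2 (transfer to the fixed points).} Put $\psi_s(a) := a - f_{s,F}^R(\gamma-a)$ and $\psi_\infty(a) := a - f_\infty(\gamma-a)$, so that $a_{s,F}(\gamma)$ and $a_\infty(\gamma)$ are the unique zeros of $\psi_s$ and $\psi_\infty$, respectively. Since $f_\infty'(\delta) = -f_\infty(\delta)(\delta + f_\infty(\delta)) \in (-1,0)$, the map $\psi_\infty$ is strictly increasing with $\psi_\infty'(a) = 1 + f_\infty'(\gamma-a) \geq c > 0$ on any compact range of $\gamma-a$. Using the a priori bound furnished by Step~3, $\gamma - a_{s,F}(\gamma)$ remains in a compact set as $\gamma$ varies over a compact subset of $(0,\infty)$ and $s$ varies; Step~1 then gives
\[
\psi_\infty(a_{s,F}) \;=\; \psi_\infty(a_{s,F}) - \psi_s(a_{s,F}) \;=\; f_{s,F}^R(\gamma-a_{s,F}) - f_\infty(\gamma-a_{s,F}) \;=\; O(s^{-1/2}).
\]
Since $\psi_\infty(a_\infty) = 0$ and $\psi_\infty' \geq c$, the mean-value theorem delivers $|a_\infty(\gamma) - a_{s,F}(\gamma)| = O(s^{-1/2})$ uniformly on compacta.

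\noindent\textbf{Step 3 and main obstacle.} By Proposition~\ref{propp1}(i), $f_{s,F}^R(\delta) \leq f_s(\delta)$; the standard monotone comparison for fixed points of $a = g(\gamma-a)$ with $g$ decreasing then yields $a_{s,F}(\gamma) \leq a_s(\gamma)$, where $a_s$ solves $a = f_s(\gamma-a)$ (the case $F \equiv 0$). The inequality $a_s(\gamma) \leq a_\infty(\gamma)$ is established in \cite[Sec.~3.3]{ajl}; chaining these supplies both the one-sided inequality $a_{s,F}(\gamma) \leq a_\infty(\gamma)$ and the uniform boundedness invoked in Step~2. I expect the main subtlety to lie in Step~1, namely in making the expansion of $H_s(\delta)$ uniform on the compact $\delta$-sets that arise; this is where condition \eqref{condA} enters essentially (in particular ruling out the boundary case $P = 1$). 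The remainder is a routine monotone fixed-point argument, the only delicate bookkeeping being that Step~3 must be carried out before Step~2 to supply the boundedness needed there.
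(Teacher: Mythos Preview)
Your proposal is correct and follows essentially the same approach as the paper: both establish $f_{s,F}^R(\delta)=f_\infty(\delta)+O(s^{-1/2})$ uniformly on compacta, prove the one-sided bound $a_{s,F}(\gamma)\leq a_\infty(\gamma)$ first to confine $\gamma-a_{s,F}(\gamma)$ to a compact set, and then run a mean-value argument exploiting $1+f_\infty'(\delta)\geq c>0$ there. The only cosmetic difference is your Step~3, where you route the inequality through the intermediate quantity $a_s(\gamma)$ (the $F\equiv 0$ fixed point) via $a_{s,F}\leq a_s\leq a_\infty$; the paper instead uses $f_{s,F}^R\leq f_\infty$ directly together with the increasingness of $\delta+f_\infty(\delta)$ to conclude $a_{s,F}\leq a_\infty$ in one step.
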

\begin{proof}We have $f_{s,F}^R(\delta)\leq f_s(\delta)<f_{\infty}(\delta)$ and $f_{s,F}^R(\delta)=f_{\infty}(\delta)+O(\frac{1}{\sqrt{s}})$ uniformly in any compact set of $\delta\in\dR$, see (\ref{4.9}), Proposition~\ref{propp1}(i) and \cite[Propositions 5 and 6]{ajl}. Let $\gamma>0$.
%We can take $S$ so large that $\gamma_{P,F}\leq\frac12\gamma$ for all $s\geq S$, guaranteeing the existence of $a_{s,F}(\gamma)$ for all $s\geq S$. As in the proof of Proposition~\ref{propp4},
We prove below that $a_{s,F}(\gamma)\leq a_{\infty}(\gamma)$. Therefore, there is an $M>0$ such that
\beq \label{6.5}
f_{\infty}(\gamma-a)-\frac{M}{\sqrt{s}}\leq f_{s,F}^R(\gamma-a)\leq f_{\infty}(\gamma-a)
\eq
holds for $a\in[0,a_{\infty}(\gamma)]$ and all $s\geq 1$. Since $a+f_{\infty}(\gamma-a)$ is convex in $a$ and $f_{\infty}'(\delta)>{-}1$, $\delta\in\dR$, see \cite[(21)]{ajl}, it follows from the mean value theorem that there is an $\eta>0$ such that
\beq \label{6.6}
f_{\infty}(\gamma-a)-a\geq\eta(a_{\infty}(\gamma)-a),~~~~~~a\in[0,a_{\infty}(\gamma)],
\eq
where it is observed that $f_{\infty}(\gamma-a)-a=0$ at $a=a_{\infty}(\gamma)$. Then
\begin{eqnarray} \label{6.7}
0=f_{s,F}^R(\gamma-a_{s,F}(\gamma))-a_{s,F}(\gamma) & \geq & f_{\infty}(\gamma-a_{s,F}(\gamma))-a_{s,F}(\gamma)- \frac{M}{\sqrt{s}} \nonumber \\[3mm]
& \geq & \eta(a_{\infty}(\gamma)-a_{s,F}(\gamma))-\frac{M}{\sqrt{s}},
\end{eqnarray}
and so
\beq \label{6.8}
a_{s,F}(\gamma)\geq a_{\infty}(\gamma)-\frac{M}{\eta\sqrt{s}}.
\eq
In the above argument, $M$ and $\eta$ can be chosen independently of $\gamma$ in any compact subset of $(0,\infty)$, and so the result follows.

We still have to show that  $a_{s,F}(\gamma)\leq a_{\infty}(\gamma)$. We have from $f_{s,F}^R(\delta)\leq f_s(\delta)$ that
\beq
f_\infty(\gamma-a_{s,F}(\gamma))\geq f_{s,F}^R(\gamma-a_{s,F}(\gamma))=a_{s,F}(\gamma).
\eq
Therefore,
\beq\label{78}
\gamma-a_\infty(\gamma)+f_\infty(\gamma-a_{\infty}(\gamma))=\gamma\geq  \gamma-a_{s,F}(\gamma)+f_\infty(\gamma-a_{s,F}(\gamma)).
\eq
Now $\delta+f_\infty(\delta)$ is increasing, see \cite[(21)]{ajl}, and so it follows from \eqref{78} that $\gamma-a_\infty(\gamma)\geq \gamma-a_{s,F}(\gamma)$, i.e.,~$a_{s,F}(\gamma)\leq a_{\infty}(\gamma)$.
\end{proof}

The limiting behavior of $D_F(s,\lambda+\Omega)$ and $D_F^R(s,\lambda+\Omega)$  is now easily found. We have, see \eqref{4.3} and \eqref{4999a}, for $\gamma>0$
\begin{eqnarray} \label{6.9}
& \mbox{} & \hspace*{-1.4cm}\frac{1}{\sqrt{s}}\,D_F^{-1}(s,s-(\gamma-a_{s,F}(\gamma)))=\frac{1-(\gamma-a_{s,F}(\gamma))/\sqrt{s}} {f_{s,F}(\gamma-a_{s,F}(\gamma))}\nonumber \\[3mm]
& & \hspace*{-1.4cm}\pr~\Bigl(1+H(0)\,\frac{\phi(\gamma-a_{\infty}(\gamma))}{\Phi(\gamma-a_{\infty}(\gamma))}\Bigr)^{-1}= ((1+H(0))\,a_{\infty}(\gamma))^{-1},~~~~s\pr\infty\,.
\end{eqnarray}
Similarly, % see (\ref{4.9}) and (\ref{4.25}),
\begin{eqnarray} \label{6.10}
& \mbox{} & \frac{1}{\sqrt{s}}\,D_F^{-R}(s,s-(\gamma-a_{s,F}(\gamma)))=\frac{1-(\gamma-a_{s,F}(\gamma))/\sqrt{s}} {f_{s,F}^R(\gamma-a_{s,F}(\gamma))} \nonumber \\[3mm]
& & =~\frac{1}{a_{s,F}(\gamma)}\,(1-(\gamma-a_{s,F}(\gamma))/\sqrt{s})\pr a_{\infty}^{-1}(\gamma),~~~~~~ s\pr\infty,
\end{eqnarray}
where it also has been used that $a=a_{s,F}(\gamma)$ satisfies (\ref{5.3}).

\section{Proof of Theorem \ref{thmasss}}\label{proofthm}
We give the proof of \eqref{416}, \eqref{418} in detail; the proof of \eqref{417} and \eqref{419} being quite similar. Take any $\gamma_1,\gamma_2\in\mathbb{R}$ with
\beq\label{81}
\gamma_1<\gamma_{\infty,F}(\eps)<\gamma_2.
\eq
From strict decreasingness of $g_\infty(\gamma)$ and the definition of $\gamma_{\infty, F}(\eps)$, we have
\beq\label{82}
(1+F(1))g_\infty(\gamma_2)<(1+F(1))g_{\infty}(\gamma_{\infty,F}(\eps))=\eps<(1+F(1))g_\infty(\gamma_1).
\eq
Because $g_{s, F}(\gamma)\to (1+F(1))g_\infty(\gamma)$ as $s\to\infty$, we have from \eqref{82} that
\beq\label{84}
g_{s, F}(\gamma_2)<\eps<g_{s, F}(\gamma_1)
\eq
when $s$ is large.
By monotonicity of $g_{s,F}$, this implies that $\gamma_{s,F}(\eps)\in[\gamma_1,\gamma_2]$ for large $s$. Since $\gamma_1$ and $\gamma_2$ in \eqref{81} are arbitrary, it follows that $\gamma_{s,F}(\eps)\to\gamma_{\infty,F}(\eps)$ as $s\to\infty$. In particular, $\gamma_{s,F}(\eps)$ is bounded in $s\geq 1$.

We next show from the weakened form
\beq\label{85}
g_{s, F}(\gamma)=(1+F(1))g_\infty(\gamma)+O(s^{-1/2})
\eq
of \eqref{4999a}
that
\beq\label{86}
\gamma_{s,F}(\eps)=\gamma_{\infty,F}(\eps)+O(s^{-1/2}).
\eq
To that end, we consider \eqref{85} with $\gamma=\gamma_{s,F}(\eps)$ and write
\beq\label{87}
g_\infty(\gamma_{s,F}(\eps))=g_\infty(\gamma_{\infty,F}(\eps))
+(\gamma_{s,F}(\eps)-\gamma_{\infty,F}(\eps))g_\infty'(\gamma_{\infty,F}(\eps))+O((\gamma_{s,F}(\eps)-\gamma_{\infty,F}(\eps))^2).
\eq
Using this, together with
\beq\label{88}
g_{s,F}(\gamma_{s,F}(\eps))=\eps=(1+F(1))g_\infty(\gamma_{\infty,F}(\eps)),
\eq
in \eqref{85} we get
\beq\label{89}
\eps=\eps+(1+F(1))(\gamma_{s,F}(\eps)-\gamma_{\infty,F}(\eps))g_\infty'(\gamma_{\infty,F}(\eps))+O((\gamma_{s,F}(\eps)-\gamma_{\infty,F}(\eps))^2)+O(s^{-1/2}).
\eq
It is known, see \cite[(21)]{ajl}, that $g_\infty'(\gamma)$ is negative and bounded away from $0$ when $\gamma$ is in a bounded set. From \eqref{89} we therefore get \eqref{86}.

We finally show \eqref{416}, and for this we repeat the argument for showing \eqref{86}, but now using the full strength of \eqref{4999a} with $\gamma=\gamma_{s,F}(\eps)$. Using \eqref{86} in \eqref{87} yields
\beq\label{810}
g_\infty(\gamma_{s,F}(\eps))=g_\infty(\gamma_{\infty,F}(\eps))
+(\gamma_{s,F}(\eps)-\gamma_{\infty,F}(\eps))g_\infty'(\gamma_{\infty,F}(\eps))+O(s^{-1}).
\eq
Furthermore, again by \eqref{86},
\beq\label{811}
h_{\infty,F}(\gamma_{s,F}(\eps))=h_{\infty,F}(\gamma_{\infty,F}(\eps))+O(s^{-1/2}).
\eq
When we use this in \eqref{4999a} with $\gamma=\gamma_{s,F}(\eps)$ together with \eqref{88}, we obtain
\beq\label{812}
\eps=\eps+(1+F(1))(\gamma_{s,F}(\eps)-\gamma_{\infty,F}(\eps))g_\infty'(\gamma_{\infty,F}(\eps))+O(s^{-1})+\frac{1}{\sqrt{s}}h_{\infty,F}(\gamma_{\infty,F}(\eps))+O(s^{-1}).
\eq
From this \eqref{416} and \eqref{418} follow at once.

%\section{Conclusions and future research} \label{sec8}
%In this paper we have established QED limits for performance measures in models with admission control and retrials, and subsequently used these limits to address dimensioning problems. From Theorem
%\ref{thm:qedretrials} it can be seen that the QED limits  for the stationary delay probability resembles the QED limit for the blocking probability in the Erlang loss model. Hence, although our system allows for a rich class of admission control policies, the QED limit is as if all customers in the that meet a queue were blocked. The intuition behind this results is as follows: For systems with admission control in the Halfin-Whitt regime, the
%

\bibliographystyle{plain} % outcomment this and next line in Case 1

\appendix

\section{Remaining proofs, except proof of Theorem~\ref{thmm3}}\label{aA}
\mbox{} \\[-9mm]

We start by recalling some basic properties, shown in \cite{ajl}, of the functions $f_s$ and $g_s$ given in (\ref{4.7}). We have
\bi{a.0}
\ITEM{a.} $f_s(\gamma)$ is strictly convex and decreases strictly in $-\infty<\gamma\leq\sqrt{s}$ from $+\infty$ to 0, and $\gamma+f_s(\gamma)$ increases strictly in $-\infty<\gamma\leq\sqrt{s}$ from $0$ to $\sqrt{s}$. Furthermore, $\gamma+f_s(\gamma)=O(\frac{1}{\gamma})$, $\gamma\pr{-}\infty$.
\ITEM{b.} $g_s(\gamma)$ decreases strictly in $-\infty<\gamma\leq\sqrt{s}$ from $\sqrt{s}$ to 0.
\ITEM{c.} $f_s(\gamma)$ and $g_s(\gamma)$ increase in $s\geq1$ to $f_{\infty}(\gamma)=g_{\infty}(\gamma)=\varp(\gamma)/\Phi(\gamma)$ uniformly in any compact set of $\gamma\in\dR$.
\ei
{\bf Proof of Proposition \ref{propp1}.}~We have for $0<\gamma<\sqrt{s}$ that
\beq \label{EC.1}
H_s(\gamma)\leq\sum_{n=0}^{\infty}\,\Bigl(1-\frac{\gamma}{\sqrt{s}}\Bigr)^n=\frac{\sqrt{s}}{\gamma},
\eq
with equality if and only if $p_{s+n}=1$, $n=0,1,...\,$. Hence, $1-\gamma\,H_s(\gamma)/\sqrt{s}\geq0$ for $0<\gamma<\sqrt{s}$, and evidently $1-\gamma\,H_s(\gamma)/\sqrt{s}\geq0$ for $\gamma_P<\gamma\leq0$. Therefore, from (\ref{4.9}), $f_{s,F}^R(\gamma)\geq0$, $\gamma_P<\gamma<\sqrt{s}$, with equality for any $\gamma$ if and only if $p_{s+n}=1$, $n=0,1,...\,$. Next, we write (\ref{4.9}) for $\gamma_P<\gamma<\sqrt{s}$ as
\beq \label{EC.2}
f_{s,F}^R(\gamma)={-}\gamma+\frac{\gamma+f_s(\gamma)}{1+\dfrac{1}{\sqrt{s}}\,f_s(\gamma)\,H_s(\gamma)},
\eq
and then it follows from $\gamma+f_s(\gamma)>0$ that $f_{s,F}^R(\gamma)>{-}\gamma$. This proves the first inequality in {\rm (\ref{4.11})}.

Next, we write (\ref{4.9}) for $\gamma_P<\gamma<\sqrt{s}$ as
\beq \label{EC.3}
f_{s,F}^R(\gamma)=f_s(\gamma)-\frac{1}{\sqrt{s}}\,f_s(\gamma)\,H_s(\gamma)\,\frac{\gamma+f_s(\gamma)} {1+\dfrac{1}{\sqrt{s}}\,f_s(\gamma)\,H_s(\gamma)},
\eq
and it follows from $\gamma+f_s(\gamma)>0$ that $f_{s,F}^R(\gamma)\leq f_s(\gamma)$, with equality if and only if $H_s(\gamma)=0$ if and only if $p_{s+n}=0$, $n=0,1,...\,$. This proves the second inequality in {\rm (\ref{4.11})}.

Next, we write (\ref{4.8}) for $\gamma_P<\gamma<\sqrt{s}$ as
\beq \label{EC.4}
f_{s,F}(\gamma)=f_s(\gamma)+f_s(\gamma)\,H_s(\gamma)\,\frac{1-(\gamma+f_s(\gamma))/\sqrt{s}} {1+f_s(\gamma)\,H_s(\gamma)/\sqrt{s}},
\eq
and it follows from $\gamma+f_s(\gamma)<\sqrt{s}$ that $f_{s,F}(\gamma)\geq f_s(\gamma)$, with equality if and only if $H_s(\gamma)=0$ if and only if $p_{s+n}=0$, $n=0,1,...\,$. This proves the third inequality in {\rm (\ref{4.11})}.

Next, we write (\ref{4.8}) for $\gamma_P<\gamma<\sqrt{s}$ as
\beq \label{EC.5}
f_{s,F}(\gamma)=\sqrt{s}-\gamma-\sqrt{s}\,\frac{\sqrt{s}-\gamma-f_s(\gamma)} {\sqrt{s}+f_s(\gamma)\,H_s(\gamma)},
\eq
and it follows from $\gamma+f_s(\gamma)<\sqrt{s}$ that $f_{s,F}(\gamma)<\sqrt{s}-\gamma$. Since $\sqrt{s}-\gamma>f_s(\gamma)$ for $\gamma<\sqrt{s}$, we have that the function $x\geq0\mapsto(1+x(1-\gamma/\sqrt{s}))/(1+x\,f_s(\gamma)/\sqrt{s})$ is strictly increasing. From (\ref{4.8}) and (\ref{EC.1}) it then follows (with $x=H_s(\gamma)\leq\sqrt{s}/\gamma$) that $f_{s,F}(\gamma)\leq \sqrt{s}\,f_s(\gamma)/(\gamma+f_s(\gamma))$ for $0<\gamma<\sqrt{s}$, with equality if and only if $p_{s+n}=0$, $n=0,1,...\,$. Furthermore, for $\gamma\leq0$, we have
\beq \label{EC.5a}
\sqrt{s}-\gamma-\frac{\sqrt{s}\,f_s(\gamma)}{\gamma+f_s(\gamma)}= \frac{\gamma(\sqrt{s}-\gamma-f_s(\gamma))}{\gamma+f_s(\gamma)}\leq0,
\eq
with equality if and only if $\gamma=0$. This proves the fourth inequality in {\rm (\ref{4.11})}.

The cases of equality in the inequalities in {\rm (\ref{4.11})} have been indicated already along with their proofs, and this settles Proposition~\ref{propp1}, (ii)--(iv).

Proposition~\ref{propp1}(v) follows from 1(i) and the fact that $f_s(\sqrt{s})=0$.

Proposition~\ref{propp1}(vi)--(vii) follow from the representations (\ref{4.9}) and (\ref{4.8}) and the fact that $H_s(\gamma)$ increases to $H_s(\gamma_P+0)$ as $\gamma$ decreases to $\gamma_P$ by non-negativity of all $p_k$. This completes the proof of Proposition~\ref{propp1}. \\ \\
{\bf Note.}~~We have the following consequences of (\ref{EC.3}), (\ref{EC.4}) and $\gamma+f_s(\gamma)=O(\frac{1}{\gamma})$, $\gamma\pr{-}\infty$.
\bi{a.0}
\ITEM{a.} $f_{s,F}^R(\gamma)=f_s(\gamma)+O(\frac{1}{\gamma})$, $\gamma_P<\gamma<0$,
\ITEM{b.} $f_{s,F}(\gamma)=f_s(\gamma)+\sqrt{s}+O(\frac{1}{\gamma})$, $\gamma_P<\gamma<0$, when $F\not\equiv0$,
\ITEM{c.} $g_{s,F}^R(\gamma),g_{s,F}(\gamma)=\sqrt{s}+O(\frac{1}{\gamma})$, $\gamma_P<\gamma<0$.
\ei
These results are in particular relevant when $P=0$ so that $\gamma_P={-}\infty$. \\ \\
{\bf Proof of Proposition \ref{propp2}.}
{(i)} Since $f_{s,F}(\gamma)=(1-\gamma/\sqrt{s})\,g_{s,F}(\gamma)$, it is sufficient to show that $g_{s,F}(\gamma)$ is strictly decreasing. We have from (\ref{EC.5}) that
\beq \label{EC.6}
g_{s,F}(\gamma)=\frac{f_{s,F}(\gamma)} {1-\gamma/\sqrt{s}}=\sqrt{s}\Bigl(1-\frac{1-\dfrac{1}{\sqrt{s}}\,g_s(\gamma)} {1+\dfrac{1}{\sqrt{s}}\,f_s(\gamma)\,H_s(\gamma)}\Bigr).
\eq
Now $g_s(\gamma)$ strictly decreases in $-\infty<\gamma\leq\sqrt{s}$ from $\sqrt{s}$ to 0, and $f_s(\gamma)\,H_s(\gamma)$ is non-negative and decreasing in $\gamma_P<\gamma\leq\sqrt{s}$. It follows that
\beq \label{EC.7}
\frac{1-\dfrac{1}{\sqrt{s}}\,g_s(\gamma)} {1+\dfrac{1}{\sqrt{s}}\,f_s(\gamma)\,H_s(\gamma)}
\eq
is non-negative and strictly increasing in $\gamma_P<\gamma\leq\sqrt{s}$, and the proof is complete.\\
{(ii)} We have that $\gamma+f_s(\gamma)$ is positive and strictly increasing in $-\infty<\gamma\leq\sqrt{s}$, and $f_s(\gamma)\,H_s(\gamma)$ is non-negative and decreasing in $\gamma_P<\gamma\leq\sqrt{s}$. It follows that
\beq \label{EC.8}
\frac{\gamma+f_s(\gamma)} {1+\dfrac{1}{\sqrt{s}}\,f_s(\gamma)\,H_s(\gamma)}
\eq
is strictly increasing in $\gamma_P<\gamma\leq\sqrt{s}$. Then it follows from the representation (\ref{EC.2}) of $f_{s,F}^R$ that $\gamma+f_{s,F}^R(\gamma)$ is strictly increasing in $\gamma_P<\gamma\leq\sqrt{s}$.
\mbox{} \\ \\
{\bf Proof of Proposition \ref{propp3}.} With $p_k=p\in(0,1)$ for $k\geq s$, we have $f_{s,F}$ and $f_{s,F}^R$ as in (\ref{4.16}) so that, in particular, $f_{s,F}^R=(1-p)\,f_{s,F}$.

{(i)} We have that $\gamma+f_s(\gamma)$ is positive and strictly increasing in $-\infty<\gamma\leq\sqrt{s}$ and that $f_s(\gamma)$ is positive and strictly decreasing in $-\infty<\gamma\leq\sqrt{s}$. Hence, from (\ref{4.16}), both $f_{s,F}(\gamma)$ and $f_{s,F}^R(\gamma)$ are strictly decreasing in $\gamma_P<\gamma\leq\sqrt{s}$. We next show (strict) convexity of $f_{s,F}^R$. We write (\ref{4.16}) for $\gamma_P<\gamma<\sqrt{s}$ as
\beq \label{EC.9}
f_{s,F}^R(\gamma)=\frac{f_s(\gamma)}{1+b(\gamma+f_s(\gamma))}~;~~~~~~b=\frac{p}{(1-p)\,\sqrt{s}}=\frac{-1}{\gamma_P} >0,
\eq
and we compute for $\gamma_P<\gamma\leq\sqrt{s}$
\begin{eqnarray} \label{EC.10}
& \mbox{} & \hspace*{-1cm}(f_{s,F}^R)''(\gamma)=\Bigl(\frac{f_s'(\gamma)(1+b\gamma)-bf_s(\gamma)} {(1+b(\gamma+f_s(\gamma)))^2} \Bigr)'~\nonumber \\[3.5mm]
& & \hspace*{-1cm}=~\frac{f_s''(\gamma)(1{+}b\gamma)(1{+}b(\gamma{+}f_s(\gamma))){-}2b(f_s'(\gamma)(1{+}b\gamma)
{-}bf_s(\gamma))(1{+}f_s'(\gamma))} {(1+b(\gamma+f_s(\gamma)))^3}.
\end{eqnarray}
From
\beq \label{EC.11}
f_s''(\gamma)\,,~~1+b\gamma>0\,,~~f_s(\gamma>\max\{0,{-}\gamma\}\,,~~-1<f_s'(\gamma)<0
\eq
for $\gamma_P<\gamma<\sqrt{s}$, it follows that $(f_{s,F}^R)''(\gamma)>0$ for $\gamma_P<\gamma<\sqrt{s}$. Hence, $f_{s,F}^R(\gamma)$ is strictly convex in $\gamma_P<\gamma<\sqrt{s}$, and so is $f_{s,F}(\gamma)$, see \eqref{4.16}.

The values of $f_{s,F}$, $f_{s,F}^R$ at $\gamma=\gamma_P+0$ and $\gamma=\sqrt{s}$, as given in and below (\ref{4.17}), follow from Proposition 1(v)--(vii) and the fact that $H_s(\gamma_P+0)=\infty$, see (\ref{4.15}), in this case.

{(ii)} We have for $\gamma_P<\gamma<\sqrt{s}$ from (\ref{4.3}), (\ref{4.4}), (\ref{4.7}) and (\ref{4.16}) that
\beq \label{EC.12}
g_{s,F}(\gamma)=\frac{g_s(\gamma)} {1-p+\dfrac{1}{\sqrt{s}}\,p(\gamma+f_s(\gamma))}.
\eq
Now $g_s(\gamma)$ is strictly decreasing and positive in $-\infty<\gamma<\sqrt{s}$ and $\gamma+f_s(\gamma)$ is strictly increasing and positive in $-\infty<\gamma<\sqrt{s}$. Hence, $g_{s,F}(\gamma)$ is positive and strictly decreasing in $\gamma_P<\gamma<\sqrt{s}$, and so is $g_{s,F}^R(\gamma)=(1-p)\,g_{s,F}(\gamma)$. The values assumed by $g_{s,F}$ and $g_{s,F}^R$ at $\gamma=\gamma_P+0$ and $\gamma=\sqrt{s}$ follow from (i).

{(iii)} We have from (i) that $\gamma+f_{s,F}^R(\gamma)$ is strictly convex in $\gamma_P<\gamma<\sqrt{s}$, and $(f_{s,F}^R)'(\gamma)>{-}1$ for $\gamma_P<\gamma\leq\sqrt{s}$ by Proposition~\ref{propp2}(ii). The values assumed by $\gamma+f_{s,F}^R(\gamma)$ at $\gamma=\gamma_P+0$ and $\gamma=\sqrt{s}$ follow from (i).

{(iv)} Strict convexity of $\gamma+f_{s,F}(\gamma)$ in $\gamma_P<\gamma<\sqrt{s}$ follows from (i). The values assumed by $\gamma+f_{s,F}(\gamma$ at $\gamma=\gamma_P+0$ and $\gamma=\sqrt{s}$ also follow from (i). From these values it is seen that $\gamma+f_{s,F}(\gamma)$ is non-monotonic in $\gamma_P<\gamma<\sqrt{s}$.

This completes the proof of Proposition \ref{propp3}.

\mbox{} \\
{\bf Note.}~~It follows from the expression for $(f_{s,F}^R)'(\gamma)$ in (\ref{EC.10}) that
\beq \label{EC.13}
(f_{s,F}^R)'(\gamma_P+0)=\frac{\gamma_P}{f_s(\gamma_P)}>{-}1,
\eq
and that
\beq \label{EC.14}
(f_{s,F})'(\gamma_P+0)=\frac{1}{1-p}~\frac{\gamma_P}{f_s(\gamma_P)}<{-}1,
\eq
where the two inequalities follow from $0<\delta+f_s(\delta)<\sqrt{s}$ and the definition of $\gamma_P$, see (\ref{EC.9}). \\

\noindent {\bf Proof of Proposition \ref{propp6}.}
With $p_k=1$, $s\leq k\leq N$, $p_k=0$, $k>N$, we have $P=0$ and $H_s(\gamma)=\gamma^{-1}\sqrt{s}(1-(1-\gamma/\sqrt{s})^{N+1})$. For $\gamma\leq \sqrt{s}$ it then follows from (\ref{4.9}) and (\ref{4.4}) that
\beq \label{EC.15}
g_{s,F}^R(\gamma)=\frac{(1-\gamma/\sqrt{s})^N\,f_s(\gamma)} {1+\dfrac{1}{\sqrt{s}}\,f_s(\gamma)\,\dsum_{n=0}^N\, (1-\gamma/\sqrt{s})^n}.
\eq
A computation shows that
\begin{eqnarray} \label{EC.16}
& \mbox{} & \sqrt{s}(g_{s,F}^R)'(\gamma)\,\frac{(\sqrt{s}+f_s(\gamma)\,H_s(\gamma))^2} {(1-\gamma/\sqrt{s})^{N-1}}~ \nonumber \\[3mm]
& & =~{-}N\,f_s(\gamma)+\Bigl(1-\frac{\gamma}{\sqrt{s}}\Bigr)\sqrt{s}\,f_s'(\gamma)-\frac{1}{\sqrt{s}}\, f_s^2(\gamma) \,\sum_{n=0}^N\,(N-n)\Bigl(1-\frac{\gamma}{\sqrt{s}}\Bigr)^n,
\end{eqnarray}
and all terms on the second line of (\ref{EC.16}) are negative. Hence, $g_{s,F}^R$ is strictly decreasing, as required. \\

%The values assumed by $g_{s,F}^R(\gamma)$ at $\gamma={-}\infty$ and $\gamma=\sqrt{s}$ follow from (\ref{EC.15}) and $f_s(\gamma)\pr\infty$ as $\gamma\pr{-}\infty$.
\noindent{\bf Some examples.}~~Take $H_s(\gamma)$ in \eqref{4.10} of the form
\beq \label{EC.17}
H_s(\gamma)=\sum_{n=0}^\infty\:q_n\,\Bigl(1-\frac{\gamma}{\sqrt{s}}\Bigr)^n, \quad  q_n=\frac{\eps\,P^n}{(n+1)^{\alpha}},
\eq
with $0<\eps<1$, $\alpha>0$, $0<P<1$.
Then $0<q_n<1$, $q_n$ is decreasing in $n$ and ${{\rm lim}\ {\rm sup}}\ q_n^{1/(n+1)}=P$. When $\alpha>1$, we have that $H_s(\gamma_P+0)<\infty$. Consequently, in this case
\beq
1-P<D_F^R(s,\lambda_P-0)<D_F(s,\lambda_P-0)<1.
\eq
When $1<\alpha<2$ we have that $H_s'(\gamma_P+0)=-\infty$, and in this case
$(D_F^R)'(s,\lambda_P-0)=-\infty$, showing that $D_F^R(s,\lambda)$ is increasing for $\lambda$ close to $\lambda_P$. An example where $F(\frac1P-0)=H_s(\gamma_P+0)=\infty$ and $g_{s,F}^R(\gamma_P+0)=\infty=-(D_F^R)'(s,\lambda_P-0)$ is provided by the choice $s=1$ and $q_n=\frac{1}{10}{-1/2 \choose n}(-\frac12)^n$, $n=0,1,\dots$., yielding $H_1(\gamma)=\frac{1}{10}(1+\gamma)^{-1/2}$ with $P=2$, $\gamma_P=-1$. \\

\noindent{\bf Proof of Lemma \ref{lemdec}.}~~We have from (\ref{4.7}) and (\ref{4.8}) that
\begin{eqnarray} \label{EC.19}
& \mbox{} & \frac{1}{\sqrt{s}}\,D_F^{-1}(s,s-\gamma\sqrt{s})=\frac{1-\gamma/\sqrt{s}}{f_{s,F}(\gamma)}~ \nonumber \\[3.5mm]
& & =~\frac{1-\gamma/\sqrt{s}}{f_s(\gamma)}~\frac{1+\dfrac{1}{\sqrt{s}}\,f_s(\gamma)\,H_s(\gamma)} {1+(1-\gamma/\sqrt{s})\,H_s(\gamma)}~ \nonumber \\[3.5mm]
& & =~\frac{1-\gamma/\sqrt{s}}{f_s(\gamma)}~\dfrac{1-\dfrac{1}{\sqrt{s}}\,\gamma\,H_s(\gamma)+\dfrac{1}{\sqrt{s}} \,H_s(\gamma)(f_s(\gamma)+\gamma)}{1+(1-\gamma/\sqrt{s})\,H_s(\gamma)}~ \nonumber \\[3.5mm]
& & =~(1-p(\gamma))\,\frac{1-\gamma/\sqrt{s}}{f_s(\gamma)}+p(\gamma)\,\frac{1}{\sqrt{s}}~\frac{1-\gamma/\sqrt{s}} {f_s(\gamma)}\,(f_s(\gamma)+\gamma),
\end{eqnarray}
with $p(\gamma)=q_\lambda$ where $q_\lambda$ is given in (\ref{4.31}). Now, by (\ref{4.7}),
\beq \label{EC.20}
\frac{1-\gamma/\sqrt{s}}{f_s(\gamma)}=\frac{1}{\sqrt{s}}\,B^{-1}(s,s-\gamma\sqrt{s}),
\eq
and
\begin{align} \label{EC.21}
\frac{1-\gamma/\sqrt{s}}{f_s(\gamma)}\,(f_s(\gamma)+\gamma) & =  1-\frac{\gamma}{\sqrt{s}}+\frac{\gamma}{\sqrt{s}}\, B^{-1}(s,s-\gamma\sqrt{s})
 =  C^{-1}(s,s-\gamma\sqrt{s}).
\end{align}
Then (\ref{4.29}) follows from (\ref{EC.19})--(\ref{EC.21}). The proof of (\ref{4.30}) is similar.

We observe that $0\leq p(\gamma)\leq1$, which follows from (\ref{EC.1}). \\

\noindent{\bf Proof of Theorem~\ref{thmm1}.}
From \cite[Section 3]{fa} one can extract a proof of Theorem \ref{thmm1}. This proof depends on basic properties of birth-death processes. The proof that we give here is presented in QED coordinates and uses the analytic properties of $f_{s,F}^R$, such as given in Propositions \ref{propp1} and \ref{propp2}. The proof is given under general conditions, so that also $P=1$ and $F(\frac1P-0),H_s(\gamma_P+0)<\infty$ is allowed.

We distinguish the following cases:

\noindent{\bf Case a.}~~$P=0$. Then $\gamma_P={-}\infty$, and both $H_s(\gamma)$ and $f_{s,F}^R(\gamma)$ are well-defined and analytic in $-\infty<\gamma\leq\sqrt{s}$. Set $\gamma_{P,F}=0$.

\noindent{\bf Case b.}~~$P\in(0,1]$. Then $\gamma_P={-}(1-P)\sqrt{s}/P\in({-}\infty,0]$, and both $H_s(\gamma)$ and $f_{s,F}^R(\gamma)$ are well-defined and analytic in $\gamma_P<\gamma\leq\sqrt{s}$. In this case~b, we distinguish the subcases
\bi{b2.0}
\ITEM{b1.} $H_s(\gamma_P+0)<\infty$. Then by Abel's theorem, $H_s(\gamma)$ is continuous in $\gamma_P\leq\gamma\leq\sqrt{s}$, and so is $f_{s,F}^R(\gamma)$.
\ITEM{b2.} $H_s(\gamma_P+0)=\infty$.
\ei
In these cases we have from (\ref{4.9}) and (\ref{EC.2})
\beq \label{5.4}
f_{s,F}^R(\gamma_P+0)={-}\gamma_P+\frac{\gamma_P+f_s(\gamma_P)}{1+\dfrac{1}{\sqrt{s}}\,f_s(\gamma_P)\, H_s(\gamma_P+0)}~.
\eq
Hence $f_{s,F}^R(\gamma_P+0)>{-}\gamma_P$ in case~b1 while $f_{s,F}^R(\gamma_P+0)={-}\gamma_P$ in case b2. Set $\gamma_{P,F}=f_{s,F}^R(\gamma_P+0)+\gamma_P$. \\

In case $P=0$, we have by Proposition~\ref{propp1} and \ref{propp2} that
\beq \label{EC.22}
f_{s,F}^R(\gamma)>{-}\gamma,~~~~~~(f_{s,F}^R)'(\gamma)>{-}1,
\eq
for $\gamma<\sqrt{s}$, and it follows as in the proof of \cite{ajl}, Theorem~\ref{thmm3} in Section~4.3 that for any $\gamma\in(0,\sqrt{s})$ there is a unique solution $a$ of the equation $a=f_{s,F}^R(\gamma-a)$. This solution, $a_{s,F}(\gamma)$, satisfies $a_{s,F}(\gamma)\pr{+}\infty$ as $\gamma\downarrow0$. For if $b:=\lim{\rm inf}_{\gamma\downarrow0}\,a_{s,F}(\gamma)<\infty$, we would have $b=\lim{\rm inf}_{\gamma\downarrow0}\, f(\gamma-a_{s,F}(\gamma))=f({-}b)$, contradicting the first item in (\ref{EC.22}). Similarly, it can be shown, compare the beginning of the proof of \cite{ajl}, Theorem~8 in Section~4.8, by considering $c:=\lim{\rm sup}_{\gamma\uparrow\sqrt{s}}\,a_{s,F}(\gamma)$, that $a_{s,F}(\gamma)\pr0$ as $\gamma\uparrow\sqrt{s}$.
Assume now that $P\in(0,1]$, in which we exclude the case that $p_k=1$, $k\geq s$. Now (\ref{EC.22}) holds for $\gamma_P<\gamma<\sqrt{s}$. Let $\gamma\in(\gamma_{P,F},\sqrt{s})$. We have
\beq \label{EC.23}
f_{s,F}^R(\gamma-0)>0,
\eq
while
\beq \label{EC.24}
f_{s,F}^R(\gamma-(\gamma-\gamma_P)+0)=\gamma_{P,F}-\gamma_P<\gamma-\gamma_P,
\eq
and so
\beq \label{EC.25}
f_{s,F}^R(\gamma-\delta)<\delta
\eq
when $\delta$ is less than but close to $\gamma-\gamma_P>0$. By continuity, it follows from (\ref{EC.23}) and (\ref{EC.25}) that the equation $a=f_{s,F}^R(\gamma-a)$ has a solution $a$, and this solution is unique by (\ref{EC.22}). To show that this solution, $a_{s,F}(\gamma)$, satisfies $a_{s,F}(\gamma)\pr f_{s,F}^R(\gamma_P+0)$ as $\gamma\downarrow\gamma_{P,F}$, we need the following lemma.
\mbox{}
\begin{lem}~~Let $c<0<d$ and let $h:(c,d)\pr\dR$ be smooth and such that $h(a)>0$, $h'(a)<1$ for $c<a<d$ while $h(a)\pr d$ when $a\uparrow d$. Then for any $\eps$, $0<\eps<{-}c$, there is a unique solution $a(\eps)$ of the equation $a=h(a-\eps)$, and $a(\eps)\pr d$ as $\eps\downarrow0$.
\end{lem}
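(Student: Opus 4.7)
The plan is to apply the intermediate value theorem to the function $\phi_\epsilon(a) := h(a-\epsilon) - a$ defined on the interval $a \in (c+\epsilon, d+\epsilon)$, which is precisely the set of $a$ on which $h(a-\epsilon)$ makes sense. Strict monotonicity of $\phi_\epsilon$ will give uniqueness, while the convergence $a(\epsilon) \to d$ will be extracted via a contradiction argument based on the hypothesis $h'<1$.

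First I would observe that $\phi_\epsilon$ is smooth with $\phi_\epsilon'(a) = h'(a-\epsilon) - 1 < 0$ throughout its domain, which immediately gives uniqueness of any root. For existence, I would test $\phi_\epsilon$ at the convenient point $a = 0$, which lies in $(c+\epsilon, d+\epsilon)$ since $\epsilon < -c$ ensures $c+\epsilon < 0 < d+\epsilon$; here $\phi_\epsilon(0) = h(-\epsilon) > 0$ by positivity of $h$. At the right endpoint, the hypothesis $h(a) \to d$ as $a \uparrow d$ yields $\phi_\epsilon(a) \to d - (d+\epsilon) = -\epsilon < 0$. Continuity combined with the intermediate value theorem then produces a unique $a(\epsilon) \in (0, d+\epsilon)$ with $a(\epsilon) = h(a(\epsilon)-\epsilon)$.

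For the convergence as $\epsilon \downarrow 0$, I would first record the auxiliary monotonicity fact that $\psi(a) := h(a) - a$ satisfies $\psi'(a) = h'(a) - 1 < 0$ on $(c,d)$ and $\psi(a) \to 0$ as $a \uparrow d$, hence $\psi(a) > 0$ for every $a \in (c,d)$, i.e.\ $h(a) > a$ throughout $(c,d)$. Next, suppose for contradiction that along some sequence $\epsilon_n \downarrow 0$ we have $a(\epsilon_n) \to b$ with $b < d$. Since any solution satisfies $a(\epsilon) = h(a(\epsilon)-\epsilon) > 0 > c$, the limit $b$ lies in $[0,d) \subset (c,d)$, so $h$ is continuous at $b$. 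Passing to the limit in $a(\epsilon_n) = h(a(\epsilon_n)-\epsilon_n)$ yields $b = h(b)$, contradicting $h(b) > b$. Finally, since $a(\epsilon)$ is bounded (it lies in $(0, d+\epsilon)$ for all small $\epsilon$), every sequence $\epsilon_n \downarrow 0$ has a convergent subsequence, and the argument above forces every subsequential limit to equal $d$, which proves $a(\epsilon) \to d$.

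I do not anticipate any genuine obstacle; the only point requiring mild attention is verifying that the candidate limit $b$ falls in the open interval $(c,d)$ so that $h$ can be evaluated there, and this is handled by the observation $a(\epsilon) > 0$. The lemma is essentially a packaging of the intermediate value theorem together with the contraction-like inequality $h'<1$.
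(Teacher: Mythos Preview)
Your proof is correct and follows essentially the same approach as the paper's: existence via the intermediate value theorem at $a=0$ and $a\uparrow d+\eps$, uniqueness from $h'<1$, the auxiliary fact $h(a)>a$ on $(c,d)$, and a subsequential contradiction argument for the limit $a(\eps)\to d$. Your derivation of $h(a)>a$ from strict decreasingness of $\psi(a)=h(a)-a$ together with $\psi(a)\to 0$ as $a\uparrow d$ is in fact slightly cleaner than the paper's mean-value-type contradiction, but the overall architecture is the same.
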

\noindent{\bf Proof.}~~Let $\eps\in(0,{-}c)$. Now
\beq \label{EC.26}
h(0-\eps)-0=h({-}\eps)>0~;~~~~~~h(a-\eps)-a\pr{-}\eps<0\,,~~a\uparrow d+\eps.
\eq
Hence, by continuity, there is an $a\in(0,d+\eps)$ such that $a=h(a-\eps)$, and this $a$ is unique by the assumption that $h'(b)<1$ for $b\in(c,d)$. To proceed, we first show that $h(a)>a$ when $c<a<d$. Indeed, when $h(a)<a$ for some $a\in(c,d)$, we would have
\beq \label{EC.27}
h(a_1)-h(a)=h(a_1)-a_1-(h(a)-a)+a-a_1>a_1-a
\eq
when $a_1$ is sufficiently close to $d$, contradicting $h'(b)<1$ for all $b\in(c,d)$. Therefore, $h(a)\geq a$ for $a\in(c,d)$, and $h(a)=a$ for some $a\in(c,d)$ cannot occur either. For otherwise, we would have for $a_1>a$ that
\beq \label {EC.28}
h(a_1)-h(a)\geq a_1-a,
\eq
contradicting $h'(b)<1$ for all $b\in(c,d)$.

We now show that $a(\eps)\pr d$ as $\eps\downarrow0$. We have from $a(\eps)<d+\eps$, see (\ref{EC.26}), that $\lim{\rm sup}_{\eps\downarrow0}\,a(\eps)\leq d$. Now suppose that $b:=\lim{\rm inf}_{\eps\downarrow0}\,a(\eps)<d$. Take $\eps_n>0$, $\eps_n\pr0$ such that $a(\eps_n)\pr b<d$. Then
\beq \label{EC.29}
0=h(a(\eps_n)-\eps_n)-a(\eps_n)\pr h(b)-b,
\eq
contradicting $h(b)>b$ for all $b\in(c,d)$. This completes the proof of the lemma. \\
\mbox{}

Taking in the lemma
\beq \label{EC.30}
h(a)=f_{s,F}^R(\gamma_{P,F}-a),~~~~~~c:={-}\sqrt{s}+\gamma_{P,F}<a<f_{s,F}^R(\gamma_P+0)=:d,
\eq
it follows that $a_{s,F}(\gamma)\pr f_{s,F}^R(\gamma_P+0)$ as $\gamma\downarrow\gamma_{P,F}$. Finally, $a_{s,F}(\gamma)\pr0$ as $\gamma\uparrow\sqrt{s}$ can be shown by using the same argument as in \cite{ajl}, Theorem~8 in Section~4.8 for showing that $a(\gamma)\pr0$ as $\gamma\uparrow\sqrt{s}$. This completes the proof.

\section{Proof of Theorem~\ref{thmm3}}\label{aB}
\mbox{} \\[-9mm]

In this appendix we present the proof of Theorem~\ref{thmm3} on the function $L_s$ in (\ref{7.10}), given in terms of the function $a_s(\gamma)$ that solves Cohen's equation $a=f_s(\gamma-a)$. The proofs rely heavily on (extensions of) the results in \cite{ajl}. In particular, we use
\beq \label{EC.31}
\gamma\,a_s(\gamma)=1-\frac{2\gamma}{\sqrt{s}}-\Bigl(1-\frac2s\Bigr)\gamma^2+4\Bigl(1-\frac1s\Bigr)\,\frac{\gamma^3} {\sqrt{s}}+O(\gamma^4),~~~~~\gamma\downarrow0,
\eq
which is a sharpening of \cite[Theorem~3]{ajl}. This sharpening can be obtained by the method to prove \cite[Theorem~3]{ajl} where, as an intermediate step, \cite[Proposition~2]{ajl} should be sharpened to
\beq \label{EC.32}
f_s(\delta)={-}\delta-\frac{1}{\delta}-\frac{2}{\delta^2\sqrt{s}}+\Bigl(2-\frac6s\Bigr)\,\frac{1}{\delta^3}+ \Bigl(16-\frac{24}{s}\Bigr)\,\frac{1}{\delta^4\sqrt{s}}+O\Bigl(\frac{1}{\delta^5}\Bigr),~~~~~\delta\pr{-}\infty,
\eq
using the methods of \cite[Section~4.1]{ajl}. We shall also use and sharpen \cite[Proposition~1]{ajl},
\beq \label{EC.33}
1-\frac{2}{\sqrt{s}}\,\gamma-\gamma^2<\gamma\,a_s(\gamma)<1-\frac{1}{\sqrt{s}}\,\gamma,~~~~~~0<\gamma<\sqrt{s}.
\eq
\mbox{} \\
{\bf Proof of $0<L_s(\gamma)<\sqrt{s}-\gamma$, $0<\gamma<\sqrt{s}$.}~~This follows from the definition in (\ref{7.10}) and $a_s(\gamma)>0$. \\ \\
{\bf Proof of $L_s(\gamma)=\gamma s(1+O(\gamma\sqrt{s}))$, $\gamma\downarrow0$.}~~This follows from the definition in (\ref{7.10}) and (\ref{EC.33}). \\ \\
{\bf Proof of $L_s(\gamma)=(\sqrt{s}-\gamma)(1+O(s^{-1/2}\,e^s(1-\gamma/\sqrt{s})^s))$, $\gamma\uparrow\sqrt{s}$.}~~This follows from the proof of \cite[Theorem~4]{ajl}, and Stirling's formula. \\
\mbox{}

We next show the results on unimodality.

\begin{prop}\label{propEC21}
We have for $0<\gamma<\sqrt{s}$
\beq \label{EC.34}
L_s'(\gamma)=0\Leftrightarrow \gamma\,a_s(\gamma)=\tfrac12\,(1-\gamma/\sqrt{s}).
\eq
\end{prop}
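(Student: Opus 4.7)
\textbf{Plan for proving Proposition \ref{propEC21}.}

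The plan is to reduce the question to an implicit differentiation of Cohen's equation and then use a clean formula for $f_s'$ derived from the Erlang B derivative identity. Writing $u=\sqrt{s}-\gamma$, $a=a_s(\gamma)$, and $v=u+a$, I first apply the quotient rule to $L_s=u^2/v$. Since $u'=-1$ and $v'=-1+a'$, a direct computation gives
\beq
L_s'(\gamma)=\frac{-u\,(u+2a+u\,a'(\gamma))}{v^2},
\eq
so for $\gamma\in(0,\sqrt{s})$ (where $u>0$) the vanishing of $L_s'$ is equivalent to $u+2a+u\,a'(\gamma)=0$.

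Next, I differentiate $a=f_s(\gamma-a)$ implicitly to obtain $a'(\gamma)=f'/(1+f')$ with $f':=f_s'(\gamma-a)$; clearing denominators the vanishing condition becomes
\beq
u+2a+2v\,f'=0.
\eq
The heart of the argument is to derive a workable expression for $f_s'$. Using the standard identity $\partial_\lambda B(s,\lambda)=B(s,\lambda)(s/\lambda-1+B(s,\lambda))$ together with $f_s(\gamma)=(\sqrt{s}-\gamma)\,B(s,s-\gamma\sqrt{s})$ and $\lambda=s-\gamma\sqrt{s}$, a direct computation yields
\beq
f_s'(\gamma)=-g_s(\gamma)\Bigl(\tfrac{1}{\sqrt{s}}+\gamma+f_s(\gamma)\Bigr).
\eq
This is the main obstacle, both because it requires the Erlang~B derivative formula and because one must group the $1+\gamma\sqrt{s}+\sqrt{s}f_s$ terms carefully to obtain this compact form; I expect most of the work to sit here.

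Finally, I apply this formula at $\delta=\gamma-a$, where $f_s(\delta)=a$ and hence the bracket collapses to $1/\sqrt{s}+\gamma$, while $g_s(\delta)=f_s(\delta)/(1-\delta/\sqrt{s})=a\sqrt{s}/v$ by definition of $v$. This gives the remarkably simple expression
\beq
f'=f_s'(\gamma-a)=-\frac{a}{v}\,(1+\gamma\sqrt{s}).
\eq
Substituting into $u+2a+2vf'=0$ makes the $v$'s cancel and yields $u+2a-2a(1+\gamma\sqrt{s})=0$, i.e., $u=2a\gamma\sqrt{s}$. Dividing by $\sqrt{s}$ gives precisely
\beq
\gamma\,a_s(\gamma)=\tfrac12\Bigl(1-\frac{\gamma}{\sqrt{s}}\Bigr),
\eq
which is \eqref{EC.34}. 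Conversely, reading the chain of equivalences backward shows that this relation forces $L_s'(\gamma)=0$, completing the equivalence.
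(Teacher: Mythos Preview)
Your argument is correct and uses the same ingredients as the paper: the quotient rule on $L_s$, implicit differentiation of Cohen's equation, and the identity $f_s'(\gamma)=-g_s(\gamma)\bigl(\tfrac{1}{\sqrt{s}}+\gamma+f_s(\gamma)\bigr)$, which the paper quotes from \cite[Subsection~4.3]{ajl}. The only organizational difference is that the paper first computes $a_s'(\gamma)$ explicitly as in \eqref{EC.36}, substitutes it into the vanishing condition, clears denominators, and obtains a \emph{quadratic} in $\gamma a_s(\gamma)$ (equation \eqref{EC.37}) whose spurious negative root must then be discarded via \eqref{EC.38}; by contrast, you keep $a'=f'/(1+f')$, multiply through by $1+f'$, and evaluate $f'=f_s'(\gamma-a)=-\tfrac{a}{v}(1+\gamma\sqrt{s})$ directly, which makes the $v$'s cancel and yields the linear relation $u=2a\gamma\sqrt{s}$ in one stroke. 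Your route is slightly tidier since it never produces the extraneous root.

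One small point to make the chain of equivalences airtight: when you ``clear denominators'' by multiplying through by $1+f'$, you are implicitly using $1+f'\neq 0$. From your own formula, $1+f'=\bigl(u-a\gamma\sqrt{s}\bigr)/v=\sqrt{s}\bigl(1-\gamma/\sqrt{s}-\gamma a_s(\gamma)\bigr)/v$, and this is strictly positive for $0<\gamma<\sqrt{s}$ by the right-hand inequality in \eqref{EC.33}. Stating this explicitly would mirror the paper's remark just before \eqref{EC.37} and closes the loop on the ``reading backward'' step.
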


\noindent {\bf Proof.}
We have from the definition of $L_s$ in (\ref{7.10})
\beq \label{EC.35}
L_s'(\gamma)=0\Leftrightarrow\sqrt{s}-\gamma+2a_s(\gamma)+(\sqrt{s}-\gamma)\,a_s'(\gamma)=0.
\eq
By implicit differentiation in  \cite[(14)]{ajl} and the expression in \cite[Subsection~4.3]{ajl} for $f_s'$ in terms of $f_s$ we have
\beq \label{EC.36}
a_s'(\gamma)=\frac{-a_s(\gamma)(\gamma+1/\sqrt{s})}{1-\gamma/\sqrt{s}-\gamma\,a_s(\gamma)},~~~~~~0<\gamma<\sqrt{s}.
\eq
Using this in (\ref{EC.35}) with the facts that $\gamma>0$ and $1-\gamma/\sqrt{s}-\gamma\,a_s(\gamma)>0$, we have for $0<\gamma<\sqrt{s}$
\beq \label{EC.37}
L_s'(\gamma)=0\Leftrightarrow(\gamma\,a_s(\gamma))^2+(1-\gamma/\sqrt{s})(\gamma\sqrt{s}-\tfrac12)\,\gamma\,a_s(\gamma)-
\tfrac12\,(1-\gamma/\sqrt{s})^2\,\gamma\sqrt{s}=0.
\eq
The quadratic in $\gamma\,a_s(\gamma)$ occurring in the second proposition in (\ref{EC.37}) has the roots
\beq \label{EC.38}
\gamma\,a_s(\gamma)={-}\tfrac12\,(1-\gamma/\sqrt{s})(\gamma\sqrt{s}-\tfrac12)\pm\tfrac12\,
(1-\gamma/\sqrt{s})(\gamma\sqrt{s}+\tfrac12).
\eq
Since $\gamma\,a_s(\gamma)>0$, only the root in (\ref{EC.38}) with the $+$-sign needs to be considered. The latter root equals $\frac12\,(1-\gamma/\sqrt{s})$, and this completes the proof.\\

To show unimodality of $L_s$, we should consider the function $\gamma\,a_s(\gamma)/(1-\gamma/\sqrt{s})$, $0<\gamma<\sqrt{s}$. This function assumes the values 1 and 0 at $\gamma=0{+}$ and $\gamma=\sqrt{s}-0$, and so, by Proposition~\ref{propEC21}, it is sufficient to show that this function is strictly decreasing in $0<\gamma<\sqrt{s}$. The result we show below is somewhat stronger and will also be used in the proof of Proposition~\ref{propEC23}.

\begin{prop}\label{propEC22}
$\gamma\,a_s(\gamma)/(1-\gamma/\sqrt{s})^2$ decreases strictly in $0<\gamma<\sqrt{s}$ when $s>1$.
\end{prop}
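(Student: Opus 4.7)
My plan is to differentiate $\Psi(\gamma):=\gamma a_s(\gamma)/(1-\gamma/\sqrt s)^2$ directly via (EC.36) and reduce the monotonicity claim to a single algebraic inequality. A short computation using (EC.36) gives
\[
(\gamma a_s)'=\frac{a_s\bigl[(1-2\gamma/\sqrt s-\gamma^2)-\gamma a_s\bigr]}{1-\gamma/\sqrt s-\gamma a_s},
\]
whose denominator is positive by (EC.33). Plugging this into the quotient rule for $\Psi$ and collecting the $a_s$-terms, the condition $\Psi'(\gamma)<0$ simplifies cleanly to
\begin{equation*}
\gamma a_s(\gamma)\,(1+\gamma/\sqrt s)>(1-\gamma/\sqrt s)(1-\gamma^2),\qquad 0<\gamma<\sqrt s, \qquad(\star)
\end{equation*}
so the whole proposition reduces to establishing $(\star)$.

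The range $\gamma\in[1,\sqrt s)$ is immediate since the right-hand side of $(\star)$ is non-positive while the left-hand side is positive. The substantive range is $\gamma\in(0,1)$. A revealing check: at $s=1$, Cohen's equation solves explicitly to give $\gamma a_1(\gamma)=(1-\gamma)^2$, and one also has $(1-\gamma)(1-\gamma^2)/(1+\gamma)=(1-\gamma)^2$, so $(\star)$ becomes a pointwise \emph{identity} on $(0,1)$ when $s=1$. This accounts for the hypothesis $s>1$ and signals that the proof must genuinely activate it.

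To leverage $s>1$, my plan is to invoke the Erlang B recursion $sB(s,\mu)=\lambda B(s-1,\mu)$ (a consequence of the standard three-term relation together with Cohen's coupling $\mu(1-B(s,\mu))=\lambda$) and set $p:=B(s-1,\mu)$. This rewrites $\Psi$ compactly as $\Psi=s(s-\lambda)p/(s-\lambda p)$ with $\lambda=s-\gamma\sqrt s$. Using Jagerman's derivative identity $\mu\,dB(s,\mu)/d\mu=B(s,\mu)(s-\lambda)$ together with implicit differentiation of $\mu(1-B(s,\mu))=\lambda$, the required monotonicity $d\Psi/d\lambda>0$ reduces after routine algebra to a quadratic inequality $E(p)>0$ in $p$, with polynomial coefficients in $s$ and $\lambda$. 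One checks directly that $E(1)=(s-1)(s-\lambda)^2>0$ for $s>1$, $\lambda<s$; since $E$ opens downward in $p$, the task is to show that the actual value $p=B(s-1,\mu)$ lies inside the positivity interval of $E$.

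The main obstacle is this root-localization step, most delicate in the regime $(s-\lambda)^2<s$ (equivalently $\gamma<1$, where $E(0)<0$), which thus requires a genuinely sharp lower bound on $B(s-1,\mu)$. I plan to handle it via the representation $p=s\,a_s(\gamma)/[\lambda(\sqrt s-\gamma+a_s(\gamma))]$, obtained by combining the recursion with the identity $B(s,\mu)=a_s/(\sqrt s-\gamma+a_s)$ that rearranges Cohen's equation, together with the sharpening (EC.31) of (EC.33) — the latter makes the $s>1$ dependence explicit at leading order and is precisely what forces the strict inequality in $(\star)$.
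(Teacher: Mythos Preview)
Your reduction to $(\star)$ is correct and matches the paper exactly: the paper's (EC.39)--(EC.41) perform the same calculation and arrive at the equivalent inequality
\[
\gamma\,a_s(\gamma)>(1-\gamma^2)\,\frac{1-\gamma/\sqrt{s}}{1+\gamma/\sqrt{s}},\qquad 0<\gamma<\sqrt{s}.
\]
Your observation that $s=1$ collapses $(\star)$ to an identity is also in the paper (the Note after the proof).

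The gap is in your plan for proving $(\star)$ on $(0,1)$. You propose to localize $p=B(s-1,\mu)$ inside the positivity interval of a downward-opening quadratic $E$, invoking ``the sharpening (EC.31) of (EC.33)''. But (EC.31) is an \emph{asymptotic expansion} as $\gamma\downarrow 0$ with an $O(\gamma^4)$ remainder, not a pointwise inequality on $(0,1)$; it cannot furnish the uniform lower bound you need. And the global bound (EC.33), namely $\gamma a_s(\gamma)>1-2\gamma/\sqrt{s}-\gamma^2$, is too weak: expanding the right-hand side of $(\star)$ via (EC.42) gives $1-2\gamma/\sqrt{s}-(1-2/s)\gamma^2+\ldots$, so (EC.33) falls short of $(\star)$ by a margin of order $\gamma^2/s$ near the origin. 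Your proposal therefore lacks a mechanism to close the inequality away from $\gamma=0$.

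The paper avoids any global sharp bound by a \emph{first-crossing} argument. It uses (EC.31) only to verify $(\star)$ for small $\gamma>0$. Then it supposes a first $\gamma_0\in(0,\sqrt{s})$ where equality holds in $(\star)$; at that point, substituting the equality into (EC.36) yields $(\gamma a_s)'=-\tfrac{2}{\sqrt{s}}(1-\gamma^2)/(1+\gamma/\sqrt{s})$, while differentiating the right-hand side of $(\star)$ directly gives a strictly smaller value precisely when $s>1$ (this is where the hypothesis enters, via (EC.46)). Hence at any crossing the left-hand side has strictly larger slope, which forces the left-hand side to lie \emph{below} the right-hand side just before $\gamma_0$, contradicting the choice of $\gamma_0$ as the first crossing. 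This derivative-comparison-at-a-crossing is the missing idea; it sidesteps entirely the need for a sharp global estimate on $a_s$ or $B(s-1,\mu)$.
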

\noindent {\bf Proof.}
 We compute
\begin{eqnarray} \label{EC.39} ((1-\gamma/\sqrt{s})^{-2}\,\gamma\,a_s(\gamma))'=(1-\gamma/\sqrt{s})^{-2}\,\Bigl[\frac{2}{\sqrt{s}}\,(1-\gamma/\sqrt{s})^{-1}\,\gamma\,a_s(\gamma)+a_s(\gamma) -\gamma\,a_s'(\gamma)\Bigr].
\end{eqnarray}
Using (\ref{EC.36}), we thus see that for $0<\gamma<\sqrt{s}$
\begin{eqnarray} \label{EC.40}
& \mbox{} & \hspace*{-8mm}((1-\gamma/\sqrt{s})^{-2}\,\gamma\,a_s(\gamma))'<0~ \nonumber \\[3.5mm]
& & \hspace*{-8mm}\Leftrightarrow~(1-\gamma\,a_s(\gamma)-\gamma/\sqrt{s})\Bigl(\frac{2\gamma}{\sqrt{s}}+1-
\gamma/\sqrt{s}\Bigr) -\gamma(\gamma+1/\sqrt{s})(1-\gamma/\sqrt{s})<0~ \nonumber \\[3.5mm]
& & \hspace*{-8mm}\Leftrightarrow~(1-\gamma/\sqrt{s})(1-\gamma^2)-\gamma\,a_s(\gamma)(1+\gamma/\sqrt{s})<0.
\end{eqnarray}
It is therefore sufficient to show that for $0<\gamma<\sqrt{s}$
\beq \label{EC.41}
\gamma\,a_s(\gamma)>(1-\gamma^2)\,\frac{1-\gamma/\sqrt{s}}{1+\gamma/\sqrt{s}}.
\eq

We compute
\beq \label{EC.42}
\frac{1-\gamma/\sqrt{s}}{1+\gamma/\sqrt{s}}\,(1-\gamma^2)=1-\frac{2\gamma}{\sqrt{s}}-\Bigl(1-\frac2s\Bigr)\,\gamma^2 +2\Bigl(1-\frac1s\Bigr)\,\frac{\gamma^3}{\sqrt{s}}+O(\gamma^4),~~~~~\gamma\downarrow0,
\eq
and so, by (\ref{EC.31}), we see that (\ref{EC.41}) holds for small positive $\gamma$. Now suppose that $\gamma$, $0<\gamma<\sqrt{s}$, is such that
\beq \label{EC.43}
\gamma\,a_s(\gamma)=(1-\gamma^2)\,\frac{1-\gamma/\sqrt{s}}{1+\gamma/\sqrt{s}}.
\eq
At such a $\gamma$ we compute, using (\ref{EC.36}) and (\ref{EC.42}) twice,
\begin{eqnarray} \label{EC.44}
\hspace*{-5mm}(\gamma\,a_s(\gamma))' & = & a_s(\gamma)-\frac{\gamma\,a_s(\gamma)(\gamma+1/\sqrt{s})}{1-\gamma/\sqrt{s}-\gamma\,a_s(\gamma)}~ \nonumber \\[3.5mm]
& = & a_s(\gamma)-\frac{1+\gamma/\sqrt{s}}{1-\gamma/\sqrt{s}}\,a_s(\gamma)={-}\frac{2}{\sqrt{s}}~\frac{1-\gamma^2} {1+\gamma/\sqrt{s}}.
\end{eqnarray}
At the same time, we compute
\beq \label{EC.45}
\Bigl((1-\gamma^2)\,\frac{1-\gamma/\sqrt{s}}{1+\gamma/\sqrt{s}}\Bigr)'={-}\frac{2}{\sqrt{s}}~ \frac{1+\gamma\sqrt{s}-\gamma^2-\gamma^3/\sqrt{s}}{(1+\gamma/\sqrt{s})^2}.
\eq
Since $s>1$, we have for $0<\gamma<\sqrt{s}$
\beq \label{EC.46}
1+\gamma\sqrt{s}-\gamma^2-\gamma^3/\sqrt{s}>(1+\gamma/\sqrt{s})(1-\gamma^2)=1+\gamma/\sqrt{s}-\gamma^2-
\gamma^3/\sqrt{s}.
\eq
Hence, at a $\gamma\in(0,\sqrt{s})$ where (\ref{EC.43}) holds, we have
\beq \label{EC.47}
(\gamma\,a_s(\gamma))'>\Bigl((1-\gamma^2)\,\frac{1-\gamma/\sqrt{s}}{1+\gamma/\sqrt{s}}\Bigr)'.
\eq
This is in particular so for
\beq \label{EC.48}
\gamma_0:={\rm inf}\,\{0<\gamma<\sqrt{s}\:|\:\mbox{(\ref{EC.43}) holds}\}.
\eq
This $\gamma_0\in(0,\sqrt{s})$ since (\ref{EC.41}) holds for small positive $\gamma$ and since we have assumed that there is a $\gamma\in(0,\sqrt{s})$ such that (\ref{EC.43}) holds. However, validity of (\ref{EC.43}) and (\ref{EC.47}) for $\gamma=\gamma_0$ implies that
\beq \label{EC.49}
\gamma\,a_s(\gamma)<(1-\gamma^2)\,\frac{1-\gamma/\sqrt{s}}{1+\gamma/\sqrt{s}}
\eq
holds for $\gamma$'s close to but less than $\gamma_0$. However, (\ref{EC.41}) holds for $\gamma$'s close to 0, and so there is, by continuity, a $\gamma_1<\gamma_0$ such that (\ref{EC.43}) holds. Contradiction, see (\ref{EC.48}). This proves that (\ref{EC.41}) holds for all $\gamma\in(0,\sqrt{s})$, and the proof is complete.

\noindent{\bf Note.}~~We have $\gamma\,a_{s=1}(\gamma)=(1-\gamma)^2$, $0<\gamma\leq1$.

We have from Proposition~\ref{propEC22} that there is a unique root $\gamma=\hat{\gamma}_s\in(0,\sqrt{s})$ of
\beq \label{EC.50}
\gamma\,a_s(\gamma)=\tfrac12\,(1-\gamma/\sqrt{s}).
\eq
From the behavior of $L_s(\gamma)$ near $\gamma=0$ and $\gamma=\sqrt{s}$, it thus follows that $L_s$ is unimodal, with unique maximum at $\gamma=\hat{\gamma}_s$. The value of $L_s(\gamma)$ at $\gamma=\hat{\gamma}_s$, see (\ref{7.15}), is easily obtained by inserting (\ref{EC.50}) into the definition of $L_s$ in (\ref{7.10}).

We proceed by showing some properties of $\hat{\gamma}_s$.
\begin{prop}\label{propEC23}For $s>1$,
\beq \label{EC.51}
\hat{\gamma}_s>f_s(0)>\Bigl(\frac12+\frac{1}{16s}\Bigr)^{1/2}-\frac{1}{4\sqrt{s}}.
\eq
\end{prop}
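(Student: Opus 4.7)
My plan is to recognize that both inequalities in the statement are equivalent to a single quadratic condition on $f_s(0)$, and then to prove this condition by bounding the Erlang-B quantity $f_s(0)=\sqrt{s}\,B(s,s)$.

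First I would observe, using Proposition~\ref{propEC22} together with positivity and strict monotonicity of $1-\gamma/\sqrt{s}$ on $(0,\sqrt{s})$, that $\gamma\mapsto \gamma a_s(\gamma)/(1-\gamma/\sqrt{s})$ is strictly decreasing on $(0,\sqrt{s})$ for $s>1$ (it is the product of two positive strictly decreasing factors). By \eqref{7.14} (equivalently \eqref{EC.50}), $\hat{\gamma}_s$ is the unique point where this ratio equals $\tfrac12$. Hence $\hat{\gamma}_s>f_s(0)$ is equivalent to
\[
\frac{f_s(0)\,a_s(f_s(0))}{1-f_s(0)/\sqrt{s}}>\frac12.
\]
Cohen's equation $a=f_s(\gamma-a)$ at $\gamma=f_s(0)$ admits $a=f_s(0)$ as a solution, so by uniqueness $a_s(f_s(0))=f_s(0)$. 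Substituting reduces the preceding inequality to $f_s(0)^2+f_s(0)/(2\sqrt{s})>\tfrac12$, and completing the square in $f_s(0)$ turns this into precisely the second displayed inequality $f_s(0)>\sqrt{\tfrac12+\tfrac{1}{16s}}-\tfrac{1}{4\sqrt{s}}$. This shows that the two inequalities in the proposition are in fact equivalent.

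It therefore remains to prove the explicit lower bound on $f_s(0)=\sqrt{s}\,B(s,s)$ for integers $s\geq 2$, or equivalently $2sB(s,s)^2+B(s,s)>1$. At $s=1$ one has equality throughout (consistent with $\hat{\gamma}_1=f_1(0)=\tfrac12$), and for $s=2,3$ direct computation of $B(s,s)=(s^s/s!)/\sum_{k=0}^s s^k/k!$ yields $2sB(s,s)^2+B(s,s)$ equal to $26/25$ and $720/676$, respectively. For general $s\geq 2$ I would invoke Jagerman's expansion \eqref{4999} at $\gamma=0$, namely $\sqrt{s}\,B(s,s)=\sqrt{2/\pi}-\tfrac{4}{3\pi\sqrt{s}}+O(s^{-1})$, and compare with the Taylor expansion $\sqrt{\tfrac12+\tfrac{1}{16s}}-\tfrac{1}{4\sqrt{s}}=\tfrac{1}{\sqrt{2}}-\tfrac{1}{4\sqrt{s}}+O(s^{-1})$. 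The leading gap $\sqrt{2/\pi}-1/\sqrt{2}\approx 0.091>0$ dominates the next-order discrepancy $\bigl(\tfrac14-\tfrac{4}{3\pi}\bigr)/\sqrt{s}\approx -0.174/\sqrt{s}$, so the required inequality holds once $s$ exceeds an explicit threshold $s_0$ computable from the effective error constant in \eqref{4999}; the finite set $2\leq s<s_0$ is dispatched by direct numerical verification.

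The main obstacle is making the $O(s^{-1})$ remainder in \eqref{4999} sufficiently explicit at $\gamma=0$ so that the threshold $s_0$ is small enough to permit a short case check, while simultaneously propagating the error through the elementary $\sqrt{\tfrac12+\tfrac{1}{16s}}$ Taylor step. A cleaner alternative, which I would attempt first, is to show by induction that $s\mapsto 2sB(s,s)^2+B(s,s)-1$ is strictly increasing on the integers $s\geq 1$ starting from the value $0$ at $s=1$, using the standard Erlang-B recursion $B(s,\lambda)^{-1}=1+(s/\lambda)B(s-1,\lambda)^{-1}$ to relate $B(s,s)$ to $B(s-1,s)$; because the diagonal choice $\lambda=s$ changes both arguments simultaneously, however, this appears to require an auxiliary monotonicity/convexity statement on $B(s,\lambda)$, and so the asymptotic-plus-verification route remains the safer fallback.
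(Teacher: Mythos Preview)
Your reduction is exactly the paper's: you correctly identify $a_s(f_s(0))=f_s(0)$, correctly deduce that both inequalities in \eqref{EC.51} are equivalent to the single quadratic condition
\[
f_s^2(0)>\tfrac12\bigl(1-f_s(0)/\sqrt{s}\bigr),
\]
and correctly use the strict decrease of $\gamma\,a_s(\gamma)/(1-\gamma/\sqrt{s})$ (Proposition~\ref{propEC22}) to translate this into $\hat\gamma_s>f_s(0)$.

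Where you diverge is in how you propose to \emph{prove} this quadratic condition: you suggest either a Jagerman-expansion-plus-finite-check argument or an Erlang-B recursion induction, neither of which you fully carry out, and both of which require nontrivial effective error control. The paper sidesteps all of this by simply evaluating the inequality \eqref{EC.41},
\[
\gamma\,a_s(\gamma)>(1-\gamma^2)\,\frac{1-\gamma/\sqrt{s}}{1+\gamma/\sqrt{s}},\qquad 0<\gamma<\sqrt{s},\ s>1,
\]
at $\gamma=f_s(0)$. Since $a_s(f_s(0))=f_s(0)$ this gives $f_s^2(0)(1+f_s(0)/\sqrt{s})>(1-f_s^2(0))(1-f_s(0)/\sqrt{s})$, and upon expansion the cubic terms $f_s^3(0)/\sqrt{s}$ cancel, leaving precisely $2f_s^2(0)+f_s(0)/\sqrt{s}>1$. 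No asymptotics, no case checks. The point you are missing is that \eqref{EC.41} was already established in the proof of Proposition~\ref{propEC22}, so it is available for free here; once you spot the cubic cancellation, the proposition is a two-line corollary.
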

\begin{proof}
It follows from $a_s(\gamma)=f_s(\gamma-a_s(\gamma))$ that $a_s(f_s(0))=f_s(0)$. Using the inequality in (\ref{EC.41}) with $\gamma=f_s(0)$, we get
\beq \label{EC.52}
f_s^2(0)=(1-f_s^2(0))\,\frac{1-f_s(0)/\sqrt{s}}{1+f_s(0)/\sqrt{s}}.
\eq
Working this out (noting that the cubic terms $f_s^3(0)$ cancel), we find
\beq \label{EC.53}
f_s^2(0)>\frac12\,\Bigl(1-\frac{f_s(0)}{\sqrt{s}}\Bigr).
\eq
That is, the value assumed by $\gamma\,a_s(\gamma)/(1-\gamma/\sqrt{s})$ at $\gamma=f_s(0)$ exceeds $1/2$. Since $\gamma\,a_s(\gamma)/(1-\gamma/\sqrt{s})$ is strictly decreasing in $0<\gamma<\sqrt{s}$, the first inequality in (\ref{EC.51}) follows. Finally, from (\ref{EC.53}) we also get
\beq \label{EC.54}
\Bigl(f_s(0)+\frac{1}{4\sqrt{s}}\Bigr)^2-\frac12-\frac{1}{16s}>0,
\eq
and this gives the second inequality in (\ref{EC.51}).
\end{proof}

\noindent{\bf Note.}~~We have $\hat{\gamma}_{s=1}=1/2$.
\begin{thm}\label{thmEC21}$\hat{\gamma}_s$ increases strictly in $s\geq1$ from $1/2$ at $s=1$ to $1.034113461...$ at $s=\infty$.
\end{thm}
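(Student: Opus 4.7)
The plan is to verify the two endpoint values by direct substitution and a limit passage, and to establish strict monotonicity via the implicit function theorem applied to the defining equation \eqref{EC.50}.

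For the endpoints, the case $s=1$ is immediate from the explicit formula $\gamma\,a_{s=1}(\gamma)=(1-\gamma)^2$ (noted after Proposition~\ref{propEC22}): substitution into \eqref{EC.50} gives $(1-\gamma)^2=\tfrac12(1-\gamma)$, whose only positive root in $(0,1)$ is $\gamma=\tfrac12$. For $s\to\infty$, Proposition~\ref{propEC23} bounds $\hat\gamma_s$ from below by a quantity tending to $\sqrt{1/2}$, while the trivial upper bound $\hat\gamma_s<\sqrt{s}$ together with $\hat\gamma_s=O(1)$ (which follows from $\gamma\,a_s(\gamma)\le1-\gamma/\sqrt{s}$ by \eqref{EC.33}) keeps $\hat\gamma_s$ in a compact subset of $(0,\infty)$. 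Theorem~\ref{thmm2} then permits passing to the limit in \eqref{EC.50}, yielding the transcendental system $\gamma\,a_\infty(\gamma)=1/2$ coupled with $a_\infty(\gamma)=\varphi(\gamma-a_\infty(\gamma))/\Phi(\gamma-a_\infty(\gamma))$, which admits a unique solution computed numerically as $1.034113461\ldots$.

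For monotonicity, I would extend $s\mapsto f_s(\gamma)$ analytically to real $s>0$ via the Gamma-function representation of the Erlang B formula, and apply the implicit function theorem to $G(\gamma,s):=\gamma\,a_s(\gamma)-\tfrac12(1-\gamma/\sqrt{s})$. Proposition~\ref{propEC22} (strict decrease of $\gamma\,a_s(\gamma)/(1-\gamma/\sqrt{s})^2$) implies that $G$ crosses strictly from positive to negative at $\hat\gamma_s$, so $G_\gamma(\hat\gamma_s,s)<0$. Explicitly, using \eqref{EC.36} at $\gamma=\hat\gamma_s$ together with \eqref{EC.50}, one obtains $\hat\gamma_s\,a_s'(\hat\gamma_s)=-(\hat\gamma_s+1/\sqrt{s})$, and hence $G_\gamma(\hat\gamma_s,s)=a_s(\hat\gamma_s)-\hat\gamma_s-1/(2\sqrt{s})$, which is negative by the estimate in Proposition~\ref{propEC23}. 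To conclude $d\hat\gamma_s/ds>0$ it remains to show $G_s(\hat\gamma_s,s)>0$, i.e.
\[
\frac{\partial a_s(\hat\gamma_s)}{\partial s}>\frac{1}{4\,s^{3/2}}.
\]

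The latter quantitative bound is the main obstacle. Implicit differentiation of Cohen's equation $a_s(\gamma)=f_s(\gamma-a_s(\gamma))$ in $s$ at fixed $\gamma$ yields
\[
\frac{\partial a_s}{\partial s}=\frac{(\partial f_s/\partial s)\bigl|_{\gamma-a_s(\gamma)}}{1+f_s'(\gamma-a_s(\gamma))},
\]
whose positivity is immediate from property~(c) at the start of Appendix~\ref{aA} combined with $1+f_s'(\delta)>0$. To get the quantitative lower bound beating $1/(4s^{3/2})$, I would exploit the analytic dependence of Erlang B on $s$, applying the expansion \eqref{EC.32} of $f_s(\delta)$ at $\delta=\hat\gamma_s-a_s(\hat\gamma_s)$; this $\delta$ stays in a bounded set since $\hat\gamma_s$ does. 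The resulting asymptotic inequality would be supplemented by a direct numerical check at the finitely many small integer values of $s$ where the asymptotic estimate is not yet tight, thereby completing the strictness claim and, jointly with the two endpoint computations, establishing Theorem~\ref{thmEC21}.
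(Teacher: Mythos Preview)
Your endpoint computations are fine, and the implicit-function-theorem setup is correct as far as it goes: your formula $\hat\gamma_s\,a_s'(\hat\gamma_s)=-(\hat\gamma_s+1/\sqrt{s})$ and the sign of $G_\gamma(\hat\gamma_s,s)$ check out. The genuine gap is precisely where you flag it: the quantitative bound $\partial a_s/\partial s>1/(4s^{3/2})$. Your plan to extract this from \eqref{EC.32} cannot work, because \eqref{EC.32} is an expansion as $\delta\to-\infty$, not an expansion in $s$ at bounded $\delta$; since you yourself note that $\delta=\hat\gamma_s-a_s(\hat\gamma_s)$ stays bounded, that formula simply does not apply. And even if you had a valid large-$s$ asymptotic with explicit remainder, a ``numerical check at finitely many small integer values of $s$'' does not close the argument for all real $s\ge1$: you would need interval-arithmetic control of $G_s$ on each gap between check points, which is a different and much heavier undertaking than you indicate. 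A smaller issue: your passage to the $s=\infty$ limit presumes both an a~priori upper bound on $\hat\gamma_s$ and uniqueness of the solution to $\gamma\,a_\infty(\gamma)=1/2$; neither is immediate, and the paper in fact devotes a separate lemma to the latter (strict decrease of $\gamma\,a_\infty(\gamma)$ from $1$ to $0$).

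The paper sidesteps the quantitative $s$-derivative entirely. Instead of differentiating in $s$, it fixes $s>t\ge1$ and compares the two functions $p_u(\gamma):=\gamma\,a_u(\gamma)/(1-\gamma/\sqrt{u})$, $u=s,t$, directly: from \eqref{EC.33} one has $p_s(\gamma)>p_t(\gamma)$ for small $\gamma>0$, and at any hypothetical first crossing $p_s(\gamma)=p_t(\gamma)=:p$ a computation using \eqref{EC.36} yields $p_s'(\gamma)-p_t'(\gamma)>0$, contradicting minimality of the crossing. Hence $p_s>p_t$ on $(0,\sqrt{t})$, and since each $p_u$ is strictly decreasing (Proposition~\ref{propEC22}) and $\hat\gamma_u$ is where $p_u=1/2$, this gives $\hat\gamma_s>\hat\gamma_t$ without any estimate on $\partial a_s/\partial s$. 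This ``no-first-crossing'' comparison is the key idea you are missing; it is both more elementary and fully rigorous for every pair $s>t\ge1$.
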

\begin{proof}
Let $s>t\geq1$. From (\ref{EC.33}) we have for $u\geq1$ that
\beq \label{EC.55}
p_u(\gamma):=\frac{\gamma\,a_u(\gamma)}{1-\gamma/\sqrt{u}}=1-\gamma/\sqrt{u}+O(\gamma^2),~~~~~~\gamma\downarrow0,
\eq
and so
\beq \label{EC.56}
\frac{\gamma\,a_s(\gamma)}{1-\gamma/\sqrt{s}}>\frac{\gamma\,a_t(\gamma)}{1-\gamma/\sqrt{t}}
\eq
holds for small positive $\gamma$. Now suppose we have a $\gamma\in(0,\sqrt{t})$ such that
\beq \label{EC.57}
p:=\frac{\gamma\,a_s(\gamma)}{1-\gamma/\sqrt{s}}=\frac{\gamma\,a_t(\gamma)}{1-\gamma/\sqrt{t}}.
\eq
We have $p\in(0,1)$. We compute, using (\ref{EC.36}) and the definition of $p_u$ in (\ref{EC.55})
\begin{align} \label{EC.58}
\hspace*{-1cm}\Bigl(\frac{a_u(\gamma)}{1-\gamma/\sqrt{u}}\Bigr)'&=\frac{-a_u(\gamma)}{1-\gamma/\sqrt{u}}~\frac{\gamma+1\sqrt{u}}{1-\gamma/\sqrt{u}-\gamma\,a_u(\gamma)} +\frac{1}{\sqrt{u}}~\frac{a_u(\gamma)}{(1-\gamma/\sqrt{u})^2} \nonumber \\[3.5mm]
&  \hspace*{-1cm}=~{-}p_u(\gamma)\,\frac{\gamma+1/\sqrt{u}}{\gamma(1-\gamma/\sqrt{u})(1-p_u(\gamma))}+ \frac{p_u(\gamma)}{\sqrt{u}}~\frac{1}{\gamma(1-\gamma/\sqrt{u})}.
\end{align}
Hence, at a $\gamma$ where (\ref{EC.57}) holds, we have
\beq \label{EC.59}
\Bigl(\frac{a_u(\gamma)}{1-\gamma/\sqrt{u}}\Bigr)'=\frac{-p}{\gamma(1-p)}~\frac{\gamma+1/\sqrt{u}}{1-\gamma/\sqrt{u}} +\frac{p}{\gamma\sqrt{u}}~\frac{1}{1-\gamma/\sqrt{u}},~~~~~~u=s,t.
\eq
Therefore, at such a $\gamma$ we have
\begin{eqnarray} \label{EC.60}
& \mbox{} & \Bigl(\frac{a_s(\gamma)}{1-\gamma/\sqrt{s}}\Bigr)'-\Bigl(\frac{a_t(\gamma)}{1-\gamma/\sqrt{t}}\Bigr)' \nonumber \\[3.5mm]
& & =~\frac{-p}{\gamma(1-p)}\,\Bigl(\frac{\gamma+1/\sqrt{s}}{1-\gamma/\sqrt{s}}-\frac{\gamma+1/\sqrt{t}} {1-\gamma/\sqrt{t}}\Bigr)+\frac{p}{\gamma}\,\Bigl(\frac{1}{\sqrt{s}-\gamma}-\frac{1}{\sqrt{t}-\gamma}\Bigr) \nonumber \\[3.5mm]
& & =~\frac{p(1/\sqrt{s}-1/\sqrt{t})}{\gamma(1-\gamma/\sqrt{s})(1-\gamma/\sqrt{t})}\,\Bigl({-}\,\frac{1+\gamma^2}
{1-p}+1\Bigr)>0
\end{eqnarray}
since $s>t$, $p\in(0,1)$ and $\gamma\in(0,\sqrt{t})$. Letting $\gamma_0={\rm inf}\,\{0<\gamma<\sqrt{t}\:|\:\mbox{(\ref{EC.57}) holds}\}\in(0,\sqrt{t})$, we arrive at a contradiction in the same way as in the proof of (\ref{EC.41}). Hence, (\ref{EC.56}) holds for all $\gamma\in(0,\sqrt{t})$. Then from decreasingness of $\gamma\,a_s(\gamma)/(1-\gamma/\sqrt{s})$ as a function of $\gamma$, it is then seen that $\hat{\gamma}_s>\hat{\gamma}_t$.

The limit value of $\hat{\gamma}_s$ as $s\pr\infty$ follows on considering the equation $\gamma\,a_{\infty}(\gamma)=1/2$ in which $a_{\infty}(\gamma)$ is the unique solution of the Cohen equation in (\ref{6.4}). We still need a lemma.
\begin{lem}\label{lemfinal}$\gamma\,a_{\infty}(\gamma)$ strictly decreases in $0<\gamma<\infty$ from $1$ to $0$.
\end{lem}
\noindent{\it Proof.}~~Using
\beq \label{EC.61}
a_{\infty}'(\gamma)=\frac{-\gamma\,a_{\infty}(\gamma)}{1-\gamma\,a_{\infty}(\gamma)},
\eq
compare (\ref{EC.36}), it follows for $\gamma>0$ that
\beq \label{EC.62}
(\gamma\,a_{\infty}(\gamma))'<0\Leftrightarrow\gamma\,a_{\infty}(\gamma)>1-\gamma^2.
\eq
From \cite[(20)]{ajl}, it is seen that $\gamma\,a_{\infty}(\gamma)>1-\gamma^2$ holds for small positive $\gamma$. At a $\gamma>0$ where $\gamma\,a_{\infty}(\gamma)=1-\gamma^2$, we compute $(\gamma\,a_{\infty}(\gamma))'=0>{-}2\gamma=(1-\gamma^2)'$ and so, by the method of the proofs of (\ref{EC.41}) and (\ref{EC.56}), such a $\gamma$ does not exist. Hence, $\gamma\,a_{\infty}(\gamma)$ is strictly decreasing in $\gamma>0$. Finally, $\gamma\,a_{\infty}(\gamma)\pr1$ as $\gamma\downarrow0$ follows from \cite{ajl}, Theorem~\ref{thmm1}4, and $\gamma\,a_{\infty}(\gamma)\pr0$ as $\gamma\pr\infty$ follows from
\beq \label{EC.63}
\gamma\,a_{\infty}(\gamma)=\gamma\,f_{\infty}(\gamma-a_{\infty}(\gamma))<\gamma\,f_{\infty}(\gamma-a_{\infty}(1))\pr0 ,~~~~~~\gamma\pr\infty.
\eq
This completes the proof of the lemma. \\
\mbox{}

We conclude from Lemma~\ref{lemfinal} that $\gamma\,a_{\infty}(\gamma)=1/2$ has a unique solution $\hat{\gamma}_{\infty}$, which can be determined numerically by a two-stage Newton method, using conveniently
\beq \label{EC.64}
f_{\infty}'(\delta)={-}f_{\infty}(\delta)(\delta+f_{\infty}(\delta))~~\mbox{and~~(\ref{EC.61})}
\eq
for solving $a=f_{\infty}(\gamma-a)$ for $a=a_{\infty}(\gamma)$ and $\gamma\,a_{\infty}(\gamma)=1/2$ for $\gamma$, respectively.

The value of $\hat{\gamma}_s$ for $s=1$ follows from the Note before Theorem~\ref{thmEC21}.
\end{proof}
\end{document}